\numberwithin{equation}{section}
\theoremstyle{plain}
\newtheorem{thm}{Theorem}[section]
\newtheorem{lem}[thm]{Lemma}
\newtheorem{prop}[thm]{Proposition}
\newtheorem{letterthm}{Theorem}
\newtheorem{lettercor}[letterthm]{Corollary}
\theoremstyle{definition}
\newtheorem{defn}[thm]{Definition}
\newtheorem{facts}[thm]{Facts}
\newtheorem*{defn*}{Definition}
\newtheorem{rem}[thm]{Remark}
\newtheorem{claim}[thm]{Claim}
\newcommand{\N}{\mathbb{N}}
\newcommand{\R}{\mathbb{R}}
\newcommand{\C}{\mathbb{C}}
\newcommand{\Z}{\mathbb{Z}}
\newcommand{\Tr}{\operatorname{Tr}}
\newcommand{\Stab}{\operatorname{Stab}}
\newcommand{\ovt}{\mathbin{\overline{\otimes}}}
\newcommand{\Aut}{\operatorname{Aut}}
\newcommand{\Ad}{\operatorname{Ad}}
\newcommand{\id}{\operatorname{id}}
\newcommand{\GL}{\operatorname{GL}}
\newcommand{\SL}{\operatorname{SL}}
\newcommand{\Prob}{\operatorname{Prob}}
\newcommand{\Sub}{\operatorname{Sub}}
\newcommand{\rk}{\operatorname{rk}}
\newcommand{\dpr}{^{\prime\prime}}
\newcommand{\Haar}{\operatorname{Haar}}
\newcommand{\rW}{\operatorname{W}}
\newcommand{\rM}{\operatorname{M}}
\newcommand{\rE}{\operatorname{ E}}
\newcommand{\rC}{\operatorname{C}}
\newcommand{\rL}{\operatorname{L}}
\newcommand{\dom}{\operatorname{dom}}
\newcommand{\Lie}{\operatorname{Lie}}
\newcommand{\Gr}{\operatorname{Gr}}
\newcommand{\CL}{\operatorname{CL}}
\begin{document}

\title[Strong primeness for equivalence relations]{Strong primeness for equivalence relations arising from Zariski dense subgroups}

\begin{abstract}
We show that orbit equivalence relations arising from essentially free ergodic probability measure preserving actions of Zariski dense discrete subgroups of simple algebraic groups are strongly prime. As a consequence, we prove the existence and the uniqueness of a prime factorization for orbit equivalence relations arising from direct products of higher rank lattices. This extends and strengthens Zimmer's primeness result for equivalence relations arising from actions of lattices in simple Lie groups. The proof of our main result relies on a combination of ergodic theory of algebraic group actions and Popa's intertwining theory for equivalence relations.
\end{abstract}

\author{Daniel Drimbe}
\address{Department of Mathematics \\ University of Iowa \\
14 MacLean Hall \\ Iowa City 52242 \\ USA}
\email{daniel-drimbe@uiowa.edu}
\thanks{DD was supported by Engineering and Physical Sciences Research Council grant EP/X026647/1}

\author{Cyril Houdayer}
\address{\'Ecole normale sup\'erieure \\ D\'epartement de math\'ematiques et applications \\ Universit\'e Paris-Saclay \\ 45 rue d'Ulm \\ 75230 Paris Cedex 05 \\ France}
\email{cyril.houdayer@ens.psl.eu}
\thanks{CH is supported by Institut Universitaire de France and ERC Advanced Grant NET 101141693}

\subjclass[2020]{37A20, 20G25, 46L10}
\keywords{Algebraic groups; Equivalence relations; Higher rank lattices; Strong primeness; von Neumann algebras; Zariski dense subgroups}

\maketitle

\section{Introduction and statement of the main results}

Throughout, by an equivalence relation $\mathscr R$, we simply mean a probability measure preserving (pmp) countable Borel equivalence relation defined on a standard probability space $(X, \nu)$.

\subsection*{Introduction}

We say that a type ${\rm II_1}$ ergodic equivalence relation $\mathscr R$ is {\em prime} when for any ergodic equivalence relations $\mathscr T_1, \mathscr T_2$ for which $\mathscr R \cong \mathscr T_1 \times \mathscr T_2$, there exists $i \in \{1, 2\}$ such that $\mathscr T_i$ has finite orbits almost everywhere. Zimmer's seminal work \cite{Zi81} shows that for any lattice $\Gamma < G$ in a noncompact simple connected real Lie group with finite center and any essentially free ergodic pmp action $\Gamma \curvearrowright (X, \nu)$, the orbit equivalence relation $\mathscr R(\Gamma \curvearrowright X)$ is prime. 

In order to motivate and state our main results, we introduce the following terminology using the framework of Popa's intertwining theory for equivalence relations (see Section \ref{preliminaries} for further details). We say that a type ${\rm II_1}$ ergodic equivalence relation $\mathscr R$ is {\em strongly prime} when for any ergodic equivalence relations $\mathscr S$, $\mathscr T_1$, $\mathscr T_2$ for which $\mathscr R \times \mathscr S \cong \mathscr T_1 \times \mathscr T_2$, there exists $i \in \{1, 2\}$ such that as subequivalence relations, $\mathscr T_i$ embeds into $\mathscr S$ inside $\mathscr R \times \mathscr S$, that we write $\mathscr T_i \preceq_{\mathscr R \times \mathscr S} \mathscr S$ (see Definition \ref{defn-strong-primeness} for a precise statement). Note that this implies in particular that there exists an ergodic equivalence relation $\mathscr U$ such that $\mathscr T_i \times \mathscr U$ and $\mathscr S$ are stably isomorphic (see \cite{Sp21}). We point out that the notion of strong primeness for type ${\rm II_1}$ factors was studied by Isono in \cite{Is16}.

The ergodic theory of equivalence relations is intimately connected to the theory of von Neumann algebras. By a well-known construction due to Feldman--Moore \cite{FM75}, to any ergodic equivalence relation $\mathscr R$ on a standard probability space $(X, \nu)$, one can associate a von Neumann factor $\rL(\mathscr R)$ so that $\rL^\infty(X, \nu) \subset \rL(\mathscr R)$ sits as a Cartan subalgebra. Recall that a type ${\rm II_1}$ factor $M$ is {\em prime} when for any factors $M_1, M_2$ for which $M \cong M_1 \ovt M_2$, there exists $i \in \{1, 2\}$ such that $M_i$ is finite dimensional. For any ergodic equivalence relation $\mathscr R$, if $\rL(\mathscr R)$ is prime, then $\mathscr R$ is prime, but the converse need not hold in general (see Proposition \ref{prop-counterexample} below). In relation with Zimmer's primeness result \cite{Zi81}, in the case of simple connected real Lie groups with finite center and real rank one, any discrete subgroup $\Gamma < G$ is biexact in the sense of \cite{BO08}. Then by Ozawa's relative solidity result \cite{Oz04}, it follows that for any essentially free ergodic pmp action $\Gamma \curvearrowright (X, \nu)$ of a countable discrete nonamenable biexact group, the group measure space factor $\rL(\Gamma \curvearrowright X)$ is prime. For other iconic primeness results in von Neumann algebra theory, we refer to \cite{Ge96, Oz03, Pe06}.

Over the last two decades, Popa's deformation/rigidity theory (see the ICM surveys \cite{Po06, Va10, Io18}) has led to a plethora of primeness and indecomposability results for von Neumann algebras and orbit equivalence relations arising from essentially free ergodic pmp actions of {\em negatively curved groups} such as free groups, hyperbolic groups, biexact groups, groups with a positive first $\ell^2$-betti number and so on (see also \cite{Ad92, Ki05}). Except for very few results (see e.g.\! \cite{DHI16, BIP18, IM19}), higher rank lattices remain allergic to deformation/rigidity theory methods.

The key novelty of our paper is to combine methods from ergodic theory of algebraic group actions and Popa's intertwining theory to obtain strong primeness and existence and uniqueness of prime factorization for orbit equivalence relations arising from essentially free ergodic pmp actions of higher rank lattices and more generally Zariski dense discrete subgroups of simple algebraic groups.

\subsection*{Statement of the main results}

Our first main theorem is the following strong primeness result strengthening Zimmer's result \cite[Theorem 1.1]{Zi81}.

\begin{letterthm}\label{main-theorem}
Let $k$ be a local field of characteristic zero, $\mathbf G$ a $k$-isotropic almost $k$-simple algebraic $k$-group and $\Gamma < \mathbf G(k)$ a Zariski dense discrete subgroup. Let $\Gamma \curvearrowright (X, \nu)$ be an essentially free ergodic pmp action. 

Then the orbit equivalence relation $\mathscr R(\Gamma \curvearrowright X)$ is strongly prime.
\end{letterthm}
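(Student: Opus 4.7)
The plan is to translate the strong primeness assertion into a question about Feldman--Moore von Neumann algebras and then to combine Popa's intertwining-by-bimodules theory with a cocycle rigidity argument that exploits the Zariski density of $\Gamma$ in the almost $k$-simple group $\mathbf G(k)$. First, I would realize $\mathscr R$ on $(X,\nu)$ via $\Gamma \curvearrowright X$ and $\mathscr S$ on some $(Y,\eta)$, so that $\mathscr R \times \mathscr S$ lives on $X \times Y$ with associated factor $M = \rL(\mathscr R) \ovt \rL(\mathscr S)$ and common Cartan $A = \rL^\infty(X \times Y)$. The hypothesis $\mathscr R \times \mathscr S \cong \mathscr T_1 \times \mathscr T_2$ then supplies a second tensor factorization $M = \rL(\mathscr T_1) \ovt \rL(\mathscr T_2)$ sharing the same Cartan $A$, and the target conclusion $\mathscr T_i \preceq_{\mathscr R \times \mathscr S} \mathscr S$ is, via Feldman--Moore, precisely an intertwining of Cartan-containing subalgebras in the sense of Popa; the whole problem reduces to triggering that intertwining criterion for one of the two inclusions $\rL(\mathscr T_i) \subset M$.

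Next, I would use the two commuting product decompositions to extract a measurable 1-cocycle for the $\Gamma$-action on the first coordinate. For $g \in \Gamma$, the translation preserves both product structures, and projecting onto the $\mathscr T_2$-factor of the second decomposition yields a cocycle $c : \Gamma \times (X \times Y) \to \Aut(\mathscr T_2)$, taken modulo inner perturbations in an appropriate Polish sense, which measures exactly the failure of the two product structures to be aligned along $\Gamma$-orbits. Showing that $c$ or its $\mathscr T_1$-counterpart is essentially trivial is, through the Feldman--Moore dictionary, exactly the intertwining conclusion we want.

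The heart of the argument is to rigidify $c$ using the algebraic structure of $\Gamma$. Since $\Gamma$ is Zariski dense in the $k$-isotropic almost $k$-simple group $\mathbf G(k)$, the algebraic hulls of measurable cocycles attached to ergodic $\Gamma$-actions are severely restricted. Here I would invoke boundary-theoretic tools in the spirit of Zimmer \cite{Zi81}: build an equivariant measurable map from a $\Gamma$-boundary into a suitable $\mathbf G(k)$-variety, and then use almost $k$-simplicity to force a dichotomy on the algebraic hull of $c$. In the ``small hull'' case, one of the $\mathscr T_i$ is captured by $\mathscr S$ after an inner perturbation, whereas the ``full hull'' case has to be ruled out using essential freeness of $\Gamma \curvearrowright X$ together with the commutativity of $A$.

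The main obstacle, in my view, is this rigidification step. Zimmer's cocycle superrigidity delivers the desired dichotomy in one stroke when $\Gamma$ is a lattice in $\mathbf G(k)$, but here $\Gamma$ is only Zariski dense and possibly of infinite covolume, so one cannot directly invoke the Margulis--Zimmer arithmeticity or integrability inputs. The announced combination of \emph{ergodic theory of algebraic group actions} with Popa's intertwining theory must therefore supply a replacement that functions under Zariski density alone; the delicate technical point will be to control the algebraic hull of $c$ precisely enough that Popa's criterion for equivalence relations is actually triggered and the conclusion $\mathscr T_i \preceq_{\mathscr R \times \mathscr S} \mathscr S$ can be extracted from it, as opposed to a weaker statement of intertwining up to an additional unbounded error.
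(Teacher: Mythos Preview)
Your proposal has the right high-level shape (use the product hypothesis to produce a cocycle, rigidify it with algebraic-group methods, read off an intertwining), but the concrete cocycle you try to build is not the one that drives the argument, and this is a genuine gap.

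You want to produce $c : \Gamma \times (X \times Y) \to \Aut(\mathscr T_2)$ by ``projecting onto the $\mathscr T_2$-factor''. But an element $\gamma \in \Gamma$ only gives you an element of the full group $[\mathscr R \times \mathscr S] = [\mathscr T_1 \times \mathscr T_2]$, and there is no natural way to split such a full-group element along the tensor decomposition $\mathscr T_1 \times \mathscr T_2$; the Cartan is shared, the factors are not. Even if you force some choice, the target $\Aut(\mathscr T_2)$ is a huge Polish group with no algebraic structure, so the phrase ``algebraic hull of $c$'' has no meaning, and none of the $k$-simplicity dichotomies you want to invoke are available. The cocycle that actually works is much more naive: the orbit cocycle $\beta : \mathscr R \to \Gamma$ composed with the projection $\mathscr R \times \mathscr S \to \mathscr R$, giving $\alpha : \mathscr T_1 \times \mathscr T_2 \to \Gamma \subset G = \mathbf G(k)$ with $\ker \alpha = \id_X \times \mathscr S$. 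This lands in an honest algebraic group, and the goal ``$\mathscr T_i \preceq \mathscr S$'' becomes exactly ``$\alpha|_{\mathscr T_i}$ is close to the kernel''.

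The second missing idea is how to get any boundary map at all. You cannot map an arbitrary type ${\rm II_1}$ equivalence relation equivariantly into $G/P$: you need amenability. The paper first passes to amenable ergodic subrelations $\mathscr U_i \leq \mathscr T_i$ (these always exist), applies a Furstenberg-type dichotomy (Theorem~\ref{thm-non-trivial-gate}) to $\alpha|_{\mathscr U_1 \times \mathscr U_2}$ to get either an equivariant map into $G/H$ for a proper algebraic $H$, or a map into $G/L$ with $L$ compact. The compact alternative is killed precisely by the contradiction hypothesis $\mathscr T_i \npreceq \mathscr S$: on a set where the map lands in a compact piece, $\alpha$ takes only finitely many values (discreteness of $\Gamma$), forcing $\mathscr U_i \preceq \mathscr S$ and hence $\mathscr T_i \preceq \mathscr S$ by an ergodicity argument. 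This is where Popa's intertwining criterion is actually used --- as a finite-index/absorption statement, not as a bimodule argument inside $\rL(\mathscr R)$.

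Finally, the upgrade from $\mathscr U_1 \times \mathscr U_2$ to $\mathscr T_1 \times \mathscr T_2$ is not cocycle superrigidity (which, as you correctly note, is unavailable for mere Zariski dense $\Gamma$). It is Zimmer's ``minimal pair'' technique (Theorem~\ref{thm-product-algebraic}): one minimizes the dimension of the algebraic stabilizer over all equivariant maps $Z \to G/H_u$ fibered over one factor, shows the minimal $\mathbf H^0$ is conjugacy-constant, and then observes that the resulting map into $G/\mathscr N_{\mathbf G}(\mathbf H^0)$ is automatically equivariant for the \emph{full} $\mathscr T_i$-direction, not just $\mathscr U_i$. Running this twice gives a $(\mathscr T_1 \times \mathscr T_2,\alpha)$-equivariant map $Z \to G/L$ with $\mathbf L$ proper; pushing forward $\zeta$ yields a $\Gamma$-invariant probability on $G/L$, and Zariski density plus almost $k$-simplicity then forces $\Gamma$ to be relatively compact, the desired contradiction.
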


By Borel's density theorem \cite{Bo60}, Theorem \ref{main-theorem} applies to all essentially free ergodic pmp actions of lattices $\Gamma < \mathbf G(k)$, where $\mathbf G$ is a $k$-isotropic almost $k$-simple algebraic $k$-group (e.g.\! $\SL_n(\Z) < \SL_n(\R)$ for every $n \geq 2$). 

In that respect, denote by $\mathfrak Z$ the class of orbit equivalence relations $\mathscr R = \mathscr R(\Gamma \curvearrowright X)$ that arise from essentially free ergodic pmp actions $\Gamma \curvearrowright (X, \nu)$, where $\Gamma < \mathbf G(k)$ is a Zariski dense discrete subgroup, $k$ is a local field of characteristic zero and $\mathbf G$ is a $k$-isotropic almost $k$-simple algebraic $k$-group. 

Using Theorem \ref{main-theorem}, we deduce the following {\em unique prime factorization} result for direct products of ergodic equivalence relations that belong to $\mathfrak Z$.

\begin{lettercor}\label{main-cor}
Let $n \geq 2$ and $\mathscr R_1, \dots, \mathscr R_n$ be ergodic equivalence relations that belong to $\mathfrak Z$. Then the following assertions hold:

\begin{itemize}

\item [$(\rm i)$] If $\mathscr R = \mathscr R_1 \times \cdots \times \mathscr R_n \cong \mathscr T_1 \times \mathscr T_2$, where $\mathscr T_1, \mathscr T_2$ are type ${\rm II_1}$ ergodic equivalence relations, then there exists a partition $\{1, \dots, n\} = T_1 \sqcup T_2$ into nonempty subsets such that  for every $j \in \{1, 2\}$, we have $\mathscr T_j \preceq_{\mathscr R} \mathscr R_{T_j}$ as subequivalence relations, where $\mathscr R_{T_j} = \prod_{i \in T_j} \mathscr R_i$.

\item [$(\rm ii)$] If $\mathscr R = \mathscr R_1 \times \cdots \times \mathscr R_n \cong \mathscr T_1 \times \cdots \times \mathscr T_p$, where $\mathscr T_1, \dots, \mathscr T_p$ are type ${\rm II_1}$ ergodic equivalence relations and $p \geq n$, then $n = p$ and upon permuting the indices, $\mathscr T_i$ and $\mathscr R_i$ are stably isomorphic for every $1 \leq i \leq n$. 

\end{itemize}
\end{lettercor}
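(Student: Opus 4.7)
The plan is to deduce Corollary C from Theorem A via an \emph{intersection property} for Popa's intertwining in product equivalence relations. I treat (i) first. Fix the decomposition $\mathscr R = \mathscr R_1 \times \cdots \times \mathscr R_n \cong \mathscr T_1 \times \mathscr T_2$. For each $i \in \{1,\dots,n\}$, write $\mathscr R = \mathscr R_i \times \mathscr S_i$ where $\mathscr S_i := \prod_{k \neq i} \mathscr R_k$. Since $\mathscr R_i \in \mathfrak Z$, Theorem A applied with $\mathscr S = \mathscr S_i$ yields the dichotomy: for every $i$, either $\mathscr T_1 \preceq_{\mathscr R} \mathscr S_i$ or $\mathscr T_2 \preceq_{\mathscr R} \mathscr S_i$. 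Set
\[
T_1 := \{i : \mathscr T_2 \preceq_{\mathscr R} \mathscr S_i\}, \qquad T_2 := \{1,\dots,n\} \setminus T_1,
\]
so that $\mathscr T_1 \preceq_{\mathscr R} \mathscr S_i$ for every $i \in T_2$. The heart of the argument is an intersection lemma: if an ergodic subequivalence relation $\mathscr T \subset \mathscr R$ satisfies $\mathscr T \preceq_{\mathscr R} \mathscr S_i$ for every $i$ in some subset $I \subset \{1,\dots,n\}$, then $\mathscr T \preceq_{\mathscr R} \prod_{k \notin I} \mathscr R_k$. Granted this, applying it to $(\mathscr T_2, T_1)$ yields $\mathscr T_2 \preceq_{\mathscr R} \mathscr R_{T_2}$, and to $(\mathscr T_1, T_2)$ yields $\mathscr T_1 \preceq_{\mathscr R} \mathscr R_{T_1}$. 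Non-emptiness of both $T_1$ and $T_2$ follows because otherwise the intersection lemma would force one of $\mathscr T_1, \mathscr T_2$ to embed into the trivial relation, contradicting the type ${\rm II_1}$ hypothesis.

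For (ii), I argue by induction on $n$, using that each $\mathscr R_i$ is prime (an immediate consequence of strong primeness from Theorem A). The base case $n=1$ is direct: if $p \geq 2$ with both $\mathscr T_1$ and $\mathscr T_2 \times \cdots \times \mathscr T_p$ of type ${\rm II_1}$, primeness of $\mathscr R_1$ is violated. For the inductive step, apply (i) to the decomposition $\mathscr R \cong \mathscr T_j \times \bigl(\prod_{l \neq j} \mathscr T_l\bigr)$ for each $j \in \{1,\dots,p\}$, producing a nonempty subset $A_j \subset \{1,\dots,n\}$ with $\mathscr T_j \preceq_{\mathscr R} \mathscr R_{A_j}$ and $\prod_{l \neq j} \mathscr T_l \preceq_{\mathscr R} \mathscr R_{A_j^c}$. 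For $j \neq k$, combining $\mathscr T_j \preceq_{\mathscr R} \mathscr R_{A_j}$ with $\mathscr T_j \preceq_{\mathscr R} \mathscr R_{A_k^c}$ (the latter because $\mathscr T_j$ is a subrelation of $\prod_{l \neq k} \mathscr T_l$) and the intersection lemma yields $\mathscr T_j \preceq_{\mathscr R} \mathscr R_{A_j \setminus A_k}$. Iterating, we may replace $A_j$ by $A_j \setminus \bigcup_{k \neq j} A_k$ while keeping each $A_j$ nonempty, so the resulting $\{A_j\}_{j=1}^{p}$ become pairwise disjoint nonempty subsets of $\{1,\dots,n\}$ and $p \leq n$. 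With $p \geq n$ by hypothesis, this forces $p = n$, $|A_j| = 1$ for every $j$, and the existence of a permutation $\sigma$ with $\mathscr T_j \preceq_{\mathscr R} \mathscr R_{\sigma(j)}$. A standard intertwining-to-isomorphism argument, using the compatibility of this intertwining with the complementary product factorizations $\mathscr R \cong \mathscr T_j \times \prod_{l \neq j}\mathscr T_l$ and $\mathscr R \cong \mathscr R_{\sigma(j)} \times \mathscr S_{\sigma(j)}$, upgrades this to stable isomorphism of $\mathscr T_j$ and $\mathscr R_{\sigma(j)}$.

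The main obstacle is the intersection lemma. In the tensor product setting for ${\rm II_1}$ factors this is joint-intertwining machinery of Ozawa--Popa type, with an analogue for Cartan inclusions developed in recent work of the first author; here it has to be formulated purely at the level of Popa's intertwining formalism for equivalence relations as recalled in Section \ref{preliminaries}, using the Cartan subalgebra $\rL^\infty(X,\nu) \subset \rL(\mathscr R)$ in a way compatible with the product structure $\mathscr R = \prod_i \mathscr R_i$. Carrying this out cleanly, and promoting the resulting chain of intertwinings in (ii) to honest stable isomorphism, is the technical crux; the remainder of the argument is combinatorial.
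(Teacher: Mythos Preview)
Your approach is correct and rests on the same technical core as the paper --- the ``intersection lemma'' you isolate is precisely what the paper obtains by passing to von Neumann algebras via Facts~\ref{fact-equivalence}$(\rm i)$, upgrading $\preceq$ to $\preceq^s$ with \cite[Lemma 2.4(3)]{DHI16}, and invoking \cite[Lemma 2.8(2)]{DHI16}. The organization differs in two places. For $(\rm i)$, the paper starts from \emph{minimal} subsets $T_j$ with $\mathscr T_j \preceq_{\mathscr R} \mathscr R_{T_j}$ and then proves separately that $T_1 \cup T_2 = \{1,\dots,n\}$ (a covering step using \cite[Lemma 2.3]{BV12} and \cite[Theorem 4.4]{HI17}) and $T_1 \cap T_2 = \emptyset$ (via Theorem~\ref{main-theorem} plus the intersection lemma); your index-by-index application of Theorem~\ref{main-theorem} builds the partition directly and sidesteps the covering step entirely, which is a genuine simplification. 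For $(\rm ii)$, the paper peels off one factor at a time by induction on $n$, invoking Facts~\ref{fact-equivalence}$(\rm vi)$ at each stage, whereas your disjointness argument gives $p=n$ and singleton $A_j$'s in one pass; your ``standard intertwining-to-isomorphism'' step is then exactly the paper's use of Facts~\ref{fact-equivalence}$(\rm vi)$ followed by (strong) primeness of $\mathscr R_{\sigma(j)}$ to kill the auxiliary factor, and you should say so explicitly. Note also that your framing of $(\rm ii)$ as an induction is vestigial --- your argument never calls the inductive hypothesis. Finally, since no purely equivalence-relation proof of the intersection lemma is available, you should make the von Neumann detour explicit rather than leaving it as an obstacle.
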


We point out that the first unique prime factorization result appeared in von Neumann algebra theory in the pioneering work of Ozawa--Popa \cite{OP03}. Indeed, they showed that tensor products of type ${\rm II_1}$ factors arising from countable discrete icc nonamenable biexact groups have a unique tensor product decomposition upon taking amplifications and permuting the indices.

Assuming that the equivalence relations $\mathscr R_1, \dots, \mathscr R_n$ that belong to $\mathfrak Z$ are moreover strongly ergodic, we obtain the following sharper unique prime factorization result.

\begin{lettercor}\label{main-cor-sharp}
Let $n \geq 2$ and $\mathscr R_1, \dots, \mathscr R_n$ be strongly ergodic equivalence relations that belong to $\mathfrak Z$. Then the following assertions hold:

\begin{itemize}

\item [$(\rm i)$] If $\mathscr R = \mathscr R_1 \times \cdots \times \mathscr R_n \cong \mathscr T_1 \times \mathscr T_2$, where $\mathscr T_1, \mathscr T_2$ are type ${\rm II_1}$ ergodic equivalence relations, then there exist a partition $\{1, \dots, n\} = T_1 \sqcup T_2$ into nonempty subsets and $t > 0$ such that 
$$\mathscr T_1^{t} \cong \mathscr R_{T_1} \quad  \text{and} \quad  \mathscr T_2^{1/t} \cong \mathscr R_{T_2}.$$

\item [$(\rm ii)$] If $\mathscr R = \mathscr R_1 \times \cdots \times \mathscr R_n \cong \mathscr T_1 \times \cdots \times \mathscr T_p$, where $\mathscr T_1, \dots,  \mathscr T_p$ are type ${\rm II_1}$ ergodic equivalence relations and $p \geq n$, then $n = p$ and there exist $t_1, \dots, t_n > 0$ such that $t_1 \cdots t_n = 1$ and upon permuting the indices, $\mathscr T_i^{t_i} \cong \mathscr R_i$ for every $1 \leq i \leq n$. 

\end{itemize}
\end{lettercor}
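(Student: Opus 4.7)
We aim to refine Corollary \ref{main-cor} under the additional hypothesis that each $\mathscr R_i$ is strongly ergodic, equivalently that the Feldman--Moore factor $\rL(\mathscr R_i)$ is \emph{full} in the sense of Connes; fullness propagates to tensor products. The plan is to invoke Corollary \ref{main-cor} to extract the partition and Popa intertwinings, then to upgrade these to honest stable isomorphisms by exploiting fullness.

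For (i), Corollary \ref{main-cor}(i) furnishes a partition $\{1,\dots,n\} = T_1 \sqcup T_2$ with $\mathscr T_j \preceq_{\mathscr R} \mathscr R_{T_j}$ for $j \in \{1,2\}$. Set $M = \rL(\mathscr R)$, $P_j = \rL(\mathscr R_{T_j})$, $N_j = \rL(\mathscr T_j)$, so that $M = P_1 \ovt P_2 = N_1 \ovt N_2$ with $N_j \preceq_M P_j$. Each $P_j$ is a tensor product of full ${\rm II}_1$ factors, hence itself full. I would then invoke a structural result in the spirit of Ozawa--Popa's unique prime decomposition \cite{OP03}: in a full factor carrying two tensor decompositions whose components mutually Popa-intertwine, the decompositions become conjugate after a unitary and an amplification. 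This produces $t > 0$ identifying $N_1$ with $P_1^{t}$ and (by trace-preservation in $M$) $N_2$ with $P_2^{1/t}$, which upon translating back through Feldman--Moore gives $\mathscr T_1^{t} \cong \mathscr R_{T_1}$ and $\mathscr T_2^{1/t} \cong \mathscr R_{T_2}$.

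For (ii), I would induct on $n$, with (i) as the inductive step. Corollary \ref{main-cor}(ii) provides $p = n$ and, after reindexing, $\mathscr T_i$ stably isomorphic to $\mathscr R_i$ for every $i$, so each $\mathscr T_i$ is (strongly) prime via Theorem A. Applying (i) to the split $\mathscr T_1 \times (\mathscr T_2 \times \cdots \times \mathscr T_n) \cong \mathscr R_1 \times (\mathscr R_2 \times \cdots \times \mathscr R_n)$, the resulting partition must satisfy $|T_1| = 1$: otherwise $\mathscr T_1^{t}$ would decompose as a product of at least two nontrivial equivalence relations, contradicting the primeness of $\mathscr T_1$. This yields $t_1 > 0$ with $\mathscr T_1^{t_1} \cong \mathscr R_1$ and $(\mathscr T_2 \times \cdots \times \mathscr T_n)^{1/t_1} \cong \mathscr R_2 \times \cdots \times \mathscr R_n$. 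Absorbing the amplification $1/t_1$ into $\mathscr T_2$ reduces the second isomorphism to the $(n-1)$-fold case, and iterating produces amplifications whose telescoping product equals $1$.

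The crux is the upgrade step in (i): promoting two mutual Popa intertwinings inside a full factor to an honest unitary conjugation of tensor decompositions. Strong ergodicity enters precisely through the Connes fullness of each $\rL(\mathscr R_i)$, which controls relative commutants and forces the two decompositions of $M$ to refine each other compatibly; without fullness, only the weaker ``$\preceq$'' conclusion of Corollary \ref{main-cor} persists. Once this upgrade is secured, both (i) and the inductive step in (ii) follow by bookkeeping of indices and amplifications.
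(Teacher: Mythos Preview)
Your overall strategy matches the paper's: obtain fullness of the factors $\rL(\mathscr R_i)$, use it to upgrade the intertwinings from Corollary~\ref{main-cor} to genuine tensor conjugacy, and translate back to equivalence relations. However, two steps contain real gaps.

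First, your claimed equivalence ``$\mathscr R_i$ strongly ergodic $\Leftrightarrow$ $\rL(\mathscr R_i)$ full'' is false in general: the paper's own Proposition~\ref{prop-counterexample} exhibits a strongly ergodic $\mathscr R$ with $\rL(\mathscr R)$ McDuff, hence non-full. What makes fullness hold here is specific to the class $\mathfrak Z$: by Proposition~\ref{prop-not-inner-amenable}, any Zariski dense discrete $\Gamma_i < \mathbf G_i(k_i)$ is not inner amenable, and then Choda's theorem \cite{Ch81} yields fullness of $\rL(\Gamma_i \curvearrowright X_i)$ from strong ergodicity. You need to invoke this, not a nonexistent general equivalence.

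Second, your ``upgrade step'' is too vague to count as a proof, and the intertwining you start from is not quite what you claim. Corollary~\ref{main-cor}(i) gives $\mathscr T_j \preceq_{\mathscr R} \mathscr R_{T_j}$ \emph{as subequivalence relations}, i.e.\ $\rL(\mathscr T_j) \ovt \rL^\infty(Y_{3-j}) \preceq_M \rL(\mathscr R_{T_j}) \ovt \rL^\infty(X_{T_{3-j}})$, not $N_j \preceq_M P_j$ directly. The paper's route to close this gap is: pass from that intertwining to \emph{relative amenability} of $\rL(\mathscr T_j)$ over $\rL(\mathscr R_{T_j})$ inside $M$ (via \cite[Lemma~2.6(3)]{DHI16} and \cite[Proposition~2.4(3)]{OP07}), then apply Isono--Marrakchi \cite[Lemma~5.2]{IM19}, which uses fullness of all four tensor factors, to obtain $\rL(\mathscr T_j) \preceq_M \rL(\mathscr R_{T_j})$. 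Finally, the passage from this von Neumann algebra statement back to $\mathscr T_j^{t} \cong \mathscr R_{T_j}$ is itself nontrivial and is handled by arguing as in \cite[Corollary~E]{Ho15}; you cannot simply ``translate back through Feldman--Moore'' without controlling the Cartan subalgebras. Your invocation of an unspecified Ozawa--Popa-style result does not substitute for these precise ingredients.
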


The statement of Corollary \ref{main-cor-sharp}$(\rm ii)$ is sharper than the one in Corollary \ref{main-cor}$(\rm ii)$ in the sense that we may choose $t_1, \dots, t_n > 0$ such that $\mathscr T_i^{t_i} \cong \mathscr R_i$ for every $1 \leq i \leq n$ in such a way that $t_1 \cdots t_n = 1$. 

In particular, by Borel's density theorem \cite{Bo60} and Kazhdan's theorem \cite{Ka66}, Corollary \ref{main-cor-sharp} applies to direct products of equivalence relations arising from essentially free ergodic pmp actions of lattices $\Gamma < \mathbf G(k)$, where $\mathbf G$ is an almost $k$-simple algebraic $k$-group such that $\rk_k(\mathbf G) \geq 2$ (e.g.\! $\SL_n(\Z) < \SL_n(\R)$ for every $n \geq 3$). Corollary \ref{main-cor-sharp} should be compared with Hoff's unique prime factorization result for strongly ergodic equivalence relations with nontrivial $1$-cohomology \cite{Ho15}. Let us mention that Isono--Marrakchi \cite{IM19} obtained a unique prime factorization result for tensor products of group measure space factors arising from {\em compact} pmp actions of higher rank lattices with Kazhdan's property (T). In that respect, Corollaries \ref{main-cor} and \ref{main-cor-sharp} provide the first unique prime factorization result for orbit equivalence relations arising from {\em arbitrary} pmp actions of higher rank lattices.

Our second main theorem gives a complete characterization of primeness for orbit equivalence relations arising from essentially free ergodic pmp actions of product groups $\Gamma_1 \times \cdots \times \Gamma_n \curvearrowright (X, \nu)$, where $\Gamma_i$ is a discrete group as in Theorem \ref{main-theorem} with Kazhdan's property (T).

\begin{letterthm}\label{main-theorem-decomposition}
Let $n \geq 1$. For every $1 \leq i \leq n$, let $k_i$ be a local field of characteristic zero, $\mathbf G_i$ a $k_i$-isotropic $k_i$-simple algebraic $k_i$-group and $\Gamma_i < \mathbf G_i(k_i)$ a Zariski dense discrete subgroup with Kazhdan's property {\em (T)}. Set $\Gamma = \Gamma_1 \times \cdots \times \Gamma_n$. Let $\Gamma \curvearrowright (X, \nu)$ be an essentially free ergodic pmp action and set $\mathscr R = \mathscr R(\Gamma \curvearrowright X)$. Assume that $\mathscr R \cong \mathscr T_1 \times \mathscr T_2$, where $\mathscr T_1, \mathscr T_2$ are type ${\rm II_1}$ ergodic equivalence relations.

Then there exist a partition $\{1, \dots, n\} = T_1 \sqcup T_2$ into nonempty subsets, $t > 0$, an essentially free ergodic pmp action $\Gamma_{T_j} \curvearrowright (X_j, \nu_j)$, where $\Gamma_{T_j} = \prod_{i \in T_j} \Gamma_i$ for every $j \in \{1, 2\}$, such that the following assertions hold: 
\begin{itemize}
\item [$(\rm i)$] $\Gamma \curvearrowright (X, \nu) \cong \Gamma_{T_1} \times \Gamma_{T_2} \curvearrowright (X_1 \times X_2, \nu_1 \otimes \nu_2)$ and
\item [$(\rm ii)$] $\mathscr T_1^t \cong \mathscr R(\Gamma_{T_1} \curvearrowright X_1)$ and  $\mathscr T_2^{1/t} \cong\mathscr R(\Gamma_{T_2} \curvearrowright X_2)$.
\end{itemize}
\end{letterthm}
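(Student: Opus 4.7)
The strategy is to first produce the partition $\{1,\dots,n\} = T_1 \sqcup T_2$ by locating each $\Gamma_i$ inside one of the two tensor factors of $M := \rL(\mathscr R) = \rL(\mathscr T_1) \ovt \rL(\mathscr T_2)$, then to upgrade this group-theoretic partition to a product decomposition of the action $\Gamma \curvearrowright (X,\nu)$, and finally to invoke Theorem~\ref{main-theorem} (via Corollary~\ref{main-cor-sharp}) to match $\mathscr T_j$ with $\mathscr R(\Gamma_{T_j} \curvearrowright X_j)$ up to amplification.

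\emph{Producing the partition.} Pass to $M = \rL(\mathscr T_1) \ovt \rL(\mathscr T_2)$, inside which $\rL^\infty(X)$ is a common Cartan and the embedding $\rL(\Gamma) \subset M$ places each $\rL(\Gamma_i)$ inside $M$. Since $\Gamma_i$ has Kazhdan's property (T), a standard spectral-gap/malleable-deformation argument applied to the tensor splitting $M = \rL(\mathscr T_1) \ovt \rL(\mathscr T_2)$ (in the spirit of Popa's intertwining theory) yields the dichotomy: for every $i$ there exists $j(i) \in \{1,2\}$ with $\rL(\Gamma_i) \preceq_M \rL(\mathscr T_{j(i)})$. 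Set $T_j = \{i : j(i) = j\}$. If, say, $T_2$ were empty then the $\rL(\Gamma_i)$ would together intertwine into $\rL(\mathscr T_1)$, so the whole of $\rL(\Gamma) = \bigvee_i \rL(\Gamma_i)$ would satisfy $\rL(\Gamma) \preceq_M \rL(\mathscr T_1)$; combined with $M = \rL^\infty(X) \vee \rL(\Gamma)$ and the fact that $\rL(\mathscr T_2)$ lies in the relative commutant of $\rL(\mathscr T_1)$, this would force $\mathscr T_2$ to have finite orbits, contradicting the type~${\rm II_1}$ hypothesis.

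\emph{Lifting to an action splitting.} Using $\rL(\Gamma_{T_j}) \preceq_M \rL(\mathscr T_j)$ together with the commutation of $\Gamma_{T_1}$ and $\Gamma_{T_2}$ in $\Gamma$, one derives that the Cartan subalgebra factors as $\rL^\infty(X) = \rL^\infty(X_1) \ovt \rL^\infty(X_2)$ with $\Gamma_{T_j}$ acting only on the $j$-th coordinate. Here property (T) is used again—to rule out non-product $\Gamma_{T_j}$-ergodic components—together with standard Zimmer cocycle-reduction tools to promote the measurable factorization. Essential freeness of $\Gamma \curvearrowright X$ is inherited by each $\Gamma_{T_j} \curvearrowright X_j$, and ergodicity of $\Gamma \curvearrowright X$ forces ergodicity of each factor, giving assertion (i).

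\emph{Identifying the tensor factors.} With the action splitting in place, one has
$$\mathscr R \cong \mathscr R(\Gamma_{T_1} \curvearrowright X_1) \times \mathscr R(\Gamma_{T_2} \curvearrowright X_2) \cong \mathscr T_1 \times \mathscr T_2.$$
Iterating Steps 1--2 inside each block $T_j$ fully refines the splitting to individual $\Gamma_i$-factors $\Gamma_i \curvearrowright X_{j,i}$ with each $\mathscr R(\Gamma_i \curvearrowright X_{j,i})$ strongly ergodic (by Kazhdan's theorem) and in $\mathfrak Z$. Corollary~\ref{main-cor-sharp} applied to both product decompositions then yields some $t>0$ with $\mathscr T_1^t \cong \mathscr R(\Gamma_{T_1} \curvearrowright X_1)$ and $\mathscr T_2^{1/t} \cong \mathscr R(\Gamma_{T_2} \curvearrowright X_2)$, giving (ii). The main obstacle is the lifting in the second paragraph: going from a subalgebra intertwining of $\rL(\Gamma_i)$ into a tensor factor to a genuine measure-theoretic product decomposition of $(X,\nu)$ simultaneously compatible with both Cartan presentations of $M$. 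This is where the combination of property (T) and the ergodic-theoretic rigidity of Zariski dense actions (supplied by the techniques behind Theorem~\ref{main-theorem}) is decisive.
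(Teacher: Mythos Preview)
Your Step 1 has a genuine gap: there is no ``standard spectral-gap/malleable-deformation argument'' available here, because $\mathscr T_1, \mathscr T_2$ are \emph{arbitrary} type ${\rm II_1}$ ergodic equivalence relations and the tensor splitting $\rL(\mathscr T_1) \ovt \rL(\mathscr T_2)$ carries no deformation whatsoever. Property (T) for $\Gamma_i$ provides rigidity only when played against an $s$-malleable deformation, a closable derivation, a Bernoulli or Gaussian structure, and so on; none of these are present for a generic Cartan inclusion. This is exactly why the paper abandons deformation/rigidity for this step and instead obtains the intertwining in the \emph{opposite} direction via Zimmer's ergodic-theoretic method: Theorem \ref{thm-analogy} (a non-product variant of Theorem \ref{main-theorem}) shows that for each $i$ some $\mathscr T_j$ embeds into $\mathscr R(\prod_{\ell \neq i} \Gamma_\ell \curvearrowright X)$, and a minimality argument (Proposition \ref{prop-key}) then extracts the partition together with $\rL(\mathscr T_j) \preceq^s_M \rL(\Gamma_{T_j} \curvearrowright X)$.

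Your Step 3 also breaks down. Iterating to ``fully refine the splitting to individual $\Gamma_i$-factors'' is not possible in general: the action $\Gamma_{T_j} \curvearrowright X_j$ need not decompose any further as a product (indeed Corollary \ref{main-cor-decomposition} shows the refinement can halt at any intermediate partition). Hence $\mathscr R$ is typically \emph{not} a product of relations in $\mathfrak Z$, and Corollary \ref{main-cor-sharp} cannot be invoked. The paper instead feeds the intertwining $\rL(\mathscr T_j) \preceq^s_M \rL(\Gamma_{T_j} \curvearrowright X)$ directly into \cite[Theorem 3.2]{Dr19}, which simultaneously produces the action splitting $(\rm i)$ and the identification $\rL(\mathscr T_j)^{t^{\pm 1}} = u\,\rL(\Gamma_{T_j} \curvearrowright X_j)\,u^*$, and then descends to equivalence relations via the argument of \cite[Corollary E]{Ho15}. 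Your Step 2 gestures at this lifting but does not supply it; the actual content is the cited result from \cite{Dr19}, whose input is precisely the intertwining direction the paper establishes, not the one you claim.
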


Theorem \ref{main-theorem-decomposition} is an analogue of Drimbe's primeness characterization for group measure space factors arising from essentially free ergodic pmp actions of products of hyperbolic groups with Kazhdan's property (T) \cite{Dr19}. By Borel's density theorem \cite{Bo60} and Kazhdan's theorem \cite{Ka66}, Theorem \ref{main-theorem-decomposition} applies to all essentially free ergodic pmp actions $\Gamma \curvearrowright (X, \nu)$, where $\Gamma_i < \mathbf G_i(k_i)$ is a lattice and $\mathbf G_i$ is a $k_i$-simple algebraic $k_i$-group such that $\rk_{k_i}(\mathbf G_i) \geq 2$ for every $i \in \{1, \dots, n\}$. 

As a consequence of Theorem \ref{main-theorem-decomposition}, we obtain the following existence and uniqueness of a prime factorization for equivalence relations.

\begin{lettercor}\label{main-cor-decomposition}
Let $n \geq 1$. For every $1 \leq i \leq n$, let $k_i$ be a local field of characteristic zero, $\mathbf G_i$ a $k_i$-isotropic $k_i$-simple algebraic $k_i$-group and $\Gamma_i < \mathbf G_i(k_i)$ a Zariski dense discrete subgroup with Kazhdan's property {\em (T)}. Set $\Gamma = \Gamma_1 \times \cdots \times \Gamma_n$. Let $\Gamma \curvearrowright (X, \nu)$ be an essentially free ergodic pmp action.

Then there exist a unique partition $\{1, \dots, n\} = T_1 \sqcup \cdots \sqcup T_r$ into nonempty subsets (upon permuting the indices), an essentially free ergodic pmp action $\Gamma_{T_j} \curvearrowright (X_j, \nu_j)$, where $\Gamma_{T_j} = \prod_{i \in T_j} \Gamma_i$ for every $j \in \{1, \dots, r\}$, such that the following assertions hold:
\begin{itemize}
\item [$(\rm i)$] $\Gamma \curvearrowright (X, \nu) \cong \Gamma_{T_1} \times \cdots \times \Gamma_{T_r} \curvearrowright (X_1 \times \cdots \times X_r, \nu_1 \otimes \cdots \otimes \nu_r)$ and
\item [$(\rm ii)$] $\mathscr R(\Gamma_{T_j} \curvearrowright X_j)$ is prime for every $j \in \{1, \dots, r\}$.
\end{itemize}

\end{lettercor}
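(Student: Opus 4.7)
The plan is to prove Corollary~\ref{main-cor-decomposition} by induction on $n$, applying Theorem~\ref{main-theorem-decomposition} iteratively. The key observation is that for any nonempty $T \subseteq \{1, \ldots, n\}$, the subproduct $\Gamma_T = \prod_{i \in T} \Gamma_i$ again satisfies all hypotheses of the corollary (each $\Gamma_i$ for $i \in T$ remains a Zariski dense discrete subgroup with Kazhdan's property~(T) in an isotropic almost simple algebraic group).

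\textbf{Existence.} For the base case $n = 1$, Theorem~\ref{main-theorem-decomposition} immediately implies that $\mathscr R(\Gamma_1 \curvearrowright X)$ is prime, since any nontrivial product splitting would require a partition of the singleton $\{1\}$ into two nonempty subsets, which is impossible. For $n \geq 2$, if $\mathscr R$ is already prime then take $r = 1$ and $T_1 = \{1, \ldots, n\}$; otherwise write $\mathscr R \cong \mathscr T_1 \times \mathscr T_2$ and invoke Theorem~\ref{main-theorem-decomposition} to obtain a nontrivial partition $\{1, \ldots, n\} = U_1 \sqcup U_2$ and essentially free ergodic pmp actions $\Gamma_{U_j} \curvearrowright (Z_j, \eta_j)$ such that $\Gamma \curvearrowright (X, \nu) \cong \Gamma_{U_1} \times \Gamma_{U_2} \curvearrowright (Z_1 \times Z_2, \eta_1 \otimes \eta_2)$. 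Since $|U_j| < n$, the induction hypothesis applied to each $\Gamma_{U_j} \curvearrowright Z_j$ produces a prime factorization along a partition $\mathcal P_j$ of $U_j$, and the concatenation $\mathcal P_1 \sqcup \mathcal P_2$ yields the required prime factorization of $\Gamma \curvearrowright (X, \nu)$.

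\textbf{Uniqueness.} Suppose $\Gamma \curvearrowright (X, \nu)$ admits two prime factorizations along partitions $\mathcal P = \{T_1, \ldots, T_r\}$ and $\mathcal Q = \{S_1, \ldots, S_s\}$; we again induct on $n$. If $r = 1$ then $\mathscr R$ itself is prime, which rules out any nontrivial $\mathcal Q$-splitting, forcing $s = 1$ and $\mathcal P = \mathcal Q$. If $r \geq 2$, fix a block $T \in \mathcal P$ with $|T| < n$, giving a product splitting $\mathscr R \cong \mathscr R(\Gamma_T \curvearrowright X_T) \times \mathscr R(\Gamma_{T^c} \curvearrowright X_{T^c})$ whose first factor is prime. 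Apply Theorem~\ref{main-theorem-decomposition} to this splitting to produce a partition $\{1, \ldots, n\} = V_1 \sqcup V_2$ and a realization of $\Gamma \curvearrowright (X, \nu)$ as a product action along it. Using the $\mathcal Q$-decomposition and the ergodicity of each block action, one identifies each factor subalgebra in a product decomposition of $\rL^\infty(X)$ with the fixed-point algebra of the complementary subproduct; matching these characterizations for the $T$-splitting against the $\mathcal Q$-splitting shows that the partition $V_1 \sqcup V_2$ is compatible with $\mathcal Q$, and primeness of $\mathscr R(\Gamma_T \curvearrowright X_T)$ and of each $\mathscr R(\Gamma_{S_k} \curvearrowright Y_k)$ then forces $T$ to coincide with a single block of $\mathcal Q$. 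Iterating over all blocks of $\mathcal P$ yields $\mathcal P = \mathcal Q$, and the isomorphism of the corresponding block actions follows from assertion~(i) of Theorem~\ref{main-theorem-decomposition} together with the induction hypothesis applied to each $\Gamma_{T_j} \curvearrowright X_j$.

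\textbf{Main obstacle.} The critical step is the block-matching in the uniqueness argument: given $T \in \mathcal P$, showing that the partition produced by Theorem~\ref{main-theorem-decomposition} for the induced splitting must align with $\mathcal Q$, and ultimately that $T$ coincides with a single $\mathcal Q$-block. This relies on the rigidity of product decompositions of essentially free ergodic pmp actions of $\Gamma$, namely on recovering each factor subalgebra as a fixed-point algebra under the complementary subproduct (where ergodicity of each factor action and Kazhdan's property~(T) enter essentially), combined with the primeness of the individual factors, which prevents spurious further refinements. Once this rigidity is secured, the remainder reduces to combinatorics of partitions handled by the induction hypothesis.
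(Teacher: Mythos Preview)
Your existence argument is fine and matches the paper's: repeatedly apply Theorem~\ref{main-theorem-decomposition} until every factor is prime, which terminates since each application strictly refines the partition.

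Your uniqueness argument, however, takes a detour that does not close. Applying Theorem~\ref{main-theorem-decomposition} again to the $T$-splitting produces \emph{some} partition $V_1 \sqcup V_2$, but the theorem gives no control over which partition arises; in particular there is no a priori reason for $V_1 \sqcup V_2$ to refine $\mathcal Q$, nor any reason to have $V_1 = T$. Your appeal to ``matching fixed-point algebra characterizations'' is exactly the step that needs proof, and you do not explain how compatibility of $V_1 \sqcup V_2$ with $\mathcal Q$ follows from it. Even granting compatibility with $\mathcal Q$, you would only conclude that $V_1$ is a single $\mathcal Q$-block, not that $T$ is.

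The paper bypasses this entirely by a direct comparison. Given the two product realizations along $\mathcal P = \{T_1,\dots,T_r\}$ and $\mathcal Q = \{S_1,\dots,S_p\}$ (say $r \ge p$), fix $T_j$ and restrict the $\mathcal Q$-product action to $\Gamma_{T_j}$: one gets $\Gamma_{T_j} \curvearrowright X \cong \prod_\ell \bigl(\Gamma_{S_\ell \cap T_j} \curvearrowright Y_\ell\bigr)$. On the other hand, via $\mathcal P$ the action $\Gamma_{T_j} \curvearrowright X$ is $\Gamma_{T_j} \curvearrowright X_j$ times a trivial action, so almost every ergodic component is isomorphic to $\Gamma_{T_j} \curvearrowright X_j$. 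If $T_j$ meets two distinct $S_\ell$'s, the $\mathcal Q$-picture then exhibits $\mathscr R(\Gamma_{T_j} \curvearrowright X_j)$ as a nontrivial product of type ${\rm II_1}$ relations, contradicting primeness. Hence each $T_j$ lies in some $S_\ell$, forcing $r = p$ and $\mathcal P = \mathcal Q$ up to permutation. This is both shorter and avoids the circularity in your approach: you do not need Theorem~\ref{main-theorem-decomposition} a second time, only the primeness hypothesis and the product structure you already have.
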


\subsection*{Comments on the proofs}

The proof of Theorem \ref{main-theorem} builds upon Zimmer's proof of \cite[Theorem 1.1]{Zi81} and combines ergodic theory of algebraic group actions and Popa's intertwining theory for equivalence relations. Unlike Zimmer's proof of \cite[Theorem 1.1]{Zi81}, we do not use induction and we work directly with the orbit equivalence relation $\mathscr R(\Gamma \curvearrowright X)$. This allows us to deal with essentially free ergodic pmp action of {\em arbitrary} Zariski dense discrete subgroups $\Gamma < \mathbf G(k)$ rather than lattices $\Gamma < \mathbf G(k)$ as in \cite{Zi81}. The proof of Corollary \ref{main-cor} follows by combining Theorem \ref{main-theorem} with Popa's intertwining theory.

Using Tucker-Drob's results on the structure of inner amenable groups \cite{TD14}, we observe in Proposition \ref{prop-not-inner-amenable} that any Zariski dense discrete subgroup $\Gamma < \mathbf G(k)$ as in Theorem \ref{main-theorem} is not inner amenable. This implies that for any essentially free strongly ergodic pmp action $\Gamma \curvearrowright (X, \nu)$, the group measure space von Neumann factor $\rL(\Gamma \curvearrowright X)$ is {\em full} meaning that it has no nontrivial central sequences (see \cite{Ch81}). For the proof of Corollary \ref{main-cor-sharp}, we use Theorem \ref{main-theorem} and we exploit Popa's intertwining theory in the setting of von Neumann algebras in combination with Isono--Marrakchi's results on tensor product decompositions of full factors \cite{IM19}.

The proof of Theorem \ref{main-theorem-decomposition} and Corollary \ref{main-cor-decomposition} follows the proof of Theorem \ref{main-theorem} and moreover exploits intertwining techniques from \cite{Dr19}.

\subsection*{Acknowledgments} This work was initiated when CH was visiting the Mathematical Institute of the University of Oxford during Hilary Term 2024. He is grateful towards Stuart White for his kind invitation. We also thank Adrian Ioana for his valuable comments.

%

\section{Preliminaries}\label{preliminaries}

\subsection{Equivalence relations}

Let $\mathscr R$ be an equivalence relation defined on a standard probability space $(X, \nu)$. Denote by $\mathscr B(X)$ (resp.\! $\mathscr B(\mathscr R)$) the $\sigma$-algebra of all Borel subsets of $X$ (resp.\! $\mathscr R$). For every $x \in X$, we denote by $[x]_{\mathscr R}$ the $\mathscr R$-equivalence class of $x \in X$. We define the $\sigma$-finite Borel measure $m$ on $\mathscr R$ by the formula
$$\forall \mathscr W \in \mathscr B(\mathscr R), \quad m (\mathscr W) = \int_{X} \sharp (\left\{(x, y) \mid y \in [x]_{\mathscr R} \right\}\cap \mathscr W) \, {\rm d}\nu(x).$$
We denote by $\Aut(X, \nu)$ the group of all pmp Borel automorphisms $\theta : (X, \nu) \to (X, \nu)$. We denote by $\Aut(\mathscr R)$ the automorphism group of $\mathscr R$ that consists of all pmp Borel automorphisms $\theta \in \Aut(X, \nu)$ such that $(\theta(x), \theta(y)) \in \mathscr R$ and $(\theta^{-1}(x), \theta^{-1}(y)) \in \mathscr R$ for $m$-almost every $(x, y) \in \mathscr R$. Then we denote by $[\mathscr R] < \Aut(\mathscr R)$ the {\em full group} of $\mathscr R$ that consists of all automorphisms $\theta \in \Aut(\mathscr R)$ for which $(\theta(x), x) \in \mathscr R$ for $\nu$-almost every $x \in X$. The uniform metric 
$$d_u : [\mathscr R] \times [\mathscr R] \to \R_+ : (\theta, \rho) \mapsto \nu(\{x \in X \mid \theta(x) \neq \rho(x)\})$$
is complete and separable and so the full group $[\mathscr R]$ is a Polish group. By \cite{FM75}, there exists a countable subgroup $\Lambda < [\mathscr R]$ such that $\mathscr R = \mathscr R(\Lambda \curvearrowright X)$ is the orbit equivalence relation of the pmp action $\Lambda \curvearrowright (X, \nu)$. The {\em full pseudogroup} of $\mathscr R$ consists of all pmp Borel partial isomorphisms $\theta : U \to V$, where $U, V \in \mathscr B(X)$ and $(\theta(x), x) \in \mathscr R$ for $\nu$-almost every $x \in U$. For every $\theta : U \to V \in [[\mathscr R]]$, there exists $\rho \in [\mathscr R]$ such that $\rho|_U = \theta$. We say that $\mathscr R$ is {\em ergodic} if and only if the Koopman representation $\kappa : [\mathscr R] \to \mathscr U(\rL^2(X, \nu)^0)$ is ergodic, where $\rL^2(X, \nu)^0 = \rL^2(X, \nu) \ominus \C \mathbf 1_X$. 

Whenever $U \subset X$ is a nonnull measurable subset, we define the equivalence relation $\mathscr R|_U = \mathscr R \cap (U \times U)$ on the standard probability space $(U, \nu_U)$ where $\nu_U = \frac{1}{\nu(U)} \nu|_U$. Assume that $\mathscr R$ is ergodic and that $(X, \nu)$ is diffuse. For every $t > 0$, we define the amplification $\mathscr R^t$ as follows. Denote by $c$ the counting measure on $\N$. Choose a measurable subset $X_t \subset X \times \N$ such that $(\nu \otimes c)(X_t) = t$ and set $\nu_t = \frac{1}{(\nu \otimes c)(X_t)} (\nu \otimes c)|_{X_t}$. Define the ergodic equivalence relation $\mathscr R^t$ on the standard probability space $(X_t, \nu_t)$ by declaring $((x, p), (y, q)) \in \mathscr R^t$ if and only if $(x, y) \in \mathscr R$ for $((x, p), (y, q)) \in X_t \times X_t$.
 
For every $n \in \N \cup \{+\infty\}$, define the Borel subset 
$$X_n = \left \{ x \in X \mid \sharp ([x]_{\mathscr R}) = n \right \} \subset X.$$
We say that $\mathscr R$ is (essentially) {\em finite} if $\nu(X_\infty) = 0$. We say that $\mathscr R$ is (essentially) {\em bounded} if there exists $k \in \N$ such that $\nu(X_n) = 0$ for every $n \geq k$.

Let $\mathscr S$ be another equivalence relation defined on a standard probability space $(Y, \eta)$. We say that $\mathscr R$ and $\mathscr S$ are {\em isomorphic} if there exists a pmp Borel isomorphism $\theta : (X, \nu) \to (Y, \eta)$ such that for almost every $(x, y) \in \mathscr R$, we have $(\theta(x), \theta(y)) \in \mathscr S$ and for almost every $(x, y) \in \mathscr S$, we have $(\theta^{-1}(x), \theta^{-1}(y)) \in \mathscr R$. Assuming moreover that $\mathscr R$ and $\mathscr S$ are both ergodic, we say that $\mathscr R$ and $\mathscr S$ are {\em stably isomorphic} if there exist nonnull measurable subsets $U \subset X$ and $V \subset Y$ and a pmp Borel isomorphism $\theta : (U, \nu_U) \to (V, \eta_V)$ such that for almost every $(x, y) \in \mathscr R|_U$, we have $(\theta(x), \theta(y)) \in \mathscr S|_V$ and for almost every $(x, y) \in \mathscr S|_V$, we have $(\theta^{-1}(x), \theta^{-1}(y)) \in \mathscr R|_U$.

Let $\mathscr S \leq \mathscr R$ be a subequivalence relation. For every $n \in \N \cup \{+\infty\}$, define the Borel subset 
$$Y_n = \left \{ x \in X \mid [x]_{\mathscr R} \text{ is the union of } n \; \mathscr S\text{-classes} \right \} \subset X.$$
We say that $\mathscr S \leq \mathscr R$ has (essentially) {\em finite index} if $\nu(Y_\infty) = 0$. We say that $\mathscr S \leq \mathscr R$ has (essentially) {\em bounded index} if there exists $k \in \N$ such that $\nu(Y_n) = 0$ for every $n \geq k$.

Following \cite{FM75}, we denote by $\rL(\mathscr R)$ the tracial von Neumann algebra associated with $\mathscr R$. Then $\rL^\infty(X) \subset \rL(\mathscr R)$ is a Cartan subalgebra. We simply denote by $\left\{u_\theta \mid \theta \in [\mathscr R] \right\} \subset \mathscr U(\rL(\mathscr R))$ the full group of $\mathscr R$ regarded as a subgroup of the unitary group $\mathscr U(\rL(\mathscr R))$. Then we have 
$$\rL(\mathscr R) = \left\{u_\theta \mid \theta \in [\mathscr R] \right\}\dpr \vee \rL^\infty(X).$$

\begin{facts}\label{fact-ergodic}
Keep the same notation as above. We will be using the following useful properties.
\begin{itemize}
    \item [$(\rm i)$] The equivalence relation $\mathscr R$ is ergodic if and only if the Koopman representation $\kappa : [\mathscr R] \to \mathscr U(\rL^2(X,\nu)^0)$ is weakly mixing, where $\rL^2(X, \nu)^0 = \rL^2(X, \nu) \ominus \C \mathbf 1_X$. 

    \item [$(\rm ii)$] If $\mathscr R$ is ergodic and $(X, \nu)$ is diffuse, then $\rL(\mathscr R) = \left\{u_\theta \mid \theta \in [\mathscr R] \right\}\dpr$. 
    
    \item [$(\rm iii)$] Let $\Psi \in \Aut(X, \nu)$ be a pmp Borel automorphism and denote by $\mathscr R_\Psi$ the equivalence relation generated by $\Psi$. Then for any $\varepsilon>0$, there exist a measurable subset $X_0\subset X$ with $\nu(X\setminus X_0)\leq \varepsilon$, a positive integer $p$ and $\Psi_0\in [\mathscr R_\Psi]$ such that $\Psi_0=\Psi$ on $X_0$ and $\Psi_0^p= \id_X$.
\end{itemize}
\end{facts}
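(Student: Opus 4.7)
Plan. I treat the three assertions in sequence.

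For (i), the direction ``weakly mixing $\Rightarrow$ ergodic'' is immediate. For the converse, suppose $\mathscr R$ is ergodic and assume for contradiction that there exists a nonzero finite-dimensional subspace $V \subset \rL^2(X, \nu)^0$ invariant under $\kappa$. Fixing an orthonormal basis $(e_i)_{1 \leq i \leq d}$ of $V$, the Gram kernel
\[
\phi(x, y) = \sum_{i=1}^d e_i(x)\,\overline{e_i(y)} \in \rL^2(X \times X,\,\nu \otimes \nu)
\]
is invariant under the diagonal action of $[\mathscr R]$ on $X \times X$. To reach a contradiction, I would show that $\phi$ is almost-everywhere constant: for any two disjoint nonnull measurable subsets $U_1, U_2 \subset X$, every element of $[\mathscr R|_{U_2}]$ extends to an element of $[\mathscr R]$ by the identity on $U_1$, so $\phi(x, \cdot)|_{U_2}$ is $\mathscr R|_{U_2}$-invariant for a.e.\! $x \in U_1$. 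Ergodicity of the restricted equivalence relation $\mathscr R|_{U_2}$ (a standard consequence of ergodicity of $\mathscr R$) forces $\phi(x, \cdot)|_{U_2}$ to be constant; combining this with the Hermitian symmetry $\phi(y, x) = \overline{\phi(x, y)}$ and varying $U_1, U_2$ shows that $\phi$ is a.e.\! equal to a single scalar $c$. But $c = \int \phi \, \mathrm d(\nu \otimes \nu) = \sum_i \bigl|\int e_i \,\mathrm d\nu\bigr|^2 = 0$ since each $e_i \in \rL^2(X, \nu)^0$, while $\|\phi\|_2^2 = d > 0$, a contradiction.

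For (ii), the inclusion $\supseteq$ is trivial, so it suffices to show $\rL^\infty(X, \nu) \subset N := \{u_\theta \mid \theta \in [\mathscr R]\}''$. Given $\theta \in [\mathscr R]$ and a measurable $A \subset X$, let $\theta_A \in [\mathscr R]$ be the cut-and-paste automorphism equal to $\theta$ on $A$ and to the identity on $X \setminus A$. A direct Feldman--Moore computation gives $u_{\theta_A} = \mathbf 1_A u_\theta + \mathbf 1_{X \setminus A}$, hence
\[
u_{\theta_A} - u_{\theta_B} = (\mathbf 1_A - \mathbf 1_B)(u_\theta - 1) \in N
\]
for any measurable $A, B \subset X$. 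Taking linear combinations and strong-operator limits yields $f(u_\theta - 1) \in N$ for every $f \in \rL^\infty(X, \nu)$ and every $\theta \in [\mathscr R]$. By Dye's theorem, the diffuseness of $(X, \nu)$ together with the ergodicity of $\mathscr R$ ensure the existence of $\theta \in [\mathscr R]$ acting ergodically on $(X, \nu)$; a direct Fourier calculation on $\rL^2(\rL(\mathscr R))$ shows that left multiplication by $u_\theta$ admits no nonzero fixed vectors, so the mean ergodic theorem yields $\frac{1}{n}\sum_{k=0}^{n-1} u_\theta^k \to 0$ in the strong operator topology. Since left multiplication by $f \in \rL^\infty(X, \nu)$ is SOT-continuous on bounded sets and $N$ is SOT-closed, we conclude
\[
-f = \lim_{n \to \infty} \frac{1}{n}\sum_{k=0}^{n-1} f(u_\theta^k - 1) \in N.
\]

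For (iii), the argument is a Rokhlin-tower construction treating the aperiodic and periodic parts of $\Psi$ simultaneously. Decompose $X = X_\infty \sqcup \bigsqcup_{n \geq 1} X_n$, where $X_n = \{x \in X \mid \Psi^n(x) = x \text{ and } \Psi^k(x) \neq x \text{ for } 0 < k < n\}$ is the set of points of exact period $n$ under $\Psi$. Choose $N \in \N$ large enough that $\nu\bigl(\bigcup_{n > N} X_n\bigr) \leq \varepsilon/2$ and set $p = \operatorname{lcm}(1, \dots, N)$, so $\Psi^p = \id$ on $\bigcup_{n \leq N} X_n$. Apply the classical Rokhlin lemma to $\Psi$ on the aperiodic part $X_\infty$ with tower height $p$ and tolerance $\varepsilon/2$ to obtain $E \subset X_\infty$ with $E, \Psi E, \dots, \Psi^{p-1} E$ pairwise disjoint of total measure $\geq \nu(X_\infty) - \varepsilon/2$. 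Define $\Psi_0 \in [\mathscr R_\Psi]$ to agree with $\Psi$ on $\bigcup_{n \leq N} X_n \cup \bigcup_{0 \leq i \leq p - 2} \Psi^i E$, to send $\Psi^{p-1} E$ back to $E$ bijectively (closing up the tower), and to be the identity on the remaining complement, of measure $\leq \varepsilon$. Then $\Psi_0^p = \id_X$ and $\Psi_0 = \Psi$ on a set $X_0$ with $\nu(X \setminus X_0) \leq \varepsilon$.

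I expect the most delicate step to be the mean-ergodic argument in (ii): combining the explicit cut-and-paste identity with the Fourier check that an ergodic $\theta \in [\mathscr R]$ has no fixed vectors for left multiplication on $\rL^2(\rL(\mathscr R))$, and then interpreting the resulting SOT convergence through the bimodule structure, is where most of the work lies.
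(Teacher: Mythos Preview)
Your arguments for $(\rm i)$ and $(\rm iii)$ are essentially sound, though different from the paper's. The paper simply cites \cite[Theorem 11.20]{LM24} for $(\rm i)$, and for $(\rm iii)$ it uses hyperfiniteness of $\mathscr R_\Psi$ to write it as an increasing union of finite subequivalence relations rather than invoking a Rokhlin tower directly. One bookkeeping slip in your $(\rm iii)$: on the top level $\Psi^{p-1}E$ your $\Psi_0$ differs from $\Psi$ (you close the tower), so $\Psi^{p-1}E$ also lies in $X\setminus X_0$. Since $\nu(\Psi^{p-1}E)\le 1/p$, this is harmless once you also require $N$ large enough that $1/p$ is small, but as written the bound $\nu(X\setminus X_0)\le\varepsilon$ is not justified.

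There is a genuine gap in $(\rm ii)$. Your ``cut-and-paste automorphism'' $\theta_A$, defined as $\theta$ on $A$ and the identity on $X\setminus A$, is \emph{not} an element of $[\mathscr R]$ unless $\theta(A)=A$: if $y\in\theta(A)\setminus A$ then both $y$ and $\theta^{-1}(y)\in A$ map to $y$, so the map is not injective. Equivalently, the operator $\mathbf 1_A u_\theta+\mathbf 1_{X\setminus A}$ is a unitary only when $A$ is $\theta$-invariant. Hence your identity $u_{\theta_A}-u_{\theta_B}=(\mathbf 1_A-\mathbf 1_B)(u_\theta-1)\in N$ is unavailable for general measurable $A,B$, and the derivation of $f(u_\theta-1)\in N$ collapses. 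Since your mean-ergodic step needs this for an \emph{ergodic} $\theta$, whose only invariant sets are null or conull, restricting to $\theta$-invariant $A$ gives nothing.

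The paper's proof of $(\rm ii)$ avoids this issue entirely: given a measurable $Y\subset X$ with $\nu(Y)<1$, it picks an ergodic $\rho\in[\mathscr R|_{X\setminus Y}]$ (via a hyperfinite ergodic subequivalence relation of $\mathscr R|_{X\setminus Y}$), defines $\theta_n\in[\mathscr R]$ by $\theta_n=\id$ on $Y$ and $\theta_n=\rho^n$ on $X\setminus Y$, and shows $u_{\theta_n}\to\mathbf 1_Y$ weakly. Here the cut-and-paste is legitimate because $Y$ and $X\setminus Y$ are both invariant under the pieces being glued. This directly exhibits each indicator $\mathbf 1_Y$ as a weak limit of unitaries $u_{\theta_n}\in N$, which is exactly what you need.
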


\begin{proof}
$(\rm i)$ This follows from \cite[Theorem 11.20]{LM24}.

$(\rm ii)$ Indeed, it suffices to prove that $\rL^\infty(X) \subset \left\{u_\theta \mid \theta \in [\mathscr R] \right\}\dpr$. Let $Y \subset X$ be a measurable subset such that $\nu(Y) < 1$. Set $Z = X \setminus Y$ and $\mathscr S = \mathscr R|_Z$. Since $\mathscr S$ is ergodic and $(Z, \nu_Z)$ is diffuse, there exists an ergodic type ${\rm II_1}$ hyperfinite subequivalence relation $\mathscr T \leq \mathscr S$ (see e.g.\! \cite[Proposition 9.3.2]{Zi84}). By \cite{CFW81},  choose a free ergodic pmp action $\Z \curvearrowright (Z, \nu_Z)$ such that $\mathscr T = \mathscr R(\Z \curvearrowright Z)$. Denote by $\rho \in [\mathscr T]$ the generator of $\Z \curvearrowright (Z, \nu_Z)$. For every $n \in \N$, define $\theta_n \in [\mathscr R]$ by $\theta_n(x) = x$ if $x \in Y$ and $\theta_n(x) = \rho^n(x)$ if $x \in Z$. Then it is plain to see that $u_{\theta_n} \to \mathbf 1_Y$ weakly as $n \to \infty$. This shows that $\rL(\mathscr R) = \left\{u_\theta \mid \theta \in [\mathscr R] \right\}\dpr$.

$(\rm iii)$ Since $\mathscr R_\Psi$ is hyperfinite by Rokhlin's lemma, it follows that there exists an increasing sequence of finite equivalence relations $\mathscr T_1\leq \mathscr T_2\leq \dots$ such that  (up to a conull Borel subset) we have $\mathscr R_{\Psi}= \bigcup_{k\geq 1} \mathscr T_k$. Then for $\nu$-almost every $x\in X$, we have $[x]_{\mathscr R_\Psi}=\bigcup_{k\ge 1}[x]_{\mathscr T_k}$, and thus, there exists $k\ge 1$ such that $\Psi(x)\in [x]_{\mathscr T_k}$. Next, define the measurable sets $X_k=\{ x\in X \mid \Psi(x) \in [x]_{\mathscr T_k}  \}$, $k\ge 1$ and note that $X=\bigcup_{k\ge 1} X_k$. Take $N\ge 1$ such that $\nu(X\setminus X_N)\leq \varepsilon$ and let $\Psi_0\in [\mathscr T_N]\leq [\mathscr R_{\Psi}]$ be such that $\Psi_0=\Psi$ on $X_N$. By construction, it follows that the equivalence relation generated by $\Psi_0$ is finite, hence, the lemma follows by letting $X_0=X_N$. 
\end{proof}

\subsection{Popa's intertwining theory for equivalence relations}

We review Popa's criterion for intertwining von Neumann subalgebras \cite{Po01, Po03}. Let $(M, \tau)$ be a tracial von Neumann algebra and $A\subset 1_A M 1_A$, $B \subset 1_B M 1_B$ be von Neumann subalgebras. Let $\mathcal G\subset \mathscr U(A)$ be a subgroup such that $\mathscr G\dpr=A.$ By \cite[Corollary 2.3]{Po03} and \cite[Theorem A.1]{Po01} (see also \cite[Proposition C.1]{Va06}), the following conditions are equivalent:

\begin{itemize}
\item [$(\rm i)$] There exist $d \geq 1$, a projection $q \in \rM_d(B)$, a nonzero partial isometry $v \in \rM_{1, d}(1_A M)q$ and a unital normal $\ast$-homomorphism $\pi : A \to q\rM_d(B)q$  such that $a v = v \pi(a)$ for all $a \in A$.

\item [$(\rm ii)$] There is no net of unitaries $(w_n)_n$ in $\mathscr G$ such that
$$\forall x, y \in 1_A M 1_B, \quad \lim_n \|\rE_B(x^* w_n y)\|_2 = 0.$$

\end{itemize}
If one of the previous equivalent conditions is satisfied, we say that $A$ {\it embeds into} $B$ {\it inside} $M$ and write $A \preceq_M B$.
Moreover, if  the inclusion $B\subset M$ is unital, then the above are also equivalent to:
\begin{itemize}
    \item [$(\rm iii)$]  The unitary representation $\pi: \mathscr G\to \mathscr U(\rL^2(\langle M, e_B\rangle, \Tr))$ given by $\pi_u (\xi)= u \xi u^*$ for $u \in \mathscr G$, $\xi \in \rL^2(\langle M, e_B\rangle, \Tr)$, is not weakly mixing. Here, we denoted by ${\rm Tr}$ the associated semifinite trace on Jones'\! basic construction $\langle M, e_B\rangle$.
\end{itemize}

If for every nonzero projection $p \in A' \cap 1_A M 1_A$ we have $Ap \preceq_M B$, then we write $A \preceq^s_M B$.

Popa's criterion for intertwining von Neumann subalgebras was adapted to the setting of equivalence relations by Ioana \cite{Io11}. Let $\mathscr R$ be an equivalence relation defined on a standard probability space $(X, \nu)$. Let $Y, Z \subset X$ be nonnull measurable subsets and $\mathscr S \leq \mathscr R|_Y$ and $\mathscr T \leq \mathscr R|_Z$ be subequivalence relations. Following \cite{IKT08}, define the map 
$$\varphi_{\mathscr T} : [[\mathscr R]] \to [0, 1] : \theta \mapsto \nu(\{x \in \dom(\theta) \mid (\theta(x), x) \in \mathscr T \}).$$
By \cite[Lemma 1.7]{Io11} (see also \cite[Lemma 3.1]{Sp21}), the following conditions are equivalent:

\begin{itemize}
\item [$(\rm i)$] There exist a $\mathscr S$-invariant nonnull measurable subset $Y_0 \subset Y$ and a subequivalence relation $\mathscr S_0 \leq \mathscr S$ such that for any nonnull measurable subset $Y_1 \subset Y_0$, there is a nonnull measurable subset $U \subset Y_1$ and $\theta \in [[\mathscr R]]$ with $\theta : U \to V$, such that 
\begin{itemize}
\item $\mathscr S_0|_U \leq \mathscr S|_U$ has bounded index, and

\item  $(\theta \times \theta)(\mathscr S_0|_U) \leq \mathscr T|_V$.
\end{itemize}

\item [$(\rm ii)$] There is no sequence $(\theta_n)_n$ in $[\mathscr S]$ such that
$$\forall \psi, \rho \in [[\mathscr R]], \quad \lim_n \varphi_{\mathscr T}(\psi \theta_n \rho) = 0.$$
\end{itemize}
If one of the previous equivalent conditions is satisfied, we say that $\mathscr S$ {\it embeds into} $\mathscr T$ {\it inside} $\mathscr R$ and write $\mathscr S \preceq_{\mathscr R} \mathscr T$.

\begin{facts}\label{fact-equivalence}
Keep the same notation as above. We will be using the following useful properties. 
\begin{itemize}
\item [$(\rm i)$] We have $\mathscr S \preceq_{\mathscr R} \mathscr T$ if and only $\rL(\mathscr S) \preceq_{\rL(\mathscr R)} \rL(\mathscr T)$. 

\item [$(\rm ii)$] If $\mathscr S_0 \leq \mathscr S$ is a subequivalence relation and $\mathscr S \preceq_{\mathscr R} \mathscr T$, then we have $\mathscr S_0 \preceq_{\mathscr R} \mathscr T$.
\end{itemize}

For every $i \in \{1, 2\}$, let $(Z_i,\nu_i)$ be a standard probability space such that $(X,\nu)\cong (Z_1\times Z_2, \nu_1\otimes\nu_2)$. Let $Z \subset Z_1$ be a nonnull measurable subset and consider $\nu_Z = \frac{1}{\nu_1(Z)}\nu_1|_Z$. Let $\mathscr S$ be an ergodic equivalence relation on $(Z, \nu_Z)$ such that $\mathscr S\times \id_{Z_2}\leq \mathscr R|_{Z \times Z_2}$. Denote by $\kappa : [\mathscr S] \to \mathscr U(\rL^2(Z,\nu_Z)^0)$ the Koopman representation. Let $\mathscr T \leq \mathscr R$ be a  subequivalence relation.

\begin{itemize}
\item [$(\rm iii)$] Assume that $\mathscr S \times \id_{Z_2} \npreceq_{\mathscr R}  \mathscr T$. Then for all $\varepsilon>0$, $n \geq 1$, $f_1,\dots, f_n\in \rL^2(Z,\nu_Z)^{0}$ and $\psi_1,\dots,\psi_n\in [\mathscr R]$, there exists $\theta\in [\mathscr S]$ such that $|\langle \kappa_{\theta}(f_i),f_j \rangle|\leq\varepsilon$ and $\varphi_{\mathscr T}(\psi_i (\theta\times \id_{Z_2}) \psi_j)\leq \varepsilon$ for all $1\leq i,j\leq n$. 
\end{itemize}

For every $i \in \{1, 2\}$, let $\mathscr R_i$ be an ergodic equivalence relation on $(X_i, \nu_i)$ and $\mathscr S_i$ an ergodic equivalence relation on $(Y_i, \eta_i)$. Assume that $\mathscr R_1 \times \mathscr S_1 \cong \mathscr R_2 \times \mathscr S_2$. Set $(Z, \zeta) = (X_1 \times Y_1, \nu_1 \otimes \eta_1) \cong (X_2 \times Y_2, \nu_2 \otimes \eta_2)$ and $\mathscr R = \mathscr R_1 \times \mathscr S_1 \cong \mathscr R_2 \times \mathscr S_2$. We simply write $\mathscr R_1 \preceq_{\mathscr R} \mathscr R_2$ as subequivalence relations if $\mathscr R_1 \times \id_{Y_1} \preceq_{\mathscr R} \mathscr R_2 \times \id_{Y_2}$.

\begin{itemize}
\item  [$(\rm iv)$] If $\mathscr S_1 \preceq_{\mathscr R} \mathscr S_2$ as subequivalence relations, then we have $$\rL^\infty(X_2) \preceq_{\rL(\mathscr R)} \rL^\infty(X_1).$$

\item [$(\rm v)$] If $\mathscr T_1 \leq \mathscr S_1$ is an ergodic subequivalence relation and $\mathscr T_1 \preceq_{\mathscr R} \mathscr S_2$ as subequivalence relations, then we have $\mathscr S_1 \preceq_{\mathscr R} \mathscr S_2$ as subequivalence relations. 

\item [$(\rm vi)$] If $\mathscr R_1 \preceq_{\mathscr R} \mathscr R_2$ as subequivalence relations, then there exists an ergodic equivalence relation $\mathscr V$ such that $\mathscr R_1 \times \mathscr V$ and $\mathscr R_2$ are stably isomorphic.
\end{itemize}
\end{facts}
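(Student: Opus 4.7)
All six items are near-standard consequences of Ioana's intertwining criterion \cite{Io11} combined with Popa's original criteria \cite{Po01,Po03}. The plan is to handle them in order: (i) is the Feldman--Moore dictionary, (ii) and (iii) are direct applications of Popa's criteria, (iv) uses a commutant intertwining argument with descent to the Cartan, (v) uses ergodicity of $\mathscr T_1$ to extend the intertwiner, and (vi) invokes the structural result of \cite{Sp21}.

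For (i), the bridge is the identity $\varphi_{\mathscr T}(\theta)=\tau(u_\theta e_{\rL(\mathscr T)}u_\theta^*)=\|\rE_{\rL(\mathscr T)}(u_\theta)\|_2^2$ for $\theta\in[\mathscr R]$, which identifies Ioana's sequence obstruction with Popa's condition \textup{(ii)} applied to $\mathscr G=\{u_\theta:\theta\in[\mathscr S]\}\subset\mathscr U(\rL(\mathscr S))$; the latter generates $\rL(\mathscr S)$ by Fact~\ref{fact-ergodic}\textup{(ii)}. For (ii), apply Popa's characterization \textup{(i)}: the partial isometry $v$ and unital $\ast$-morphism $\pi:\rL(\mathscr S)\to q\rM_d(\rL(\mathscr T))q$ witnessing $\rL(\mathscr S)\preceq_{\rL(\mathscr R)}\rL(\mathscr T)$ restrict to the subalgebra $\rL(\mathscr S_0)\subset\rL(\mathscr S)$, giving $\rL(\mathscr S_0)\preceq_{\rL(\mathscr R)}\rL(\mathscr T)$. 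For (iii), combine two weakly mixing representations of $[\mathscr S]$: the Koopman representation on $\rL^2(Z,\nu_Z)^0$, weakly mixing by Fact~\ref{fact-ergodic}\textup{(i)}; and the conjugation representation $\theta\mapsto\Ad(u_{\theta\times\id_{Z_2}})$ on $\rL^2(\langle\rL(\mathscr R),e_{\rL(\mathscr T)}\rangle,\Tr)$, weakly mixing by Popa's criterion \textup{(iii)} applied to the non-embedding $\rL(\mathscr S)\ovt\rL^\infty(Z_2)\npreceq_{\rL(\mathscr R)}\rL(\mathscr T)$. Recognizing $\varphi_{\mathscr T}(\psi_i(\theta\times\id_{Z_2})\psi_j)$ as a matrix coefficient of the second representation against the vectors $u_{\psi_j}e_{\rL(\mathscr T)}u_{\psi_j}^*$, simultaneous weak mixing of the direct sum yields a single $\theta\in[\mathscr S]$ controlling all prescribed matrix coefficients.

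For (iv), apply a commutant intertwining argument: ergodicity of $\mathscr S_i$ gives $\rL(\mathscr S_i)'\cap\rL(\mathscr R)=\rL(\mathscr R_i)$, and the hypothesis $\rL(\mathscr S_1)\preceq_{\rL(\mathscr R)}\rL(\mathscr S_2)$, combined with the regularity of $\rL(\mathscr S_2)$ inside $\rL(\mathscr R)$, dualizes via Popa's machinery to $\rL(\mathscr R_2)\preceq_{\rL(\mathscr R)}\rL(\mathscr R_1)$; descent to the Cartan subalgebras $\rL^\infty(X_i)\subset\rL(\mathscr R_i)$, using the uniqueness features of the Cartan inclusion under Popa's intertwining, yields $\rL^\infty(X_2)\preceq_{\rL(\mathscr R)}\rL^\infty(X_1)$. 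For (v), ergodicity of $\mathscr T_1$ makes $\rL(\mathscr T_1)$ a type II$_1$ factor, so the intertwining partial isometry from $\rL(\mathscr T_1)$ into $\rL(\mathscr S_2)$ extends, through the shared Cartan $\rL^\infty(Y_1)\subset\rL(\mathscr T_1)\subset\rL(\mathscr S_1)$ and the full group $[\mathscr S_1]$ normalizing it, to an intertwiner of $\rL(\mathscr S_1)$ into $\rL(\mathscr S_2)$. For (vi), invoke \cite{Sp21}: a subequivalence embedding inside a tensor decomposition of $\mathscr R$ produces an ergodic ``quotient'' equivalence relation $\mathscr V$ such that $\mathscr R_1\times\mathscr V$ and $\mathscr R_2$ are stably isomorphic.

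The main obstacle I anticipate is (v): the extension of the partial intertwiner from $\rL(\mathscr T_1)$ up to $\rL(\mathscr S_1)$ is delicate and truly requires both factoriality (hence ergodicity of $\mathscr T_1$) and the shared Cartan structure. The next subtlest point is (iv), where Popa's commutant intertwining duality and the descent to Cartan subalgebras both require care. Items (i), (ii), (iii), and (vi) are essentially routine translations of material in \cite{Io11,Po01,Po03,Sp21}.
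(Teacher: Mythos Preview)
Your treatment of (i), (ii), and (vi) matches the paper (which cites \cite[Lemma 1.8]{Io11}, calls (ii) obvious, and cites \cite[Proposition 3.6]{Sp21}, respectively). Your approach to (iii) is also the paper's, but you skip a step: the group $\{u_{\theta\times\id_{Z_2}}:\theta\in[\mathscr S]\}$ generates $\rL(\mathscr S)\ovt\C 1_{Z_2}$, \emph{not} $\rL(\mathscr S)\ovt\rL^\infty(Z_2)$, so Popa's criterion (iii) gives weak mixing of the conjugation representation only once you know $\rL(\mathscr S)\npreceq_{\rL(\mathscr R)}\rL(\mathscr T)$. This does not follow formally from $\rL(\mathscr S)\ovt\rL^\infty(Z_2)\npreceq_{\rL(\mathscr R)}\rL(\mathscr T)$ (non-embedding of a larger algebra does not pass to subalgebras). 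The paper fills this by an absorption trick: the witnessing unitaries $v_nw_n$ with $w_n\in\mathscr U(\rL^\infty(Z_2))$ can have their $w_n$-factor absorbed into $\rL(\mathscr T)$ because $u_\theta^* w_n u_\theta\in\rL^\infty(X)\subset\rL(\mathscr T)$, yielding $\rL(\mathscr S)\npreceq\rL(\mathscr T)$ directly.

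The real gap is in (v). Your proposed mechanism---``the intertwining partial isometry extends through the shared Cartan and the normalizing full group $[\mathscr S_1]$''---is not an argument. An intertwiner $(v,\pi)$ for $\rL^\infty(X_1)\ovt\rL(\mathscr T_1)$ gives you no control over how elements of $[\mathscr S_1]\setminus[\mathscr T_1]$ interact with $v$; there is no reason $u_\theta v$ should lie in $v\,\rM_d(\rL^\infty(X_2)\ovt\rL(\mathscr S_2))$ for $\theta\in[\mathscr S_1]$. The paper takes a genuinely different route, a two-step duality via \cite[Lemma 3.5]{Va07}: from $\rL^\infty(X_1)\ovt\rL(\mathscr T_1)\preceq_{\rL(\mathscr R)}\rL^\infty(X_2)\ovt\rL(\mathscr S_2)$ one first passes to the commutants to obtain $\rL^\infty(X_2)\preceq_{\rL(\mathscr R)}\rL^\infty(X_1)$ (this is where ergodicity of $\mathscr T_1$ enters, to ensure the relevant relative commutant is exactly $\rL^\infty(X_1)$); then one applies \cite[Lemma 3.5]{Va07} again in the reverse direction to lift back up to $\rL^\infty(X_1)\ovt\rL(\mathscr S_1)\preceq_{\rL(\mathscr R)}\rL^\infty(X_2)\ovt\rL(\mathscr S_2)$. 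Your sketch for (iv) is in the spirit of the first of these two steps, but the paper simply cites \cite[Corollary 3.4]{Sp21}; in any case the ``descent to Cartan'' you invoke for (iv) is precisely the Vaes lemma, and you should recognize that (v) is solved by the same lemma applied twice rather than by any direct extension of the intertwiner.
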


\begin{proof}
$(\rm i)$ This follows from \cite[Lemma 1.8]{Io11}.

$(\rm ii)$ It is obvious from the definition.

$(\rm iii)$ Since $\mathscr S$ is ergodic, it follows by Facts \ref{fact-ergodic}$(\rm i), (\rm ii)$ that $\rL(\mathscr S) = [\mathscr S]\dpr$ and that the Koopman representation $\kappa : [\mathscr S] \to \mathscr U(\rL^2(Z,\nu_Z)^0)$ is weakly mixing. Denote by $\left\{u_\theta \mid \theta \in [\mathscr R] \right\} \subset \mathscr U(\rL(\mathscr R))$ the full group of $\mathscr R$ regarded as a subgroup of the unitary group $\mathscr U(\rL(\mathscr R))$.
By assumption, we have $\rL(\mathscr S) \ovt \rL^\infty(Z_2) \npreceq_{\rL(\mathscr R)} \rL(\mathscr T)$, which implies that there exist two sequences of unitaries $(v_n)_{n\ge1}\subset \mathscr U(\rL (\mathscr S))$ and $(w_n)_{n\ge 1}\subset \mathscr U(\rL^\infty(Z_2))$ such that $\| \rE_{\rL(\mathscr T)} (x v_n w_n u_\theta) \|_2\to 0$, for all $x\in \rL(\mathscr R)$ and $\theta\in [\mathscr R]$. Since $u_\theta^* w_nu_\theta = w_n\circ \theta \in \mathscr U(\rL^\infty (X))$, for any $n\ge 1$, and $\rL^\infty(X)\subset \rL(\mathscr T)$, it follows that $\| \rE_{\rL(\mathscr T)} (x v_n a u_\theta) \|_2\to 0$, for all $x\in \rL(\mathscr R)$, $a\in \rL^\infty(X)$ and $\theta\in [\mathscr R]$. 
This implies that $\rL(\mathscr S) \npreceq_{\rL(\mathscr R)} \rL(\mathscr T)$. This further implies that the unitary representation $\pi: [\mathscr S]\to \mathscr U(\rL^2(\langle \rL(\mathscr R), e_{\rL(\mathscr T)}\rangle , \Tr))$ given by $\pi_u (\xi)= u \xi u^*$ for $u \in [\mathscr S]$, $\xi \in \rL^2(\langle \rL(\mathscr R), e_{\rL(\mathscr T)}\rangle , \Tr)$, is weakly mixing. Since $\|\rE_{\rL(\mathscr T)}(u_{\theta})\|_2^2=\varphi_{\mathscr T}(\theta)$ for any $\theta\in [\mathscr R]$ and since the direct sum $\kappa\oplus \pi$ is a weakly mixing unitary representation of $[\mathscr S]$ as well, the conclusion follows.

$(\rm iv)$ This follows from \cite[Corollary 3.4]{Sp21}

$(\rm v)$ If $\mathscr T_1 \leq \mathscr S_1$ is an ergodic subequivalence relation and $\mathscr T_1 \preceq_{\mathscr R} \mathscr S_2$ as subequivalence relations, then we have $\rL^\infty(X_1) \ovt \rL(\mathscr T_1) \preceq_{\rL(\mathscr R)} \rL^\infty(X_2) \ovt \rL(\mathscr S_2)$ and so $\rL^\infty(X_2) \preceq_{\rL(\mathscr R)} \rL^\infty(X_1)$ by \cite[Lemma 3.5]{Va07}. By applying again \cite[Lemma 3.5]{Va07}, we infer that $\rL^\infty(X_1) \ovt \rL(\mathscr S_1) \preceq_{\rL(\mathscr R)} \rL^\infty(X_2) \ovt \rL(\mathscr S_2)$ and so $\mathscr S_1 \preceq_{\mathscr R} \mathscr S_2$ as subequivalence relations.

$(\rm vi)$ This follows from \cite[Proposition 3.6]{Sp21}.
\end{proof}

The following result and its proof are inspired by \cite[Corollary F.14]{BO08}. It will turn out to be useful in the proof of Theorem \ref{main-theorem-decomposition}.

\begin{lem}\label{lem-key-intertwining}
Let $\mathscr R$ be an equivalence relation on $(X, \nu)$. For every $i\in\{1,2\}$, let $(Z_i,\nu_i)$ be a standard probability space such that $(X,\nu)\cong (Z_1\times Z_2, \nu_1\otimes\nu_2)$. Let $\mathscr S$ be an ergodic equivalence relation on $(Z_1, \nu_1)$ such that $\mathscr S\times \id_{Z_2}\leq \mathscr R$ and $\mathscr T \leq \mathscr R$ a subequivalence relation. 

If $\mathscr S \times \id_{Z_2} \npreceq_{\mathscr R}  \mathscr T$, then there exists a hyperfinite ergodic subequivalence relation $\mathscr S_0 \leq \mathscr S$ such that  $\mathscr S_0 \times \id_{Z_2} \npreceq_{\mathscr R}  \mathscr T$. 
\end{lem}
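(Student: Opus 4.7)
The plan is to adapt the strategy of Corollary F.14 of Brown--Ozawa \cite{BO08} to the equivalence-relation framework: the hypothesis provides an abundance of mixing-like elements in $[\mathscr S]$ via Fact \ref{fact-equivalence}(iii), and I use these to assemble a hyperfinite ergodic subequivalence relation $\mathscr S_0 \leq \mathscr S$ inheriting the non-intertwining property.

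First, I fix countable dense subsets $\{f_k\}_{k \geq 1} \subset \rL^2(Z_1, \nu_1)^0$ and $\{\psi_k\}_{k \geq 1} \subset [\mathscr R]$ (in the uniform metric). A diagonal extraction using Fact \ref{fact-equivalence}(iii) applied with $Z = Z_1$ produces a sequence $(\theta_n)_{n \geq 1}$ in $[\mathscr S]$ with
\[
|\langle \kappa_{\theta_n}(f_k), f_\ell \rangle| \leq 2^{-n} \quad \text{and} \quad \varphi_{\mathscr T}(\psi_k (\theta_n \times \id_{Z_2}) \psi_\ell) \leq 2^{-n}
\]
for all $k, \ell \leq n$. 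Using Fact \ref{fact-ergodic}(iii), I replace each $\theta_n$ by a finite-order element $\Psi_n \in [\mathscr S]$ agreeing with $\theta_n$ off a set of measure at most $4^{-n}$; since both the Koopman inner products and $\varphi_{\mathscr T}$ are $1$-Lipschitz in the uniform metric on $[\mathscr R]$, the above estimates survive with additive errors of order $4^{-n}$.

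Next, I build $\mathscr S_0 = \bigcup_n \mathscr T_n$ as an increasing union of finite subequivalence relations of $\mathscr S$ with $\Psi_n \in [\mathscr T_n]$. At stage $n$, I adjust $\Psi_n$ on a further set of measure at most $2^{-n}$, via a Rokhlin-tower argument (again invoking Fact \ref{fact-ergodic}(iii)) relative to the finite partition induced by $\mathscr T_{n-1}$, so that the equivalence relation generated by $\mathscr T_{n-1}$ together with the adjusted $\Psi_n$ still has all classes finite. To secure ergodicity, I additionally arrange that $\mathscr T_n$ contains the $n$-th term $\mathscr H_n$ of a fixed filtration by finite equivalence relations whose union is a prescribed hyperfinite ergodic subequivalence relation $\mathscr H \leq \mathscr S$ (the latter exists by \cite[Proposition 9.3.2]{Zi84} as used in the proof of Facts \ref{fact-ergodic}). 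The resulting $\mathscr S_0$ is hyperfinite by construction and contains $\mathscr H$, hence is ergodic.

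Finally, by density of $\{\psi_k\}_k$ and continuity of $\varphi_{\mathscr T}$, the sequence $(\Psi_n) \subset [\mathscr S_0]$ satisfies $\varphi_{\mathscr T}(\psi (\Psi_n \times \id_{Z_2}) \rho) \to 0$ for every $\psi, \rho \in [[\mathscr R]]$ (partial isomorphisms being extendable to elements of $[\mathscr R]$ as noted in Section \ref{preliminaries}), which by the defining Popa-type criterion yields $\mathscr S_0 \times \id_{Z_2} \npreceq_{\mathscr R} \mathscr T$. The main obstacle is the inductive step: the finite-order $\Psi_n$ must simultaneously carry enough mixing content to witness non-intertwining at stage $n$ and be adjustable (without destroying this mixing) so as to sit compatibly inside a finite extension of $\mathscr T_{n-1}$ that also absorbs $\mathscr H_n$. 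The Rokhlin-tower modification from Fact \ref{fact-ergodic}(iii), together with the summable perturbation budgets $4^{-n}$, is precisely what allows this balance to be achieved.
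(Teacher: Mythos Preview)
Your overall plan follows the paper's (and Brown--Ozawa's) strategy, but the inductive step has a real gap. The assertion that you can ``adjust $\Psi_n$ on a set of measure at most $2^{-n}$'' so that $\mathscr T_{n-1}\vee\langle\Psi_n\rangle$ is finite is not justified, and in general it fails: two bounded equivalence relations can have an unbounded join, and a small uniform-metric perturbation of one generator does not cure this. Fact~\ref{fact-ergodic}(iii) only approximates a single automorphism by a periodic element of its \emph{own} orbit relation $\mathscr R_{\Psi_n}$; it says nothing about compatibility with a prescribed finite relation $\mathscr T_{n-1}$. The same issue undermines your plan to absorb $\mathscr H_n$: since $\mathscr H_n$ is fixed in advance, there is no reason for $\mathscr T_{n-1}\vee\mathscr H_n$ to be bounded, and you cannot perturb $\mathscr H_n$.

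The paper sidesteps this by constructing $\theta_n$ compatibly from the outset rather than producing it globally and then repairing. It decomposes $Z_1$ into $\mathscr S_{n-1}$-invariant pieces $X_k$ of constant type ${\rm I}_{t_k}$, passes to fundamental domains $Y_k$, applies Fact~\ref{fact-equivalence}(iii) to $\mathscr S|_{Y_k}$ (not to $\mathscr S$) to obtain the mixing element there, uses Fact~\ref{fact-ergodic}(iii) to make it periodic on $Y_k$, and then \emph{extends} it to $X_k$ by conjugating along the generator $\omega_k$ of $\mathscr S_{n-1}|_{X_k}$. This forces $\theta_n|_{X_k}$ to commute with $\omega_k$, which is exactly what guarantees that $\mathscr S_n=\mathscr S_{n-1}\vee\langle\theta_n\rangle$ is bounded. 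Ergodicity of $\mathscr S_0$ is then read off directly from the Koopman estimates you already record (via Fact~\ref{fact-ergodic}(i)), so no auxiliary $\mathscr H$ is needed.
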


\begin{proof}  Choose a countable dense subset $\{g_n \mid n \geq 1\}$ in $[\mathscr R]$ with respect to the uniform metric $d_u$. Let $(f_n)_{n\ge 1}$ be a dense sequence in $\rL^2(Z_1,\nu_1)^{0}$.  
For any measurable subset $Z\subset Z_1$, we set $\nu_Z=\frac{1}{\nu_1(Z)}\nu_1|_{Z}$ and we let $\kappa^{Z}: [\mathscr S|_{Z}]\to \mathscr U(\rL^2(Z,\nu_Z)^{0})$ be the associated Koopman representation. The proof of the lemma will be concluded by constructing an increasing sequence $(\mathscr S_n)_n$ of bounded subequivalence relations of $\mathscr S$ and a sequence $(\theta_n)_n$ in $[\mathscr S]$ such that for all $n \geq 1$ and all $1\leq i,j\leq n$, we have $\theta_n \in [\mathscr S_n]$ and
\begin{equation*}
\varphi_{\mathscr T}(g_i (\theta_n\times \id_{Z_2}) g_j)\leq \frac{1}{n} \quad \text{and} \quad |\langle \kappa_{\theta_n}(f_i),f_j  \rangle|\leq \frac{1}{n}.
\end{equation*}

Let $\mathscr S_1 \leq \mathscr S$ be the trivial subequivalence relation, $\theta_1 = \id_{Z_1}$ and assume that $\mathscr S_1, \dots, \mathscr S_{n-1}$ and $\theta_1,\dots,\theta_{n-1}$ have been constructed. 
Since $\mathscr S_{n-1}$ is bounded, there exists a finite measurable partition $Z_1=X_1 \sqcup \dots \sqcup X_{N}$ into $\mathscr S_{n-1}$-invariant sets such that $\mathscr S_{n-1}|_{X_{k}}$ is of type ${\rm I}_{t_k}$ for any $1\leq k\leq N$. For any $1\leq k\leq N$, let $Y_k\subset X_k$ be a fundamental domain for $\mathscr S_{n-1}|_{X_k}$ and let $\omega_k\in[\mathscr S_{n-1}|_{X_k}]$ such that $X_k=\sqcup_{r=0}^{t_k-1}\omega_k^r Y_k$.

Let $1\leq k\leq N$. Since $\mathscr S|_{Y_k} \times \id_{Z_2} \npreceq_{\mathscr R}  \mathscr T$, Facts \ref{fact-equivalence}$(\rm iii)$ implies that there exists $\Psi_k\in [\mathscr S|_{Y_k}]$ such that for all $0\leq r\leq t_k-1$ and $1\leq i,j\leq n$, we have
\begin{equation}\label{erg1}
\varphi_{\mathscr T}(g_i  (\omega_k^r\Psi_k \omega_k^{-r}\times \id_{Z_2})g_j)\leq \frac{1}{2n t_k N} \quad     
\end{equation}
and 
\begin{equation}\label{erg111}
\quad |\langle \kappa^{Y_k}_{\Psi_k}((f_i\circ \omega_k^r)|_{Y_k}),(f_j\circ\omega_k^r)|_{Y_k}  \rangle|\leq \frac{1}{n}. 
\end{equation}
Next, by applying Facts \ref{fact-ergodic}$(\rm iii)$, there exist a measurable subset $\tilde Y_k\subset Y_k$ with $\nu_1(Y_k\setminus \tilde Y_k)\leq \frac{1}{2n t_k N}$, a positive integer $p_k$ and $\tilde\Psi_k\in [\mathscr S|_{Y_k}]$ such that $\tilde \Psi_k=\Psi_k$ on $\tilde Y_k$ and $\tilde \Psi_k^{p_k}=\id_{Y_k}$. Fix $0\leq r\leq t_{k-1}$. Since $g_i (\omega_k^r\tilde\Psi_k \omega_k^{-r}\times \id_{Z_2})g_j=g_i (\omega_k^r\Psi_k \omega_k^{-r}\times \id_{Z_2} )g_j$ on $g_j^{-1} (\omega_k^r \tilde Y_k\times Z_2)$, it follows that for all  $1\leq i,j\leq n$, we have
\begin{align*}
& \varphi_{\mathscr T}(g_i (\omega_k^r\tilde\Psi_k \omega_k^{-r} \times \id_{Z_2}) g_j) \leq \nu((Y_k\setminus \tilde Y_k)\times Z_2)+\\ 
& \nu(\{ x \in g_j^{-1} (\tilde Y_k \times Z_2)\mid  ((g_i (\omega_k^r\Psi_k \omega_k^{-r}\times \id_{Z_2}) g_j)(x),x) \in \mathscr T \}),
\end{align*}
and thus, using \eqref{erg1} we have
\begin{equation}\label{erg112}
 \varphi_{\mathscr T}(g_i (\omega_k^r\tilde\Psi_k \omega_k^{-r} \times \id_{Z_2}) g_j)\leq\frac{1}{nt_kN}.
\end{equation}

Next, since $Y_k\subset X_k$ is a fundamental domain for $\mathscr S_{n-1}|_{X_k}$, we can extend $\tilde\Psi_k\in [\mathscr S|_{Y_k}]$ naturally to an element $\Omega_k\in [\mathscr S|_{X_k}]$. More precisely, we let $\Omega_k(\omega_k^r y)= \omega_k^r \tilde\Psi_k(y)$, for all $y\in Y_k$ and $0\leq r\leq t_k-1$.
In this way, since $\nu_{X_k}(Y_k)=\frac{1}{t_k}$, for all $1\leq i,j\leq n$, we deduce from \eqref{erg112} and \eqref{erg111} that
\begin{equation}\label{erg2}
\varphi_{\mathscr T}(g_i (\Omega_k\times \id_{Z_2}) g_j)\leq \frac{1}{n N} \quad \text{and} \quad |\langle \kappa^{X_k}_{\Omega_k}(f_i|_{X_k}),f_j|_{X_k}  \rangle|\leq \frac{1}{n}.    
\end{equation}
Indeed, note that \eqref{erg112} implies that
\begin{align*}
& \varphi_{\mathscr T}(g_i (\Omega_k\times \id_{Z_2}) g_j) \\
&=\sum_{r=0}^{t_k-1} \nu(\{ x \in g_j^{-1} (\omega_k^r Y_k \times Z_2)\mid ((g_i (\Omega_k\times \id_{Z_2}) g_j)(x),x) \in \mathscr T \}) \\
&=\sum_{r=0}^{t_k-1} \nu(\{ x \in g_j^{-1} (\omega_k^r Y_k \times Z_2)\mid ((g_i (\omega_k^r\tilde\Psi_k \omega_k^{-r}\times \id_{Z_2}) g_j)(x),x) \in \mathscr T \}) \\
&\leq \sum_{r=0}^{t_k-1}\varphi_{\mathscr T}(g_i (\omega_k^r\tilde\Psi_k \omega_k^{-r} \times \id_{Z_2}) g_j)\leq\frac{1}{nN}.
\end{align*}
The second part of \eqref{erg2} follows in a similar way and we omit the details.

Next, define $\theta_n\in \Aut(Z_1, \nu_1)$ by $\theta_n= \Omega_k$ on $X_k$ for any $1\leq k\leq N$. By construction, we deduce that $\theta_n\in [\mathscr S]$ and that there exists an integer $p$ (e.g.\! take $p=p_1 \cdots p_N$) for which $\theta_n^p= \id_{Z_1}$. Note that for all $1\leq i,j\leq n$, it follows from \eqref{erg2} that 
\begin{align*}
& \varphi_{\mathscr T}(g_i (\theta_n\times \id_{Z_2}) g_j) \\
&=\nu(\bigcup_{k=1}^{N} \{ x \in g_j^{-1} (X_k\times Z_2) \mid ((g_i (\theta_n\times \id_{Z_2}) g_j)(x),x) \in \mathscr T \} )\\
&\leq \sum_{k=1}^N \nu(\{ x \in g_j^{-1} (X_k \times Z_2)\mid ((g_i (\Omega_k \times \id_{Z_2})g_j)(x),x) \in \mathscr T \})\\
&=\sum_{k=1}^N  \varphi_{\mathscr T}(g_i (\Omega_k\times \id_{Z_2}) g_j)\leq \sum_{k=1}^N \frac{1}{n N} = \frac{1}{n}.
\end{align*}
We derive in a similar way using \eqref{erg2} that
for all $1\leq i,j\leq n$, we have
\begin{equation}\label{erg3}
|\langle \kappa^{Z_1}_{\theta_n}(f_i),f_j \rangle|\leq \frac{1}{n}.  
\end{equation}

We can now define $\mathscr S_n$ as the subequivalence relation of $\mathscr S$ generated by $\mathscr S_{n-1}$ and $\theta_n$. Since $\theta_n |_{X_k}$ commutes with $\omega_k$ for any $1\leq k\leq N$, one can deduce that
$\mathscr S_n$ is bounded. By letting $\mathscr S_0=\bigcup_{k\ge 1}\mathscr S_k$, we derive that $\mathscr S_0$ is hyperfinite and that $\mathscr S_0 \times \id_{Z_2}\npreceq_{\mathscr R}  \mathscr T$. By \eqref{erg3}, the Koopman representation associated with $[\mathscr S_0]$ is weakly mixing. This implies by Facts \ref{fact-ergodic}(i) that $\mathscr S_0$ is ergodic.
\end{proof}

The notion of strong primeness for type ${\rm II_1}$ factors was introduced by Isono \cite{Is16}. We adapt this notion to the setting of ergodic equivalence relations.

\begin{defn}\label{defn-strong-primeness}
Let $\mathscr R$ be an ergodic equivalence relation. We say that $\mathscr R$ is {\em strongly prime} when for any ergodic equivalence relations $\mathscr S$, $\mathscr T_1$, $\mathscr T_2$ for which $\mathscr R \times \mathscr S \cong \mathscr T_1 \times \mathscr T_2$, there exists $i \in \{1, 2\}$ such that we have $\mathscr T_i \preceq_{\mathscr R \times \mathscr S} \mathscr S$ as subequivalence relations. 
\end{defn}

\subsection{Algebraic groups}

Let $k$ be a local field, that is, a nondiscrete locally compact field. Throughout, we assume that the characteristic of $k$ is zero. Then $k$ is $\R$, $\C$ or a finite extension of $\mathbb Q_p$ for some prime $p$. In this paper, by a $k$-group $\mathbf G$, we simply mean a linear algebraic group defined over $k$. We denote by $\mathbf G^0$ the Zariski connected component of the identity element $e \in \mathbf G$. We say that $\mathbf G$ is Zariski connected if $\mathbf G = \mathbf G^0$. We say that $\mathbf G$ is almost $k$-simple if $\mathbf G$ is not abelian and the only proper normal $k$-closed subgroups are finite. In that case, since $\mathbf G$ is connected, finite normal subgroups are contained in $\mathscr Z(\mathbf G)$. We say that $\mathbf G$ is $k$-simple if $\mathbf G$ is not abelian and the only normal $k$-closed subgroups are $\{e\}$ and $\mathbf G$. If $\mathbf G$ is (almost) $k$-simple, then we moreover say that $\mathbf G$ is $k$-isotropic if $\rk_k(\mathbf G) \geq 1$. We denote by $G = \mathbf G(k)$ the locally compact second countable group of its $k$-points. Then $G$ is a $k$-analytic Lie group and we naturally have $\Lie(\mathbf G)(k) = \Lie(G)$ as $k$-Lie algebras.

By a $k$-$\mathbf G$-variety $\mathbf V$, we simply mean an algebraic variety defined over $k$ that is endowed with an algebraic action $\mathbf G \curvearrowright \mathbf V$ so that the action map $\mathbf G \times \mathbf V \to \mathbf V : (g, v) \mapsto gv$ is a $k$-morphism. We denote by $V = \mathbf V(k)$ the Hausdorff locally compact second countable topological space of its $k$-points. If $\mathbf V$ is smooth, then $V$ is a $k$-analytic manifold. In the case $\mathbf V = \mathbf G/\mathbf H$, where $\mathbf H < \mathbf G$ is a $k$-subgroup, we denote by $\pi : \mathbf G \to \mathbf G/\mathbf H$ the $\mathbf G$-equivariant $k$-morphism such that $\pi(e) = \mathbf H$. Then  $v = \pi(e) \in (\mathbf G/\mathbf H)(k)$ and $H = \mathbf H(k) = \Stab_G(v)$.

\begin{facts}\label{fact-algebraic}
Keep the same notation as above. We will be using the following useful properties. 
\begin{itemize}
\item [$(\rm i)$] The continuous action $G \curvearrowright V$ has locally closed orbits for the $k$-analytic topology and so the quotient space $$G\backslash V = \left \{ Gv \mid v \in V\right \}$$ is a standard Borel space. 

\item [$(\rm ii)$] The continuous action $G \curvearrowright \Prob(V)$ has locally closed orbits and so the quotient space $$G\backslash \Prob(V) = \left \{ G\mu \mid \mu \in \Prob(V)\right \}$$ is a standard Borel space. 

\item [$(\rm iii)$] Assume that $\mathbf V = \mathbf G/\mathbf H$, where $\mathbf H < \mathbf G$ is a $k$-subgroup. Set $v = \pi(e) \in (\mathbf G/\mathbf H)(k)$ and $H = \mathbf H(k) = \Stab_G(v)$. The orbit map $G/H \to Gv : gH \to gv$ is a homeomorphism, we may identify the $G$-orbit $Gv$ with $G/H$ and we may regard $G/H \subset (\mathbf G/\mathbf H)(k)$ as a closed and open subset. 
\end{itemize}
\end{facts}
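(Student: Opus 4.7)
The plan is to establish the three facts in the order $(\rm iii) \Rightarrow (\rm i) \Rightarrow (\rm ii)$, which reflects their increasing difficulty. All three are classical in the theory of algebraic group actions over local fields (going back to Borel--Serre, Bernstein--Zelevinsky and Zimmer), but it is instructive to see how they cascade.

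For $(\rm iii)$, the key input is that the algebraic quotient morphism $\pi : \mathbf G \to \mathbf G/\mathbf H$ is a smooth $k$-morphism whose differential at the identity is surjective. The $k$-analytic implicit function theorem then promotes $\pi$ to a submersion of $k$-analytic manifolds when restricted to $k$-points, so the induced map $G \to (\mathbf G/\mathbf H)(k)$ is open. Since the orbit map factors as a continuous bijection $G/H \to Gv$ followed by the inclusion $Gv \hookrightarrow (\mathbf G/\mathbf H)(k)$, openness upgrades this bijection to a homeomorphism and exhibits $Gv$ as an open subset of $(\mathbf G/\mathbf H)(k)$. The clopen claim then follows because $(\mathbf G/\mathbf H)(k)$ decomposes into at most countably many $G$-orbits (by the countability of the relevant Galois cohomology $\rH^1(k, \mathbf H)$ for $k$ a local field of characteristic zero): each such orbit is open by the argument just given, and hence closed as the complement of a union of open sets.

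For $(\rm i)$, combine $(\rm iii)$ with the classical theorem that $\mathbf G$-orbits in an algebraic $\mathbf G$-variety are locally Zariski-closed. Applied to $\mathbf H = \Stab_{\mathbf G}(v)$ for each $v \in V$, part $(\rm iii)$ identifies $Gv$ with an open subset of the $k$-points of the locally Zariski-closed orbit $\mathbf G v$, so $Gv$ is locally closed in $V$ for the $k$-analytic topology. Since every orbit of the continuous action of the Polish group $G$ on the Polish space $V$ is locally closed, the Effros--Mackey theorem implies that $G\backslash V$ is a standard Borel space.

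The main obstacle is $(\rm ii)$, because $\Prob(V)$ is not itself algebraic and there is no direct orbit-stabilizer reduction available. The strategy is to invoke the fundamental result of Zimmer: for any $\mu \in \Prob(V)$, the stabilizer $\Stab_G(\mu)$ coincides with the set of $k$-points of a $k$-closed algebraic subgroup $\mathbf H_\mu < \mathbf G$, and the orbit $G\mu$ is locally closed in $\Prob(V)$. The proof of this rests on Furstenberg's lemma: if $g_n \in G$ escapes to infinity modulo $\Stab_G(\mu)$ but $g_n \mu$ converges in $\Prob(V)$, then the limit must be supported on a proper $\mathbf G$-invariant subvariety of $\mathbf V$, which opens the way for a Noetherian induction on the algebraic support of $\mu$ and forces local closedness of the orbit together with algebraicity of the stabilizer. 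Given this structural information, the Effros--Mackey theorem again yields that $G\backslash \Prob(V)$ is a standard Borel space.
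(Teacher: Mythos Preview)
Your proposal is correct and your sketches faithfully unpack the arguments behind the references the paper cites: \cite[Proposition I.2.1.4]{Ma91} for $(\rm iii)$, \cite[Theorem 2.1.14, Proposition 3.1.3]{Zi84} for $(\rm i)$, and \cite[Proposition 3.2.6]{Zi84} together with \cite[Theorem 1.7]{BDL14} for $(\rm ii)$. The paper's own proof consists only of these citations, so your approach is the same in substance, just more explicit.

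One small imprecision worth flagging in your sketch of $(\rm ii)$: in the Furstenberg-type lemma you invoke, the limiting measure is supported on a proper Zariski-closed subvariety of the algebraic support of $\mu$, not on a proper $\mathbf G$-invariant subvariety of $\mathbf V$ (indeed, when $\mathbf V = \mathbf G/\mathbf P$ is homogeneous there are no such proper invariant subvarieties, so your formulation would be vacuous). The Noetherian induction accordingly runs over all closed subvarieties of $\mathbf V$ containing the support, ordered by inclusion, not over $\mathbf G$-invariant ones. With that correction your outline matches the standard proof.
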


\begin{proof}
$(\rm i)$ This follows from \cite[Theorem 2.1.14, Proposition 3.1.3]{Zi84}.

$(\rm ii)$ This follows from \cite[Theorem 2.1.14, Proposition 3.2.6]{Zi84} and more generally \cite[Theorem 1.7]{BDL14}.

$(\rm iii)$  This follows from \cite[Proposition I.2.1.4]{Ma91}.
\end{proof}

\subsection{Algebraic representations of equivalence relations}

Let $\mathscr R$ be an equivalence relation on a standard probability space $(X, \nu)$, $H$ a locally compact second countable group and $\alpha : \mathscr R \to H$ a measurable $1$-cocycle. Let $Z$ be a standard Borel space endowed with a Borel action $H \curvearrowright Z$ and $f : X \to Z$ a Borel map. We say that $f : X \to Z$ is $(\mathscr R, \alpha)$-{\em equivariant} if for $m$-almost every $(x, y) \in \mathscr R$, we have $f(x) = \alpha(x, y)f(y)$. We say that $f : X \to Z$ is $\mathscr R$-{\em invariant} if for $m$-almost every $(x, y) \in \mathscr R$, we have $f(x) = f(y)$.

The following dichotomy theorem generalizes Zimmer's result \cite[Theorem 9.2.3]{Zi84} (see also \cite[Theorem 6.1]{BDL14} for the case of measurable $1$-cocycles associated with nonsingular group actions). For the sake of completeness, we give a short proof.

\begin{thm}\label{thm-non-trivial-gate}
Let $k$ be a local field of characteristic zero and $\mathbf G$ a $k$-isotropic almost $k$-simple algebraic $k$-group. Let $\mathscr R$ be an amenable ergodic equivalence relation on $(X, \nu)$ and $\alpha : \mathscr R \to G$ a measurable $1$-cocycle. Then at least one of the following assertions holds:
\begin{itemize}
\item [$(\rm i)$] There exist a proper $k$-subgroup $\mathbf H < \mathbf G$ and an $(\mathscr R, \alpha)$-equivariant measurable map $\psi : X \to G/H$, where we regard $G/H \subset (\mathbf G/\mathbf H)(k)$ with $H = \mathbf H(k)$.

\item [$(\rm ii)$] There exist a compact subgroup $L < G$ and an $(\mathscr R, \alpha)$-equivariant measurable map $f : X \to G/L$.
\end{itemize}
\end{thm}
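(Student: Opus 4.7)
The plan is to adapt Zimmer's original scheme from \cite[Theorem 9.2.3]{Zi84}, suitably translated from group actions to equivalence relations. I would use three ingredients in sequence: amenability of $\mathscr{R}$ to produce an equivariant map into probability measures on the Furstenberg boundary; ergodicity to concentrate this map on a single $G$-orbit; and an algebraic form of Furstenberg's lemma to classify the stabilizer.

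First, since $\mathbf{G}$ is $k$-isotropic, I would pick a minimal parabolic $k$-subgroup $\mathbf{P} < \mathbf{G}$, so that $P = \mathbf{P}(k)$ is amenable and $G/P$ is compact. Amenability of $\mathscr{R}$ together with that of $P$ should yield an $(\mathscr{R},\alpha)$-equivariant Borel map $\phi : X \to \Prob(G/P)$, by the standard construction for amenable equivalence relations with cocycles into locally compact groups acting on compact spaces with amenable stabilizers. Next, by Facts \ref{fact-algebraic}(ii), the $G$-action on $\Prob(G/P)$ has locally closed orbits, so $G \backslash \Prob(G/P)$ is a standard Borel space. The composition $X \xrightarrow{\phi} \Prob(G/P) \to G \backslash \Prob(G/P)$ is $\mathscr{R}$-invariant, and hence essentially constant by ergodicity of $\mathscr{R}$. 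Thus $\phi$ takes values in a single orbit $G\mu_0$, and setting $S = \Stab_G(\mu_0)$, this orbit identifies canonically with $G/S$, giving an $(\mathscr{R},\alpha)$-equivariant Borel map $\phi : X \to G/S$.

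The final step would be to classify $S$ using the algebraic form of Furstenberg's lemma (cf.\ \cite[Corollary 3.2.17]{Zi84} in the real case, with analogous statements over general local fields following from the machinery in \cite{Ma91} and \cite{BDL14}): for any probability measure on the minimal parabolic boundary $G/P$ of an almost $k$-simple $k$-isotropic algebraic group, its stabilizer in $G$ either has proper Zariski closure $\mathbf{H} \subsetneq \mathbf{G}$, or is compact. In the first case, $S \subset H = \mathbf{H}(k)$, and composing with the natural $G$-equivariant map $G/S \to G/H$ produces the equivariant map $\psi : X \to G/H$ required for (i). In the second case, taking $L = S$ and $f = \phi$ directly gives (ii).

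The main technical obstacle is the invocation of Furstenberg's lemma. Its essential content is that the $G$-action on $G/P$ is sufficiently contracting: via a Cartan decomposition, any non-compact sequence in $\Stab_G(\mu_0)$ must push $\mu_0$ towards a point mass, forcing $\Stab_G(\mu_0)$ into a conjugate of $P$ (a proper $k$-subgroup) unless it is already compact. This is classical in the archimedean case and known over general local fields of characteristic zero. All other ingredients (amenability producing boundary maps, ergodicity giving essential constancy, standard Borel quotients from locally closed orbits) transfer transparently from the group-action setting to equivalence relations, so the only real work is packaging the proof in the equivalence-relation language.
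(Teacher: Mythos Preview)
Your proposal is correct and follows essentially the same route as the paper: produce an equivariant map $X\to\Prob(G/P)$ from amenability, concentrate it on a single $G$-orbit $G\mu_0$ by ergodicity and Facts~\ref{fact-algebraic}(ii), and then apply an algebraic dichotomy to the stabilizer $\Stab_G(\mu_0)$. The only substantive difference is in how the dichotomy is justified: you invoke a general ``algebraic Furstenberg lemma'' (proper Zariski closure or compact), whereas the paper cites \cite[Proposition~1.9]{BDL14} concretely, extracting a normal $k$-subgroup $\mathbf H_0\lhd\mathbf H$ (with $\mathbf H$ the Zariski closure of $\Stab_G(\mu_0)$) such that $\mu_0$ is supported on the $\mathbf H_0$-fixed points and the image of $\Stab_G(\mu_0)$ in $(\mathbf H/\mathbf H_0)(k)$ is precompact; almost $k$-simplicity of $\mathbf G$ then forces either $\mathbf H\neq\mathbf G$ or $\mathbf H_0$ finite, yielding (i) or (ii) respectively.
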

Observe that in the case $k = \R$, assertion $(\rm i)$ always holds.

\begin{proof}
Choose a minimal parabolic $k$-subgroup $\mathbf P < \mathbf G$ and denote by $\mathbf V = \mathbf G/\mathbf P$ the homogeneous $k$-$\mathbf G$-variety. Write $G = \mathbf G(k)$, $P = \mathbf P(k)$ and $V = (\mathbf G/\mathbf P)(k) = \mathbf G(k)/\mathbf P(k)$ (see \cite[Proposition 20.5]{Bo91}). Since $V$ is compact and since $\mathscr R$ is amenable, there exists an $(\mathscr R, \alpha)$-equivariant measurable map $\beta : X \to \Prob(V)$ (see \cite[Proposition 4.3.9]{Zi84}). By Facts \ref{fact-algebraic}$(\rm ii)$, the continuous action $G \curvearrowright \Prob(V)$ has locally closed orbits and so the quotient space $G\backslash \Prob(V)$ is a standard Borel space. Denote by $p : \Prob(V) \to G\backslash\Prob(V)$ the quotient Borel map. Then the measurable map $p \circ \beta : X \to G\backslash\Prob(V)$ is $\mathscr R$-invariant. Since $\mathscr R$ is ergodic, $p \circ \beta : X \to G\backslash\Prob(V)$ is $\nu$-almost everywhere constant and so there exists $\mu \in \Prob(V)$ such that $\beta(X)$ is essentially contained in $G \mu$. Set $L = \Stab_G(\mu) < G$. By \cite[Theorem 2.1.14]{Zi84}, the orbit map $G/L \to G\mu : gL \to g\mu$ is a homeomorphism and so we may regard $\beta : X \to G/L$ as an $(\mathscr R, \alpha)$-equivariant measurable map.

Denote by $\mathbf H$ the Zariski closure of $L$ in $\mathbf G$. Then $\mathbf H < \mathbf G$ is a $k$-subgroup and we set $H = \mathbf H(k)$. By \cite[Proposition 1.9]{BDL14}, there exists a $k$-subgroup $\mathbf H_0 < \mathbf H < \mathbf G$ such that $\mathbf H_0 \lhd \mathbf H$ is normal, the image of $L$ is precompact in $(\mathbf H/\mathbf H_0)(k)$ and $\mu$ is supported on $\mathbf V^{\mathbf H_0} \cap V$. Since $\mathbf V^{\mathbf G} = \emptyset$, we have $\mathbf H_0 \neq \mathbf G$. 

Assume that $\mathbf H_0$ is infinite. Since $\mathbf H_0 \lhd \mathbf H$, $\mathbf H_0 \neq \mathbf G$ and $\mathbf G$ is almost $k$-simple, we have $\mathbf H \neq \mathbf G$. Since $L < H$, we may consider the $G$-equivariant measurable factor map $q : G/L \to G/H$. Then the measurable map $f = q \circ \beta : X \to G/H$ is $(\mathscr R, \alpha)$-equivariant. 

Assume that $\mathbf H_0$ is finite. Set $H_0 = \mathbf H_0(k)$. The image of $L$ in $(\mathbf H/\mathbf H_0)(k)$ is contained in $H/H_0 \subset (\mathbf H/\mathbf H_0)(k)$, which is closed in $(\mathbf H/\mathbf H_0)(k)$. Then the image of $L$ in $H/H_0$ is precompact. Since $H_0 < H$ is finite, this further implies that $L < H$ is compact.
\end{proof}

We use \cite[Theorem 6.1]{BDL14} in combination with \cite[Theorem 14]{TD14} to show that Zariski dense discrete subgroups are not inner amenable.

\begin{prop}\label{prop-not-inner-amenable}
    Let $k$ be a local field of characteristic zero and $\mathbf G$ a $k$-isotropic almost $k$-simple algebraic $k$-group. Let $\Gamma < \mathbf G(k)$ be a Zariski dense discrete subgroup. Then $\Gamma$ is not inner amenable.
\end{prop}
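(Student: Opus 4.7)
The plan is to argue by contradiction. Assume that $\Gamma$ is inner amenable. By Tucker-Drob's structure theorem \cite[Theorem 14]{TD14}, $\Gamma$ then admits a non-trivial amenable invariant random subgroup, that is, a conjugation-invariant Borel probability measure $\theta$ on $\Sub(\Gamma)$, supported on amenable subgroups, with $\theta(\{e\}) < 1$ (and in fact one may arrange $\theta$ to give zero mass to the countable set of finite normal subgroups of $\Gamma$). Equivalently, one produces a free ergodic pmp action $\Gamma \curvearrowright (X,\nu)$ together with a $\Gamma$-equivariant Borel map $\phi \colon X \to \Sub(\Gamma)$ with $\phi_*\nu = \theta$ and such that the associated skew-product orbit equivalence relation is amenable.

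Next, I would transport the picture into the algebraic world by post-composing $\phi$ with the Borel $\Gamma$-equivariant Zariski closure map $\Sub(\Gamma) \to \Sub(\mathbf G)$, $\Lambda \mapsto \overline{\Lambda}^{\mathrm{Zar}}$. The pushforward $\overline\theta$ is then a $\Gamma$-conjugation-invariant Borel probability measure on algebraic $k$-subgroups of $\mathbf G$, concentrated on amenable $k$-subgroups. At this point I would invoke \cite[Theorem 6.1]{BDL14}, which is an algebraic Borel-density-type statement for cocycle-equivariant maps valued in algebraic varieties attached to $\mathbf G$: combined with the Zariski density of $\Gamma$, it upgrades the $\Gamma$-conjugation invariance of $\overline\theta$ to full $\mathbf G$-invariance. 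Since $\mathbf G$ is almost $k$-simple, the only Zariski closed normal subgroups are finite and central, so $\overline\theta$-almost every algebraic subgroup is a finite subgroup of $\mathscr Z(\mathbf G)$.

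Pulling back to $\Gamma$, this forces $\theta$-almost every $\Lambda < \Gamma$ to be contained in the finite set $\mathscr Z(\mathbf G)(k) \cap \Gamma$, i.e.\! $\Lambda$ is a finite normal subgroup of $\Gamma$. This contradicts the strengthened non-triviality clause in \cite[Theorem 14]{TD14}, which precludes such degenerate IRS concentrated on finite normal subgroups, and thus $\Gamma$ cannot be inner amenable. The main obstacle, and the technical heart of the argument, is the correct deployment of \cite[Theorem 6.1]{BDL14}: one must realize the space of amenable algebraic $k$-subgroups of $\mathbf G$ as a countable union of $\mathbf G$-varieties (for instance via Chevalley embeddings, or by stratifying by conjugacy class and dimension), and use $k$-isotropicity together with Zariski density of $\Gamma$ to rule out any $\Gamma$-invariant selection into a proper $\mathbf G$-orbit. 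Once this rigidity is in hand, the chain from amenable IRS to Zariski closure to central-finite subgroups closes cleanly.
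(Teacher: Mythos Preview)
Your reading of \cite[Theorem 14]{TD14} does not match its content as used here. That theorem yields a short exact sequence $1 \to N \to \Gamma \to K \to 1$ with $K$ amenable, together with a dichotomy: either $\mathscr Z(N)$ is infinite, or $N = LM$ with $L, M$ commuting normal subgroups of $\Gamma$, $M$ infinite amenable, and $L \cap M$ finite. Either branch hands you directly a single infinite \emph{normal} subgroup of $\Gamma$ that is abelian or amenable. Since any infinite normal subgroup of a Zariski dense $\Gamma$ is itself Zariski dense (its Zariski closure is normal in the almost $k$-simple $\mathbf G$), case (i) forces $\mathbf G$ to be abelian. In case (ii) one applies \cite[Theorem 6.1]{BDL14} to the \emph{trivial} action $M \curvearrowright \{\bullet\}$, which is amenable because $M$ is: the resulting dichotomy places $M$ inside a conjugate of a proper algebraic $k$-subgroup (contradicting Zariski density of $M$) or of a compact subgroup (contradicting that $M$ is infinite discrete).

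Your IRS route is different, and the step you yourself flag as ``the main obstacle'' is a genuine gap, not a formality. \cite[Theorem 6.1]{BDL14} concerns cocycles arising from \emph{amenable} actions; it does not by itself upgrade $\Gamma$-invariance of a probability measure on a space of subgroups to $\mathbf G$-invariance. The conjugation action $\Gamma \curvearrowright (\Sub(\Gamma), \theta)$ has stabilizers $N_\Gamma(\Lambda)$, which need not be amenable even when $\Lambda$ is, so Zimmer-amenability of that action is not given. One might try to repair this via Borel-density arguments on Grassmannians of $\Lie(\mathbf G)$ (e.g.\ through \cite[Proposition 1.9]{BDL14}), but that is substantive extra work your sketch does not carry out. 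The paper's approach bypasses the issue entirely by extracting one amenable normal subgroup and feeding its trivial action into the cocycle theorem.
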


\begin{proof}
By contradiction, assume that $\Gamma < \mathbf G(k)$ is inner amenable. By \cite[Theorem 14]{TD14}, there exists a short exact sequence $1 \to N \to \Gamma \to K \to 1$, where $K$ is amenable and either
\begin{itemize}
\item [$(\rm i)$] $\mathscr Z(N)$ is infinite, or
\item [$(\rm ii)$] $N = LM$, where $L$ and $M$ are commuting normal subgroups of $\Gamma$ such that $M$ is infinite and amenable, and $L \cap M$ is finite.
\end{itemize}

In case $(\rm i)$, since $\mathscr Z(N) \lhd \Gamma$ is an infinite normal subgroup, it follows that $\mathscr Z(N)$ is Zariski dense in $\mathbf G$. This would imply that $\mathbf G$ is abelian, which is absurd.  

In case $(\rm ii)$, since $M \lhd \Gamma$ is an infinite normal subgroup, it follows that $M$ is Zariski dense in $\mathbf G$. Since $M$ is amenable, applying \cite[Theorem 6.1]{BDL14} to the trivial action $M \curvearrowright \{\bullet\}$ which is amenable, at least one of the following assertions holds:
\begin{itemize}

\item There exist a proper $k$-subgroup $\mathbf H < \mathbf G$ and an $M$-equivariant measurable map $\psi : \{\bullet\} \to G/H$, where we regard $G/H \subset (\mathbf G/\mathbf H)(k)$ with $H = \mathbf H(k)$. Then there exists $g \in G$ such that $M < g H g^{-1}$ and so the Zariski closure of $M$ is a proper $k$-subgroup of $\mathbf G$, which is a contradiction.

\item There exist a compact subgroup $L < G$ and an $M$-equivariant measurable map $f : \{\bullet\} \to G/L$. Then there exists $g \in G$ such that $M < g L g^{-1}$ and so $M$ is compact, which is a contradiction. 
\end{itemize} 
This shows that $\Gamma$ is not inner amenable.
\end{proof}

\begin{rem}\label{remark-icc}
 Let $k$ be a local field of characteristic zero and $\mathbf G$ a $k$-isotropic $k$-simple algebraic $k$-group. Let $\Gamma < \mathbf G(k)$ be a Zariski dense discrete subgroup. Then $\Gamma$ is icc meaning that $\Gamma$ has infinite conjugacy classes. Indeed, let $\gamma \in \Gamma$ be such that its conjugacy class $C(\gamma) = \left \{ h \gamma h^{-1} \mid h \in \Gamma\right \}$ is finite. Then $\mathscr Z_\Gamma(\gamma) < \Gamma$ has finite index and so $\mathscr Z_\Gamma(\gamma)$ is Zariski dense in $\mathbf G$ by Borel's density theorem \cite{Bo60}. This implies that $\gamma \in \mathscr Z(\mathbf G)$. Since $\mathbf G$ is $k$-simple, we have $\gamma = e$.
\end{rem}

\subsection{Chabauty topology and Wijsman topology}

Let $G$ be a locally compact second countable group. Consider the space $\Sub(G)$ of all closed subgroups of $G$. Endowed with the Chabauty topology, $\Sub(G)$ is a compact metrizable space and the conjugation action $G \curvearrowright \Sub(G)$ is continuous (see \cite{Ch50}). Recall that for any sequence $(H_n)_{n \in \N}$ in $\Sub(G)$ and any element $H \in \Sub(G)$, we have $H_n \to H$ in $\Sub(G)$ if and only if the following two conditions hold:
\begin{itemize}
\item [$(\rm i)$] For every $x \in H$, there exists a sequence $(x_n)_{n \in \N}$ in $G$ such that $x_n \in H_n$ for every $n \in \N$ and $x_n \to x$ in $G$. 
\item [$(\rm ii)$] For every increasing sequence $(n_j)_{j \in \N}$ in $\N$, every sequence $(x_{j})_{j \in \N}$ in $G$ such that $x_{j} \in H_{n_j}$ for every $j \in \N$, and every element $x \in G$, if $x_{j} \to x$ in $G$, then $x \in H$.
\end{itemize}
We refer the reader to \cite[Section E.1]{BP92} for further details. Then for every $K \in \Sub(G)$, the subset
\begin{equation}\label{eq-closed-bis}
\left \{H \in \Sub(G) \mid H < K \right \} \subset \Sub(G) \quad \text{is closed}.
\end{equation}

Let $\mathscr G$ be a Polish group and choose a compatible complete metric $d : \mathscr G \times \mathscr G \to \R_+$. Denote by $\CL(\mathscr G)$ the space of all nonempty closed subsets of $\mathscr G$. For all $x\in \mathscr G$, $A \in \CL(\mathscr G)$, set $d(x, A) = \inf \left \{d(x, a) \mid a \in A\right \}$. Following \cite{Be90}, the Wijsman topology $\tau_{W(d)}$ on $\CL(\mathscr G)$ is defined as the weakest topology on $\CL(\mathscr G)$ that makes the maps $\CL(\mathscr G) \to \R_+ : A \mapsto d(x, A)$ continuous for all $x \in \mathscr G$. As explained in \cite{Be90}, the Borel structure induced by the Wijsman topology $\tau_{W(d)}$ coincides with the Effros--Borel structure on $\CL(\mathscr G)$. By \cite[Theorem 4.3]{Be90}, $(\CL(\mathscr G), \tau_{W(d)})$ is a Polish space. Consider the space $\Sub(\mathscr G)$ of all closed subgroups of $\mathscr G$. Then $\Sub(\mathscr G) \subset \CL(\mathscr G)$ is closed with respect to $\tau_{W(d)}$ and so $(\Sub(\mathscr G), \tau_{W(d)})$ is a Polish space. Observe that the subset
\begin{equation}\label{eq-closed}
\left \{ (h, \mathscr H) \in \mathscr G \times \Sub(\mathscr G) \mid h \in \mathscr H \right \} \subset \mathscr G \times \Sub(\mathscr G) \quad \text{is closed}.
\end{equation}
When $\mathscr G = G$ is a locally compact second countable group and $d_G : G \times G \to \R_+$ is a left invariant compatible proper complete metric (see \cite{St73}), the Wijsman topology $\tau_{W(d_G)}$ and the Chabauty topology coincide on $\Sub(G)$ (see \cite[Section 5]{BLLN89}).

Let now $G$ be a locally compact second countable group. Choose a left invariant compatible proper complete metric $d_{G} : G \times G \to \R_+$ (see \cite{St73}). Then $d = \min(d_G, 1) : G \times G \to \R_+$ is a left invariant compatible complete metric such that $d \leq 1$. Let $(X, \nu)$ be a standard probability space. Denote by $\mathscr G = \rL^0(X, \nu, G)$ the space of all $\nu$-equivalence classes of measurable maps $f : X \to G$. Endowed with the topology of convergence in measure, $\mathscr G$ is a Polish group (see e.g.\! \cite[Section 19]{Ke10}). More precisely, define the metric $d_{\mathscr G} : \mathscr G \times \mathscr G \to \R_+$ by the formula
$$\forall f_1, f_2 \in \mathscr G, \quad d_{\mathscr G}(f_1, f_2) = \int_X d(f_1(x), f_2(x)) \, {\rm d}\nu(x).$$
Then $d_{\mathscr G}$ is a left invariant compatible complete metric on $\mathscr G$. For every $H \in \Sub(G)$, we have that $\mathscr H = \rL^0(X, \nu, H) \in \Sub(\mathscr G)$. The following technical result will be very useful in the proof of Theorem \ref{thm-product-algebraic}.

\begin{prop}\label{prop-borel}
Keep the same notation as above. Then the map 
$$\Psi : \Sub(G) \to \Sub(\mathscr G) : H \mapsto \rL^0(X, \nu, H)$$ 
is Borel.
\end{prop}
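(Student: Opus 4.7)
By the defining property of the Wijsman topology on $\Sub(\mathscr G)$, the map $\Psi$ is Borel if and only if for every $f \in \mathscr G$ the map
$$\Sub(G) \to \R_+ : H \mapsto d_{\mathscr G}\bigl(f, \rL^0(X, \nu, H)\bigr)$$
is Borel. In fact, we shall show this map is continuous. The plan rests on the identity
\begin{equation}\label{eq:key}
d_{\mathscr G}\bigl(f, \rL^0(X, \nu, H)\bigr) = \int_X d(f(x), H) \, {\rm d}\nu(x),
\end{equation}
where $d(y, H) = \inf_{h \in H} d(y, h)$ for $y \in G$.

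First I would establish \eqref{eq:key}. The inequality $\geq$ is immediate: for any $g \in \rL^0(X, \nu, H)$ one has $d(f(x), g(x)) \geq d(f(x), H)$ $\nu$-almost everywhere, and integrating gives the bound. For the reverse inequality, fix $\eps > 0$ and consider the set-valued map
$$\Phi_\eps : X \to 2^G : x \mapsto \left\{h \in H \mid d(f(x), h) \leq d(f(x), H) + \eps \right\}.$$
By construction $\Phi_\eps$ has nonempty closed values in the Polish space $G$. Since the maps $x \mapsto f(x)$ and $x \mapsto d(f(x), H)$ are both Borel, $\Phi_\eps$ is a Borel measurable multifunction, so the Kuratowski--Ryll-Nardzewski selection theorem produces a Borel measurable $g_\eps : X \to G$ with $g_\eps(x) \in \Phi_\eps(x) \subset H$ for every $x \in X$. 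Then $g_\eps \in \rL^0(X, \nu, H)$ and
$$d_{\mathscr G}(f, g_\eps) = \int_X d(f(x), g_\eps(x)) \, {\rm d}\nu(x) \leq \int_X d(f(x), H) \, {\rm d}\nu(x) + \eps.$$
Letting $\eps \to 0$ yields the inequality $\leq$ in \eqref{eq:key}.

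Once \eqref{eq:key} is in hand, continuity follows quickly. Since the Chabauty topology on $\Sub(G)$ coincides with the Wijsman topology $\tau_{W(d_G)}$ and $d = \min(d_G, 1)$, the map $H \mapsto d(y, H) = \min(d_G(y, H), 1)$ is continuous on $\Sub(G)$ for every fixed $y \in G$. Thus whenever $H_n \to H$ in $\Sub(G)$, we have $d(f(x), H_n) \to d(f(x), H)$ for every $x \in X$. Since $0 \leq d \leq 1$ and $\nu$ is a probability measure, dominated convergence gives
$$\int_X d(f(x), H_n) \, {\rm d}\nu(x) \longrightarrow \int_X d(f(x), H) \, {\rm d}\nu(x),$$
so $H \mapsto d_{\mathscr G}(f, \Psi(H))$ is continuous on $\Sub(G)$. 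This holds for every $f \in \mathscr G$, hence $\Psi$ is continuous (in particular Borel).

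The main technical point is the measurable selection step in proving \eqref{eq:key}: one must verify that $\Phi_\eps$ is Borel measurable as a multifunction, which hinges on the joint Borel measurability of $(x, H) \mapsto d(f(x), H)$ and the fact that $H \in \Sub(G)$ is closed in $G$. Everything else reduces to continuity of the Wijsman distance together with a single application of dominated convergence.
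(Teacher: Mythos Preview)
Your argument is correct and in fact proves more than the paper does: you obtain that $\Psi$ is continuous, whereas the paper only establishes Borel measurability. The key difference is your integral identity \eqref{eq:key}. Once that is in hand, continuity of $H \mapsto d_G(y,H)$ (built into the Wijsman topology) plus $d(y,H) = \min(d_G(y,H),1)$ and dominated convergence immediately give continuity of $H \mapsto d_{\mathscr G}(f,\Psi(H))$ for every $f$. The paper instead avoids \eqref{eq:key} entirely: it uses Lusin's theorem to approximate any $f \in \rL^0(X,\nu,H)$ by step functions and then by elements of $\rL^0(X,\nu,H_n)$, obtaining only the one-sided inequality $\limsup_n d_{\mathscr G}(f,\mathscr H_n) \le d_{\mathscr G}(f,\mathscr H)$. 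From this upper semicontinuity the authors deduce that preimages of the subbasic Wijsman open sets are $F_\sigma$ or open, hence Borel. Your route is shorter and yields a sharper conclusion; the paper's route is more bare-hands and sidesteps any selection theorem.

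One remark on the selection step: rather than invoking Kuratowski--Ryll-Nardzewski and checking weak measurability of $\Phi_\eps$, you can argue directly. Fix a countable dense subset $\{h_n\}$ of $H$ and let $n(x)$ be the least $n$ with $d(f(x),h_n) < d(f(x),H)+\eps$; then $x \mapsto h_{n(x)}$ is Borel and does the job. This removes the only point you flagged as needing verification.
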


\begin{proof}
Consider the Wijsman topology $\tau = \tau_{W(d_{\mathscr G})}$ on $\Sub(\mathscr G)$. Fix a countable dense subset $D \subset \mathscr G$. For every $f \in D$ and all $\alpha, \beta \in \mathbb Q \cap [0, 1]$ such that $\alpha < \beta$, define the open subsets 
\begin{align*}
O(f, \alpha)^+ &= \left \{\mathscr K \in \Sub(\mathscr G) \mid d_{\mathscr G}(f, \mathscr K) > \alpha \right \} \\
O(f, \beta)^- &= \left \{\mathscr K \in \Sub(\mathscr G) \mid d_{\mathscr G}(f, \mathscr K) < \beta \right \} \\
O(f, \alpha, \beta) &= O(f, \alpha)^+ \cap O(f, \beta)^-.
\end{align*}
Then the countable set $\left \{ O(f, \alpha, \beta) \mid f \in D, \alpha, \beta \in \mathbb Q \cap [0, 1],\alpha < \beta \right \}$ is a subbase for the topology $\tau$ on $\Sub(\mathscr G)$. Thus, in order to show that the map $\Psi : \Sub(G) \to \Sub(\mathscr G)$ is Borel, it suffices to show that $\Psi^{-1}(O(f, \alpha)^+)$ and $\Psi^{-1}(O(f, \alpha)^-)$ are Borel subsets of $\Sub(G)$ for every $f \in D$ and every $\alpha \in \mathbb Q \cap [0, 1]$.

\begin{claim}\label{claim-lusin}
Let $(H_n)_{n \in \N}$ be a sequence in $\Sub(G)$ and $H \in \Sub(G)$ such that $H_n \to H$ in $\Sub(G)$. Set $\mathscr H_n = \rL^0(X, \nu, H_n)$ for every $n \in \N$ and $\mathscr H = \rL^0(X, \nu, H)$. Then for every $f \in \mathscr H$, there exists a sequence $(f_n)_{n \in \N}$ in $\mathscr G$ such that $f_n \in \mathscr H_n$ for every $n \in \N$ and $\lim_n d_{\mathscr G}(f, f_n) = 0$.
\end{claim}

Indeed, let $f \in \mathscr H$. Without loss of generality, we may assume that $X$ is a compact metrizable space and $\nu \in \Prob(X)$ is a Borel probability measure. Regard $f : X \to H$ as a measurable function. By Lusin's theorem, for every $n \geq 1$, there exists a closed subset $X_n \subset X$ such that $\nu(X_n) \geq 1 - 1/n$ and $f|_{X_n} : X_n \to H$ is continuous. By compactness, there exist $r \geq 1$ and $h_1, \dots, h_r \in H$ such that $X_n \subset \bigcup_{j = 1}^r f^{-1}(B_{d}(h_j, 1/n))$, where $B_d(h, \varepsilon) \subset G$ denotes the open ball in $G$ of center $h \in G$ and radius $\varepsilon > 0$. Set $Y_1 = X_n \cap f^{-1}(B_{d}(h_1, 1/n))$ and $Y_j = X_n \cap f^{-1}(B_{d}(h_j, 1/n)) \setminus \bigcup_{i = 1}^{j - 1} Y_i$ for every $2 \leq j \leq r$. Then $(Y_j)_{1 \leq j \leq r}$ is a measurable partition of $X_n$. Since $H_n \to H$ in $\Sub(G)$, for every $1 \leq j \leq r$, we may find $h_j^n \in B_{d}(h_j, 1/n) \cap H_n$. Define the measurable map $f_n : X \to H_n$ by $f_n|_{X \setminus X_n} = e \mathbf 1_{X \setminus X_n}$ and $f_n|_{Y_j} = h_j^n \mathbf 1_{Y_j}$ for every $1 \leq j \leq r$. Then we have
\begin{align*}
\limsup_n d_{\mathscr G}(f, f_n) &= \limsup_n \int_X d(f(x), f_n(x)) \,{\rm d}\nu(x) \\
&\leq \limsup_n \left(\nu(X \setminus X_n) + \frac2n \nu(X_n) \right) =0.
\end{align*}
This finishes the proof of Claim \ref{claim-lusin}.

\begin{claim}\label{claim-closed}
Let $(H_n)_{n \in \N}$ be a sequence in $\Sub(G)$ and $H \in \Sub(G)$ such that $H_n \to H$ in $\Sub(G)$. Set $\mathscr H_n = \rL^0(X, \nu, H_n)$ for every $n \in \N$ and $\mathscr H = \rL^0(X, \nu, H)$. Then for every $f \in \mathscr G$, we have
$$\limsup_n d_{\mathscr G}(f, \mathscr H_n) \leq d_{\mathscr G}(f, \mathscr H).$$
\end{claim}

Indeed, set $\alpha = d_{\mathscr G}(f, \mathscr H)$ and $\beta = \limsup_n d_{\mathscr G}(f, \mathscr H_n)$. Choose an increasing sequence $(n_j)_{j \in \N}$ in $\N^*$ such that $\beta = \lim_j d_{\mathscr G}(f, \mathscr H_{n_j})$. Choose a sequence $(f_n)_{n \in \N}$ in $\mathscr H$ such that $\alpha \leq d_{\mathscr G}(f, f_n) < \alpha + 1/n$ for every $n \geq 1$. By Claim \ref{claim-lusin}, we may choose a sequence $(h_n)_{n \geq 1}$ in $\mathscr G$ such that $h_n \in \mathscr H_n$ and $d_{\mathscr G}(f_n, h_n) \leq 1/n$ for every $n \geq 1$. In particular, for every $j \in \N$, we have 
$$d_{\mathscr G}(f, \mathscr H_{n_j}) \leq d_{\mathscr G}(f, h_{n_j}) \leq d_{\mathscr G}(f, f_{n_j}) + d_{\mathscr G}(f_{n_j}, h_{n_j}) \leq \alpha + \frac{2}{n_j}.$$
This implies that $\beta \leq \alpha$ and finishes the proof of Claim \ref{claim-closed}.

Let $f \in D$ and $\alpha \in \mathbb Q \cap [0, 1]$. Claim \ref{claim-closed} implies that $\Psi^{-1}(O(f, \alpha)^-) \subset \Sub(G)$ is open. Observe that 
$$O(f, \alpha)^+ = \bigcup_{j = 1}^\infty  \left \{\mathscr K \in \Sub(\mathscr G) \mid d_{\mathscr G}(f, \mathscr K) \geq \alpha + 1/j \right \}.$$
Claim \ref{claim-closed} implies that $\Psi^{-1}(O(f, \alpha)^+) \subset \Sub(G)$ is a countable union of closed subsets and so $\Psi^{-1}(O(f, \alpha)^+) \subset \Sub(G)$ is Borel. Therefore, we have showed that the map $\Psi : \Sub(G) \to \Sub(\mathscr G) $ is Borel.
\end{proof}

\section{Proofs of the main results}

We will frequently use the following version of the Jankov--von Neumann selection theorem.

\begin{thm}\label{thm-selection}
Let $(X, \nu)$ be a standard probability space, $Y$ a Polish space and $B \subset X \times Y$ a Borel subset such that $\pi_X(B) = X$, where $\pi_X : X \times Y \to X$ is the projection map. Then there exists a Borel map $f : X \to Y$ such that $(x, f(x)) \in B$ for $\nu$-almost every $x \in X$.
\end{thm}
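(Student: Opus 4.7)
The plan is to reduce the statement to the classical Jankov--von Neumann uniformization theorem (see e.g.\! Kechris, \emph{Classical Descriptive Set Theory}, Theorem 18.1 or Bogachev, \emph{Measure Theory} II, Section 6.9) and then upgrade universal measurability to Borel measurability modulo a $\nu$-null set.

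First, I would drop the probability measure from the picture and work only with the underlying standard Borel structure on $X$. Since $B \subset X \times Y$ is Borel in the product of two standard Borel spaces and $\pi_X(B) = X$, the Jankov--von Neumann uniformization theorem provides a universally measurable section $f_0 : X \to Y$ such that $(x, f_0(x)) \in B$ for every $x \in X$. Here ``universally measurable'' means that $f_0^{-1}(U)$ lies in the completion of $\mathscr B(X)$ with respect to every Borel probability measure on $X$, in particular with respect to $\nu$.

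Second, I would convert $f_0$ into a genuine Borel map on $X$ at the cost of a $\nu$-null set. Indeed, $\nu$-measurable maps into a Polish space $Y$ coincide with Borel maps modulo a $\nu$-null set: choosing a countable base $\{U_n\}_{n \geq 1}$ of $Y$, each set $f_0^{-1}(U_n)$ agrees with some Borel set $B_n \subset X$ up to a $\nu$-null set $N_n$, and then $N = \bigcup_n N_n$ is a $\nu$-null Borel set such that $f_0|_{X \setminus N}$ is Borel. Extending this restriction arbitrarily (for instance by a constant value) to a Borel map $f : X \to Y$ yields $(x, f(x)) = (x, f_0(x)) \in B$ for every $x \in X \setminus N$, which is exactly the conclusion.

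There is no substantial obstacle here: the entire content is contained in Jankov--von Neumann, and the passage from universal measurability to Borel measurability up to a $\nu$-null set is a routine application of the definition of $\nu$-measurability together with a countable base argument. The only point requiring attention is to state the result in the form actually used in the sequel, namely with conclusion holding $\nu$-almost everywhere rather than everywhere, which is what allows the replacement of the universally measurable selector by a Borel one.
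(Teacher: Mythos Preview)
Your proposal is correct and follows essentially the same route as the paper: invoke Jankov--von Neumann to get a selector that is $\sigma(\Sigma_1^1)$-measurable (hence universally measurable, hence $\nu$-measurable), then replace it by a Borel map agreeing $\nu$-almost everywhere via the standard countable-base argument. The only cosmetic difference is that the paper phrases the first step through $\sigma(\Sigma_1^1)$-measurability and then passes to Lebesgue measurability, whereas you jump directly to universal measurability; the content is identical.
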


\begin{proof}
By the Jankov--von Neumann selection theorem, there exists a $\sigma(\Sigma_1^1)$-measurable map $f : X \to Y$ such that $(x, f(x))$ for every $x \in X$ (see \cite[Theorem 18.1]{Ke95}). We then use two well-known facts. Firstly, any $\sigma(\Sigma_1^1)$-measurable set is Lebesgue measurable (see \cite[Theorem 29.7]{Ke95}). Secondly, any Lebesgue measurable map $X \to Y$ coincides $\nu$-almost everywhere with a Borel map. Therefore, upon modifying $f : X \to Y$ on a $\nu$-null Borel subset, we may assume that the map $f : X \to Y$ is Borel and satisfies $(x, f(x)) \in B$ for $\nu$-almost every $x \in X$.
\end{proof}

Next, let $(X, \nu)$ and $(Y, \eta)$ be two standard probability spaces. Recall that the spaces $\rL^0(X \times Y, \nu \otimes \eta, \C)$, $\rL^0(Y, \eta, \C)$ and $\rL^0(X, \nu, \rL^0(Y, \eta, \C))$ are Polish spaces when endowed with the topology of convergence in measure. Moreover, the well-defined map 
$$\iota : \rL^0(X \times Y, \nu \otimes \eta, \mathbb C) \to \rL^0(X, \nu, \rL^0(Y, \eta, \C)) : f \mapsto (x \mapsto f(x, \cdot \,))$$
is isometric (hence injective) and surjective. Therefore, we may and will identify $\rL^0(X \times Y, \nu \otimes \eta, \mathbb C)$ and $\rL^0(X, \nu, \rL^0(Y, \eta, \C))$ (see e.g.\! \cite[Theorem 1]{Mo75}).

\subsection{Zimmer's method for product equivalence relations}

Let $k$ be a local field of characteristic zero and $\mathbf G$ a $k$-group. Set $G = \mathbf G(k)$. 

Let $\mathscr U$ be an ergodic equivalence relation on $(U, \nu)$ and $\mathscr V$ an ergodic equivalence relation on $(V, \eta)$. Set $\mathscr W = \mathscr U \times \mathscr V$ and $(W, \zeta) = (U \times V, \nu \otimes \eta)$. Let $\alpha : \mathscr W \to G$ be a Borel $1$-cocycle. Following \cite{Zi81}, we define the set $\mathfrak A(\alpha)$ that consists of all pairs $(\varphi, \mathfrak H)$, where $\varphi : W \to G$ is a Borel map, $\mathfrak H : U \to \Sub(G) : u \mapsto H_u$ is a Borel map such that for $\nu$-almost every $u \in U$, there exists a $k$-subgroup $\mathbf H_u < \mathbf G$ such that $\mathbf H_u(k) = H_u$ and the Borel map $V \to G/H_u : v \mapsto \varphi(u, v)H_u$ is $(\mathscr V, \alpha_u)$-equivariant, where $\alpha_u : \mathscr V \to G : (v_1, v_2) \mapsto \alpha((u, v_1), (u, v_2))$. Observe that the trivial pair $(\varphi, \mathfrak H)$ defined by $\varphi : W \to G : (u, v) \mapsto e$ and $\mathfrak H : U \to \Sub(G) : u \mapsto G$ belongs to $\mathfrak A(\alpha)$. Thus, $\mathfrak A(\alpha)$ is not the empty set. We note that the set $\mathfrak A(\alpha)$ only depends on the restriction of $\alpha$ to $\id_U \times \mathscr V$.

As explained in \cite{Zi81}, the set $\mathfrak A(\alpha)$ is stable under the following cutting and pasting procedures. To do this, we will exploit the fact that the Borel $1$-cocycle $\alpha$ is defined on $\mathscr U \times \mathscr V = \mathscr W$. Firstly, let $(\varphi, \mathfrak H) \in \mathfrak A(\alpha)$ and $\lambda \in [\mathscr U]$. Define the pair $(\psi, \mathfrak J) = \lambda_\ast(\varphi, \mathfrak H)$ by $\psi : W \to G : (u, v) \mapsto \alpha((u, v), (\lambda u, v))\varphi(\lambda u, v)$ and $\mathfrak J : U \to \Sub(G) : u \mapsto H_{\lambda u}$. Then a straightforward computation shows that $(\psi, \mathfrak J) \in \mathfrak A(\alpha)$. Secondly, let $(\varphi^i, \mathfrak H^i) \in \mathfrak A(\alpha)$ for every $i \in \{1, 2\}$. Let $U = U_1 \sqcup U_2$ be a measurable partition. Define the pair $(\varphi, \mathfrak H)$ by $\varphi|_{U_i \times V} = \varphi^i|_{U_i \times V}$ and $\mathfrak H|_{U_i \times V} = \mathfrak H^i|_{U_i \times V}$ for every $i \in \{1, 2\}$. Then it is plain to see that $(\varphi, \mathfrak H) \in \mathfrak A(\alpha)$. We record the following useful result.

\begin{lem}\label{lem-initial}
Assume that the probability space $(U, \nu)$ is nonatomic. There exists a pair $(\varphi, \mathfrak H) \in \mathfrak A(\alpha)$ such that the $\nu$-almost everywhere defined map $U \mapsto \N : u  \mapsto \dim(\mathbf H_u)$ is $\nu$-almost everywhere constant and for any other pair $(\psi, \mathfrak J) \in \mathfrak A(\alpha)$, we have $\dim(\mathbf J_u) \geq \dim(\mathbf H_u)$ for $\nu$-almost every $u \in U$.
\end{lem}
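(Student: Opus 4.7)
The plan is to adapt Zimmer's classical minimal-dimension trick: take the smallest value of $\dim \mathbf H_u$ that occurs on a set of positive $\nu$-measure for some pair in $\mathfrak A(\alpha)$, realize it on a positive measure set by choosing such a pair, and then spread it to $\nu$-almost every $u \in U$ by combining the ergodicity of $\mathscr U$ with the cutting and pasting operations on $\mathfrak A(\alpha)$ described just above.

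More precisely, assuming that for every pair the function $u \mapsto \dim \mathbf H_u$ is Borel on $U$ (after a canonical choice of $\mathbf H_u$), I would set
$$d_0 = \min\bigl\{\, m \in \{0, 1, \dots, \dim \mathbf G\} : \exists\, (\psi, \mathfrak J) \in \mathfrak A(\alpha),\ \nu(\{u \in U : \dim \mathbf J_u \leq m\}) > 0 \,\bigr\}.$$
The minimum is attained because the set in question is a nonempty subset of a finite set (the trivial pair contributes $\dim \mathbf G$). By the very definition of $d_0$, for every $(\psi, \mathfrak J) \in \mathfrak A(\alpha)$ one automatically has $\dim \mathbf J_u \geq d_0$ for $\nu$-almost every $u \in U$, which already gives half of the conclusion. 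Fix now a pair $(\varphi_0, \mathfrak H_0) \in \mathfrak A(\alpha)$ realizing $d_0$, and set $A_0 = \{u \in U : \dim \mathbf H^0_u = d_0\}$, which has positive measure.

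By ergodicity of $\mathscr U$, the $[\mathscr U]$-saturation of $A_0$ has full $\nu$-measure, so using a countable generating subgroup of $[\mathscr U]$ one can find a countable family $(\lambda_n)_{n \geq 1}$ in $[\mathscr U]$ with $\bigcup_n \lambda_n^{-1}(A_0) = U$ modulo null. Form the Borel partition $U = \bigsqcup_{n \geq 1} U_n$ with $U_n = \lambda_n^{-1}(A_0) \setminus \bigcup_{m < n} \lambda_m^{-1}(A_0)$. The first cutting operation produces pairs $(\varphi_n, \mathfrak H_n) = (\lambda_n)_\ast (\varphi_0, \mathfrak H_0) \in \mathfrak A(\alpha)$, and for $u \in U_n$ one has $\mathbf H^{(n)}_u = \mathbf H^0_{\lambda_n u}$, of dimension $d_0$. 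The pasting operation of the excerpt is stated for two pieces, but extends verbatim to countable Borel partitions because the defining conditions of $\mathfrak A(\alpha)$ are all slicewise and measurable; the resulting pair $(\varphi, \mathfrak H)$ defined by $(\varphi, \mathfrak H)|_{U_n \times V} = (\varphi_n, \mathfrak H_n)|_{U_n \times V}$ then lies in $\mathfrak A(\alpha)$ and satisfies $\dim \mathbf H_u = d_0$ for $\nu$-almost every $u$, yielding the required minimal pair.

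The main technical obstacle I anticipate is the Borel measurability of $u \mapsto \dim \mathbf H_u$: from the Borel map $\mathfrak H : U \to \Sub(G)$ (in the Chabauty structure) one needs to extract, essentially canonically, a Borel assignment $u \mapsto \mathbf H_u$ of $k$-subgroups of $\mathbf G$ with $\mathbf H_u(k) = H_u$ whose dimension function is Borel. This should follow from standard descriptive set-theoretic facts about $\Sub(G)$ together with Borel-measurability of Zariski closures inside $\mathbf G$, but is the only step that is not entirely formal; all other steps are standard applications of ergodicity and the stability of $\mathfrak A(\alpha)$ under cutting and pasting.
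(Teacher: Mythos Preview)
Your proposal is correct and takes essentially the same approach as the paper's proof. The only cosmetic difference is that the paper avoids the countable pasting by first shrinking the good set to a subset $U_1$ of measure exactly $1/n$, partitioning $U$ into $n$ pieces of equal measure, and using ergodicity of $\mathscr U$ to map each piece onto $U_1$ by elements of $[\mathscr U]$, so that only the two-piece pasting already stated is needed; the measurability of $u \mapsto \dim \mathbf H_u$ is taken for granted in the paper as well.
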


\begin{proof}
Consider the nonempty finite subset $\mathscr F \subset \N$ that consists of all integers $j \in \N$ for which there exists $(\psi, \mathfrak J) \in \mathfrak A(\alpha)$ such that $\nu(\{ u \in U \mid \dim(\mathbf J_u) = j\}) > 0$. Set $d = \min(\mathscr F)$ and choose a pair $(\psi, \mathfrak J) \in \mathfrak A(\alpha)$ such that $\nu(\{ u \in U \mid \dim(\mathbf J_u) = d \}) > 0$. Choose $n \geq 1$ and a measurable subset $U_1 \subset \{ u \in U \mid \dim(\mathbf J_u) = d\}$ such that $\nu(U_1) = \frac1n$. Choose measurable subsets $U_2, \dots, U_n \subset X$ so that $(U_i)_{1 \leq i \leq n}$ forms a measurable partition of $X$. Since $\mathscr U$ is ergodic on $(U, \nu)$, for every $1 \leq i \leq n$, we may choose $\lambda_i \in [\mathscr U]$ such that $\lambda_i(U_i) = U_1$. Then for every $1 \leq i \leq n$, we have $(\psi^i, \mathfrak J^i) = {\lambda_i}_\ast(\psi, \mathfrak J) \in \mathfrak A(\alpha)$. Define the pair $(\varphi, \mathfrak H) \in \mathfrak A(\alpha)$ by $\varphi|_{U_i \times V} = \psi^i|_{U_i \times V}$ and $\mathfrak H|_{U_i \times V} = \mathfrak J^i|_{U_i \times V}$ for every $1 \leq i \leq n$. Then we have $d = \dim(\mathbf H_u)$ for $\nu$-almost every $u \in U$.
\end{proof}

We then simply say that $(\varphi, \mathfrak H)  \in \mathfrak A(\alpha)$ is a minimal pair and we write $d = \dim(\mathbf H_u)$ for $\nu$-almost every $u \in U$.

The following theorem is an extension of \cite[Lemma 2.1]{Zi81} to the case of algebraic groups defined over a local field $k$ of characteristic zero. For the sake of completeness, we give a proof following the one of \cite[Lemma 2.1]{Zi81} and we add a few details regarding the measurability of certain maps.

\begin{thm}\label{thm-product-algebraic}
Assume that the probability space $(U, \nu)$ is nonatomic. Keep the same notation as above. The following assertions hold:

\begin{itemize}
\item [$(\rm i)$] Let $(\varphi, \mathfrak H) \in \mathfrak A(\alpha)$ and $(\psi, \mathfrak J) \in \mathfrak A(\alpha)$ be two minimal pairs. Then there exists a measurable map $U \to G : u \mapsto b_u$ such that $\mathbf H_u^0 = b_u \mathbf J_u^0 b_u^{-1}$ for $\nu$-almost every $u \in U$.

\item [$(\rm ii)$] Let $(\varphi, \mathfrak H) \in \mathfrak A(\alpha)$ be a minimal pair. Then there exists a Zariski connected $k$-subgroup $\mathbf H < \mathbf G$ and a measurable map $U \to G : u \mapsto g_u$ such that $\mathbf H_u^0 = g_u \mathbf H g_u^{-1}$ for $\nu$-almost every $u \in U$.

\item [$(\rm iii)$] Let $(\varphi, \mathfrak H) \in \mathfrak A(\alpha)$ and $(\psi, \mathfrak J) \in \mathfrak A(\alpha)$ be two minimal pairs for which there exists a Zariski connected $k$-subgroup $\mathbf H < \mathbf G$ such that $\mathbf H_u^0 = \mathbf H = \mathbf J_u^0$ for $\nu$-almost every $u \in U$. Set $\mathbf L = \mathscr N_{\mathbf G}(\mathbf H)$, $H = \mathbf H(k)$, $L = \mathbf L(k)$ and consider the factor map $p : G \to G/L$. Then we have $p \circ \varphi = p \circ \psi$ $\zeta$-almost everywhere.
\end{itemize}
\end{thm}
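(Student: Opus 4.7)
All three assertions proceed along the lines of Zimmer's argument for \cite[Lemma 2.1]{Zi81}, with additional care required to verify Borel measurability in the variable $u \in U$: unlike in Zimmer's real case, here the subgroups $\mathbf H_u$ depend on $u$ only in a Borel (Chabauty) manner. The common engine is a \emph{joint equivariance} construction that, combined with the ergodicity of $\mathscr V$ on the $\mathscr U$-fibers and the minimality of the pair, reduces the joint target to a single $G$-orbit.

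For (i), for $\nu$-almost every $u \in U$, consider the $(\mathscr V, \alpha_u)$-equivariant joint Borel map
$$\Phi_u : V \to G/H_u \times G/J_u : v \mapsto (\varphi(u,v) H_u, \, \psi(u,v) J_u)$$
for the diagonal $G$-action. Composing with the standard Borel quotient onto $G \backslash (G/H_u \times G/J_u) \simeq H_u \backslash G / J_u$ (see Facts \ref{fact-algebraic}$(\rm i)$), the resulting map is $\mathscr V$-invariant, hence $\eta$-almost everywhere constant by ergodicity. The common value corresponds to a double coset $H_u g_u J_u$, so $\Phi_u$ factors through the orbit embedding
$$G / (\mathbf H_u \cap g_u \mathbf J_u g_u^{-1})(k) \hookrightarrow G/H_u \times G/J_u.$$
A Borel lift yields a new pair $(\tilde \varphi, \tilde{\mathfrak K}) \in \mathfrak A(\alpha)$ with $\mathbf K_u := \mathbf H_u \cap g_u \mathbf J_u g_u^{-1} \leq \mathbf H_u$. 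Minimality of $(\varphi, \mathfrak H)$ and of $(\psi, \mathfrak J)$ forces $\dim(\mathbf K_u) \geq d = \dim(\mathbf H_u) = \dim(\mathbf J_u)$, while $\mathbf K_u \leq \mathbf H_u$ and $\mathbf K_u \leq g_u \mathbf J_u g_u^{-1}$ give $\dim(\mathbf K_u) \leq d$. Hence $\mathbf H_u^0 = \mathbf K_u^0 = g_u \mathbf J_u^0 g_u^{-1}$; take $b_u = g_u$.

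The main technical obstacle is to select $u \mapsto g_u$ in a Borel manner. Using Proposition \ref{prop-borel} together with the Wijsman/Chabauty topology on $\Sub(G)$, the subset
$$B = \{(u,g) \in U \times G : H_u \, \varphi(u,v)^{-1} \psi(u,v) \, J_u = H_u g J_u \text{ for $\eta$-almost every $v \in V$}\}$$
is Borel in $U \times G$ and projects onto a conull subset of $U$; the Jankov--von Neumann selection theorem (Theorem \ref{thm-selection}) then produces the required Borel map $u \mapsto g_u$.

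Deductions of (ii) and (iii) from (i) are then routine. For (ii), apply (i) to the pairs $(\varphi, \mathfrak H)$ and $\lambda_\ast (\varphi, \mathfrak H)$ for every $\lambda \in [\mathscr U]$: the Borel map sending $u$ to the $G$-conjugacy class $[\mathbf H_u^0]$, viewed in the standard Borel space of $G$-conjugacy classes of Zariski-connected $k$-subgroups of $\mathbf G$, is $\mathscr U$-invariant, hence $\nu$-almost everywhere constant by ergodicity; pick a fixed representative $\mathbf H$ and extract the conjugators $g_u$ via Jankov--von Neumann. For (iii), since $\mathbf H = \mathbf H_u^0 \lhd \mathbf H_u$ and $\mathbf H = \mathbf J_u^0 \lhd \mathbf J_u$, we have $\mathbf H_u, \mathbf J_u \leq \mathbf L$, so $H_u, J_u \leq L$; the element $g_u$ produced in (i) satisfies $g_u \mathbf H g_u^{-1} = \mathbf H$, so $g_u \in L$; and the construction in (i) shows $\varphi(u,v)^{-1}\psi(u,v) \in H_u g_u J_u \subset L$ for $\zeta$-almost every $(u,v)$, yielding $p \circ \varphi = p \circ \psi$.
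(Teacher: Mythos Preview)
Your overall strategy matches the paper's proof exactly: for $(\rm i)$ you take the fibered product map into $G/H_u \times G/J_u$, use ergodicity of $\mathscr V$ to land in a single $G$-orbit, and deduce $\mathbf H_u^0 = b_u \mathbf J_u^0 b_u^{-1}$ by minimality; for $(\rm ii)$ you apply $(\rm i)$ to $\lambda_\ast(\varphi,\mathfrak H)$ and use ergodicity of $\mathscr U$; and $(\rm iii)$ follows from the construction in $(\rm i)$ together with $b_u \in L$.

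The places where your plan is thinner than the paper are precisely the two measurability steps, and in each case the paper chooses a specific device that you should be aware of. In $(\rm i)$, rather than selecting only $u \mapsto g_u$ from your set $B \subset U \times G$, the paper works in $U \times G \times \mathscr G$ with $\mathscr G = \rL^0(V,\eta,G)$ and selects the pair $(b_u, F_u)$ simultaneously, where $F_u$ is the Borel lift of $\Phi_u$; the point is that the conditions $F^{-1}\varphi_u \in \mathscr H_u$ and $b^{-1}F^{-1}\psi_u \in \mathscr J_u$ are manifestly closed by \eqref{eq-closed} and Proposition~\ref{prop-borel}, whereas your double-coset condition $H_u g J_u = H_u \varphi(u,v)^{-1}\psi(u,v) J_u$ is awkward to verify as Borel in $(u,g)$ directly, and more importantly you still need the joint measurability of $\tilde\varphi$ in $(u,v)$ to obtain an element of $\mathfrak A(\alpha)$ --- which is exactly what selecting $F_u$ provides. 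In $(\rm ii)$, the paper does not invoke a ``standard Borel space of $G$-conjugacy classes of Zariski connected $k$-subgroups'' (for which smoothness of the orbit space is not obvious); instead it passes to Lie algebras and uses the Grassmannian $\Gr_d(\Lie(G)) = \mathbf M(k)$, which is an algebraic $k$-$\mathbf G$-variety, so that $G \curvearrowright M$ has locally closed orbits and $G\backslash M$ is genuinely standard Borel by Facts~\ref{fact-algebraic}$(\rm i)$. Since $\operatorname{char}(k)=0$, the map $\mathbf H_u^0 \mapsto \Lie(H_u)$ is a bijection and one recovers your conclusion; this is the rigorous substitute for your conjugacy-class space.
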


\begin{proof}
$(\rm i)$ After discarding a $\nu$-null Borel subset, we may assume that the Borel maps $\mathfrak H : U \to \Sub(G) : u \mapsto H_u$ and $\mathfrak J : U \to \Sub(G) : u \mapsto J_u$ satisfy that for every $u \in U$, there exist a $k$-subgroup $\mathbf H_u < \mathbf G$ such that $\mathbf H_u(k) = H_u$ and a $k$-subgroup $\mathbf J_u < \mathbf G$ such that $\mathbf J_u(k) = J_u$, and the Borel maps $V \to G/H_u : v \mapsto \varphi(u, v)H_u$ and $V \to G/J_u : v \mapsto \psi(u, v)J_u$ are $(\mathscr V, \alpha_u)$-equivariant. Denote by $\mathscr G = \rL^0(V, \eta, G)$ the space of all $\eta$-equivalence classes of measurable functions $f : V \to G$. Endowed with the topology of convergence in measure, $\mathscr G$ is a Polish group. 

In this paragraph, we fix $u \in U$. Consider the $k$-$\mathbf G$-variety $\mathbf M_u = \mathbf G/\mathbf H_u \times \mathbf G/\mathbf J_u$, where $\mathbf G \curvearrowright \mathbf M_u$ acts diagonally. Set $M_u = \mathbf M_u(k)$ and regard $N_u = G/H_u \times G/J_u \subset M_u$ as a closed subset. Define the Borel map 
$$\theta_u : V \to N_u : v \mapsto (\varphi(u, v)H_u, \psi(u, v)J_u).$$
Since $\mathbf M_u$ is an algebraic $k$-$\mathbf G$-variety, the continuous action $G \curvearrowright M_u$ has locally closed orbits (see Facts \ref{fact-algebraic}$(\rm i)$). Thus, the action $G \curvearrowright N_u$ has locally closed orbits and so the quotient space $G\backslash N_u$ is a standard Borel space. Since the measurable map 
$$V \to G\backslash N_u : v \mapsto G\cdot \theta_u(v)$$ is $\mathscr V$-invariant and since $\mathscr V$ is ergodic, $V \to G\backslash N_u : v \mapsto G\cdot \theta_u(v)$ is $\eta$-almost everywhere constant. We may choose $b \in G$ such that $\theta_u(v) \in G \cdot (H_u, b J_u)$ for $\eta$-almost every $v \in V$. Letting $L = \Stab_G(H_u, b J_u) < G$, the map 
$$G/L \to G \cdot (H_u, b J_u) : gL \mapsto g(H_u, bJ_u)$$
 is a homeomorphism. By considering a Borel section $G/L \to G$ of the projection map $G \to G/L$, we may then find $F \in \mathscr G$ such that $\theta_u(v) = (F(v)H_u, F(v)b J_u)$ for $\eta$-almost every $v \in V$. Define $\varphi_u : V \to G : v \mapsto \varphi(u, v)$, $\psi_u : V \to G : v \mapsto \psi(u, v)$ and regard $\varphi_u \in \mathscr G$, $\psi_u \in \mathscr G$. 
 
 Then the maps $U \to \mathscr G : u \mapsto \varphi_u$ and $U \to \mathscr G : u \mapsto \psi_u$ are Borel.  For every $u \in U$, set $\mathscr H_u = \rL^0(V, \eta, H_u)$ and $\mathscr J_u = \rL^0(V, \eta, J_u)$. By Proposition \ref{prop-borel}, the maps $U \to \Sub(\mathscr G) : u \mapsto \mathscr H_u$ and $U \to \Sub(\mathscr G) : u \mapsto \mathscr J_u$ are Borel. Denote by $\pi_U : U \times G \times \mathscr G \to U$ the projection map. Consider the subset
$$B = \left\{ (u, b, F) \in U \times G \times \mathscr G \mid F^{-1}\varphi_u \in \mathscr H_u \; \text{ and } \;  b^{-1}F^{-1}\psi_u \in \mathscr J_u \right\}.$$ 
The reasoning in the previous paragraph shows that for every $u \in U$, there exist $F \in \mathscr G$ and $b \in G$ such that $(\varphi(u, v)H_u, \psi(u, v)J_u) = (F(v)H_u, F(v)b J_u)$ for $\eta$-almost every $v \in V$. This means exactly that $(u, b, F) \in B$. Therefore, we have $\pi_U(B) = U$. Using \eqref{eq-closed}, we have that $B \subset U \times G \times \mathscr G$ is a Borel subset. By the measurable selection theorem (see Theorem \ref{thm-selection}), there exists a Borel map $U \to G \times \mathscr G : u \mapsto (b_u, F_u)$ such that $(u, b_u, F_u) \in B$ for $\nu$-almost every $u \in U$. Thus, we may choose Borel maps $\theta : W \to G$ and $\mathfrak L : U \mapsto \Sub(G) : u \mapsto L_u$ such that $\theta(u, v) = F_u(v)$ for $\zeta$-almost every $(u, v) \in W$ and $L_u = H_u \cap b_u J_u b_u^{-1}$ for $\nu$-almost every $u \in U$. Note that for $\nu$-almost every $u \in U$, letting $\mathbf L_u = \mathbf H_u \cap b_u \mathbf J_u b_u^{-1} < \mathbf G$, which is a $k$-subgroup, we have $L_u = \mathbf L_u(k)$. For $\nu$-almost every $u \in U$, using the $G$-equivariant homeomorphism 
$$G/L_u \to G \cdot (H_u, b_u J_u) : g L_u \mapsto (gH_u, g b_u J_u),$$ we may identify 
\begin{align*}
\theta(u, v)L_u &= (\theta(u, v)H_u, \theta(u, v) b_u J_u) \\
&= (F_u(v) H_u, F_u(v) b_u J_u) \\
&= (\varphi(u, v)H_u, \psi(u, v)J_u).
\end{align*}
for $\zeta$-almost every $(u, v) \in W$. Since for $\nu$-almost every $u \in U$, the Borel maps $V \to G/H_u : v \mapsto \varphi(u, v)H_u$ and $V \to G/J_u : v \mapsto \psi(u, v)J_u$ are $(\mathscr V, \alpha_u)$-equivariant, it follows that the Borel map $V \to G/L_u : v \mapsto \theta(u, v)L_u$ is $(\mathscr V, \alpha_u)$-equivariant. Therefore, we infer that $(\theta, \mathfrak L) \in \mathfrak A(\alpha)$. By minimality of the pairs $(\varphi, \mathfrak H) \in \mathfrak A(\alpha)$ and $(\psi, \mathfrak J) \in \mathfrak A(\alpha)$, we have that $\dim(\mathbf H_u) = \dim(\mathbf H_u \cap b_u \mathbf J_u b_u^{-1}) = \dim(\mathbf J_u)$ for $\nu$-almost every $u \in U$. This further implies that $\mathbf H_u^0 = (\mathbf H_u \cap b_u \mathbf J_u b_u^{-1})^0 = b_u \mathbf J_u^0 b_u^{-1}$ for $\nu$-almost every $u \in U$.

$(\rm ii)$ Consider the adjoint $k$-representation $\Ad : \mathbf G \to \GL(\Lie(\mathbf G))$. Denote by $M = \Gr_d(\Lie(G))$ (resp.\! $\mathbf M = \Gr(\Lie(\mathbf G))$) the $k$-analytic Grassmannian $G$-manifold (resp.\! algebraic $k$-$\mathbf G$-variety) of all $d$-dimensional subspaces of $\Lie(G)$ (resp.\! $\Lie(\mathbf G)$). We naturally have $M = \mathbf M(k)$ as $k$-analytic manifolds. The map $U \to M : u \mapsto \Lie(H_u)$ is measurable (see \cite[Section 3]{NZ00}). Since $\mathbf M$ is an algebraic $k$-$\mathbf G$-variety, the continuous action $G \curvearrowright M$ has locally closed orbits and so the quotient space $G\backslash M$ is a standard Borel space (see Facts \ref{fact-algebraic}$(\rm i)$). Consider the measurable map $\beta : U \to G\backslash M : u \mapsto G \cdot \Lie(H_u)$. Let $\lambda \in [\mathscr U]$. Since $\lambda_\ast (\varphi, \mathfrak H) \in \mathfrak A(\alpha)$ is a minimal pair, item $(\rm i)$ implies that there exists a measurable map $U \to G : u \mapsto b_u$ such that $\mathbf H_u^0 = b_u \mathbf H_{\lambda u}^0 b_u^{-1}$ for $\nu$-almost every $u \in U$. Then we have $\beta(\lambda u) = \beta(u)$ for $\nu$-almost every $u \in U$. This implies that the measurable map $\beta : U \to G\backslash M$ is $\mathscr U$-invariant. Since $\mathscr U$ is ergodic, $\beta : U \to G\backslash M$ is $\nu$-almost everywhere constant and so there exists a point $w \in M$ such that $\Lie(H_u) \in G \cdot w$ for $\nu$-almost every $u \in U$. Then there exists a measurable map $U \to G : u \mapsto g_u$ such that $\Lie(H_u) = g_u \cdot w$ for $\nu$-almost every $u \in U$. By Fubini's theorem, there exists $u_0 \in U$ such that letting $\mathbf H = g_{u_0}^{-1} \mathbf H_{u_0}^0 g_{u_0}$, we have $\Lie(\mathbf H_u^0) = \Lie(g_u \mathbf H g_u^{-1})$ for $\nu$-almost every $u \in U$. Since the characteristic of $k$ is zero, it follows that $\mathbf H_u^0 = g_u \mathbf H g_u^{-1}$ for $\nu$-almost every $u \in U$.

$(\rm iii)$ By the proof of item $(\rm i)$, there exist measurable maps $\theta : W \to G$ and $U \to G : u \mapsto b_u$ such that 
$$(\varphi(u, v)H_u, \psi(u,v) J_u) = (\theta(u, v) H_u, \theta(u, v) b_u J_u)$$
for $\zeta$-almost every $(u, v) \in W$. For $\nu$-almost every $u \in U$, since $\dim(\mathbf H_u) = \dim(\mathbf H_u \cap b_u \mathbf J_u b_u^{-1}) = \dim(\mathbf J_u)$ and $\mathbf H_u^0 = \mathbf H = \mathbf J_u^0$, we have $\mathbf H =  \mathbf H_u^0 = b_u \mathbf J_u^0 b_u^{-1} = b_u \mathbf H b_u^{-1}$ and so $b_u \in \mathscr N_{\mathbf G}(\mathbf H) = \mathbf L$. Then for $\zeta$-almost every $(u, v) \in W$, we have
$$\varphi(u, v) L = \theta(u, v) L = \theta(u, v) b_u L = \psi(u, v) L.$$
This shows that $p \circ \varphi = p \circ \psi$ $\zeta$-almost everywhere.
\end{proof}

\subsection{Proof of Theorem \ref{main-theorem}}

Let $k$ be a local field of characteristic zero, $\mathbf G$ a $k$-isotropic almost $k$-simple algebraic $k$-group and $\Gamma < \mathbf G(k)$ a Zariski dense discrete subgroup. Let $\Gamma \curvearrowright (X, \nu)$ be a free ergodic pmp action and set $\mathscr R = \mathscr R(\Gamma \curvearrowright X)$. Upon discarding a null Borel subset, we may assume that the Borel action $\Gamma \curvearrowright X$ is free. Consider the orbit Borel $1$-cocycle $\beta : \mathscr R \to \Gamma : (\gamma x, x) \mapsto \gamma$. Let $\mathscr S$ be an ergodic equivalence relation on $(Y, \eta)$ and consider the Borel $1$-cocycle $\alpha = \beta \circ q : \mathscr R \times \mathscr S \to \Gamma$, where $q : \mathscr R \times \mathscr S \to \mathscr R$ is the canonical factor map. Then $\ker(\alpha) = \id_X \times \mathscr S$. 

For every $i \in \{1, 2\}$, let $\mathscr T_i$ be an ergodic equivalence relation on $(Z_i, \zeta_i)$. Set $\mathscr T = \mathscr T_1 \times \mathscr T_2$ and $(Z, \zeta) = (Z_1 \times Z_2, \zeta_1 \otimes \zeta_2)$. Assume that $\mathscr R \times \mathscr S \cong \mathscr T$. Then there exists a conull Borel subset $Z_0 \subset Z$ such that we may identify $(\mathscr R \times \mathscr S)|_{Z_0}$ with $\mathscr T|_{Z_0}$ everywhere and regard $Z_0 \subset X \times Y$ as a conull Borel subset.

We prove Theorem \ref{main-theorem} by contradiction. Assume that for every $i \in \{1, 2\}$, we have $\mathscr T_i \npreceq _{\mathscr T} \mathscr S$ as subequivalences relations of 
$\mathscr T$.

By \cite[Proposition 9.3.2]{Zi84}, for every $i \in \{1, 2\}$, there exists an amenable ergodic subequivalence relation $\mathscr U_i \leq \mathscr T_i$. Set $\mathscr U = \mathscr U_1 \times \mathscr U_2$ and $\mathscr U_0 = \mathscr U|_{Z_0}$ which is an amenable ergodic subequivalence relation of $\mathscr T|_{Z_0}$. 

\begin{claim}\label{claim1}
There exist a proper $k$-subgroup $\mathbf H < \mathbf G$ and a $(\mathscr U, \alpha|_{\mathscr U})$-equivariant measurable map $f : Z \to G/H$, where we regard $G/H \subset (\mathbf G/\mathbf H)(k)$ with $H = \mathbf H(k)$.
\end{claim}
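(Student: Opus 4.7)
The plan is to apply the dichotomy Theorem~\ref{thm-non-trivial-gate} to the amenable ergodic equivalence relation $\mathscr U$ on $(Z, \zeta)$ and the cocycle $\alpha|_{\mathscr U} : \mathscr U \to G$, whose values lie in the discrete Zariski dense subgroup $\Gamma$. If alternative $(\rm i)$ of that theorem holds, Claim~\ref{claim1} is immediate, so the content of the claim is really to convert alternative $(\rm ii)$ into $(\rm i)$. The extra ingredient that makes this conversion possible is the discreteness of $\Gamma$: together with the compactness of any $L < G$ produced in case $(\rm ii)$, it forces certain stabilizer subgroups to be finite, and hence to have proper Zariski closure.

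More precisely, suppose we are in alternative $(\rm ii)$: there is a compact subgroup $L < G$ together with a $(\mathscr U, \alpha|_{\mathscr U})$-equivariant Borel map $f : Z \to G/L$. Since $\Gamma$ is discrete in $G$ and $L$ is compact, the left translation action $\Gamma \curvearrowright G/L$ is proper, in particular smooth, so the orbit space $\Gamma \backslash G/L$ carries a standard Borel structure. By equivariance of $f$, the composition $Z \to G/L \to \Gamma \backslash G/L$ is $\mathscr U$-invariant, hence $\zeta$-almost everywhere constant by ergodicity of $\mathscr U$, equal to some double coset $\Gamma g L$ with $g \in G$. After discarding a $\zeta$-null Borel set, $f$ thus takes values in the single $\Gamma$-orbit $\Gamma \cdot gL \subset G/L$, which is $\Gamma$-equivariantly identified with $\Gamma/F$ for $F := \Gamma \cap gLg^{-1}$. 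Since $F$ is the intersection of a discrete subgroup with a compact subgroup of $G$, it is finite. The measurable selection theorem (Theorem~\ref{thm-selection}) then lifts $f$ to a $(\mathscr U, \alpha|_{\mathscr U})$-equivariant Borel map $\tilde f : Z \to \Gamma/F$.

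The final step is to upgrade $\Gamma/F$ to $G/H$ for a proper $k$-subgroup $\mathbf H < \mathbf G$. I would take $\mathbf H$ to be the Zariski closure of $F$ in $\mathbf G$: since $F$ is finite, $\mathbf H$ is a $0$-dimensional, hence finite, $k$-subgroup, and in particular a proper $k$-subgroup as $\mathbf G$ is connected of positive dimension. Setting $H = \mathbf H(k)$ one has $F \subset H$, so composing $\tilde f$ with the natural $G$-equivariant projections $\Gamma/F \hookrightarrow G/F \twoheadrightarrow G/H$ produces the required $(\mathscr U, \alpha|_{\mathscr U})$-equivariant measurable map $Z \to G/H \subset (\mathbf G/\mathbf H)(k)$.

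The conceptual point of the argument, and the step that would otherwise be the main obstacle, is that one cannot hope to reduce $(\rm ii)$ to $(\rm i)$ merely by replacing $L$ with its Zariski closure in $\mathbf G$: over a non-archimedean local field $k$, a compact subgroup of $\mathbf G(k)$ may very well be Zariski dense (as with $\mathrm{SL}_n(\mathcal{O}_k) \subset \mathbf{SL}_n(k)$). The discreteness of $\Gamma$ is what bypasses this obstacle, by replacing the compact $L$ with the finite group $F = \Gamma \cap gLg^{-1}$, for which the Zariski closure argument becomes trivial.
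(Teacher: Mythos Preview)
Your argument is correct and takes a genuinely different route from the paper's. The paper proves Claim~\ref{claim1} by contradiction: assuming alternative $(\rm ii)$ of Theorem~\ref{thm-non-trivial-gate} holds, it lifts $f$ to a map $\psi : Z_0 \to G$, restricts to a positive-measure set $V$ on which $\psi$ lands in a compact set $C$, observes that $\alpha$ then takes values in the finite set $CLC^{-1} \cap \Gamma$ on $\mathscr U|_V$, and concludes that $\mathscr U|_V \cap \ker(\alpha)|_V$ has bounded index in $\mathscr U|_V$, whence $\mathscr U_1 \preceq_{\mathscr T} \mathscr S$ and (via Facts~\ref{fact-equivalence}$(\rm v)$) $\mathscr T_1 \preceq_{\mathscr T} \mathscr S$, contradicting the standing hypothesis. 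Your approach instead converts alternative $(\rm ii)$ directly into $(\rm i)$ by using that $\alpha$ is $\Gamma$-valued: the properness of $\Gamma \curvearrowright G/L$ makes $\Gamma \backslash G/L$ standard Borel, ergodicity forces $f$ into a single $\Gamma$-orbit $\Gamma/F$ with $F$ finite, and the (finite) Zariski closure of $F$ furnishes the required proper $k$-subgroup. Your argument is cleaner for Claim~\ref{claim1} itself, proving it unconditionally without ever invoking the contradiction hypothesis $\mathscr T_i \npreceq_{\mathscr T} \mathscr S$; the invocation of Theorem~\ref{thm-selection} is harmless but unnecessary, since for a proper action the orbit map $\Gamma/F \to \Gamma \cdot gL$ is already a homeomorphism. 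The trade-off is that the paper's bounded-index argument is not wasted effort: it is reused essentially verbatim in the proof of Claim~\ref{claim2} to rule out $\mathbf J = \{e\}$, a step where your smoothness-of-$\Gamma \backslash G/L$ trick does not obviously apply because the target there is $G/J_{z_1}$ with $J_{z_1}$ varying finitely rather than a fixed compact $L$.
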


By contradiction, assume that the assertion on Claim \ref{claim1} does not hold. By applying Theorem \ref{thm-non-trivial-gate} to $\alpha|_{\mathscr U_0} : \mathscr U_0 \to G$, there exists a compact subgroup $L < G$ and a $(\mathscr U_0, \alpha|_{\mathscr U_0})$-equivariant measurable map $f : Z_0 \to G/L$. Upon discarding a null Borel subset, we may assume that the map $f : Z_0 \to G/L$ is Borel and strictly $(\mathscr U_0, \alpha|_{\mathscr U_0})$-equivariant. Choose a Borel section $\sigma : G/L \to G$ and define the Borel map $\psi = \sigma \circ f : Z_0 \to G$. Then for every $(z_1, z_2) \in \mathscr U|_{Z_0}$, we have $\psi(z_1) L = \alpha(z_1, z_2) \psi(z_2) L$. Since $G$ is $\sigma$-compact, there exists a compact subset $C \subset G$ such that the Borel subset $V = \psi^{-1}(C) \subset Z_0$ satisfies $\zeta(V) > 0$. Then for every $(z_1, z_2) \in \mathscr U|_V$, we have $\alpha(z_1, z_2) \in \psi(z_1) L \psi(z_2)^{-1} \subset C L C^{-1} \cap \Gamma$. Since $C L C^{-1} \subset G$ is compact and $\Gamma < G$ is discrete, $F = C L C^{-1} \cap \Gamma$ is a finite subset of $\Gamma$. 

Define the subequivalence relation $\mathscr V = \mathscr U|_V \cap \ker(\alpha)|_{V}$. Let $z \in V$. Then we can write $[z]_{\mathscr U|_V} = \bigcup_{\gamma \in F} C_\gamma(z)$, where $C_\gamma(z) = \left \{t \in [z]_{\mathscr U|_V} \mid \alpha(z, t) = \gamma\right \}$ for $\gamma \in F$. Then for every $\gamma \in F$ and all $t_1, t_2 \in C_\gamma(z)$, we have $\alpha(t_1, t_2) = \alpha(t_1, z) \alpha(z, t_2) = \gamma^{-1} \gamma = e$ and so $(t_1, t_2) \in \mathscr V$. Therefore, $\mathscr V \subset \mathscr U|_V$ has bounded index. Since $\mathscr V \leq \ker(\alpha)|_V = (\id_X \times \mathscr S)|_V$, it follows that $\mathscr U \preceq_{\mathscr T} \id_X \times \mathscr S$. Since $\mathscr U_1 \times \id_{Z_2} \leq \mathscr U$, this further implies that $\mathscr U_1 \times \id_{Z_2} \preceq_{\mathscr T} \id_X \times \mathscr S$. Since $\mathscr U_1$ is ergodic, Facts \ref{fact-equivalence}$(\rm v)$ implies that $\mathscr T_1 \times \id_{Z_2} \preceq_{\mathscr T} \id_X \times \mathscr S$. This contradicts our assumption and finishes the proof of Claim \ref{claim1}.

\begin{claim}\label{claim2}
There exist a proper $k$-subgroup $\mathbf L < \mathbf G$ and a $(\mathscr T_1 \times \mathscr T_2, \alpha)$-equivariant measurable map $F : Z \to G/L$, where we regard $G/L \subset (\mathbf G/\mathbf L)(k)$ with $L = \mathbf L(k)$.
\end{claim}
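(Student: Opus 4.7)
The plan is to apply Theorem \ref{thm-product-algebraic} twice, in successive invocations with swapped roles of $Z_1$ and $Z_2$, in order to upgrade the amenable-level equivariance of Claim \ref{claim1} into full $(\mathscr T_1\times\mathscr T_2)$-equivariance.

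\emph{First application.} Thinking of $\mathscr U=\mathscr T_1$ and $\mathscr V=\mathscr U_2$, I first view Claim \ref{claim1} as providing a non-trivial element $(\varphi_0,\mathfrak H_0)\in\mathfrak A(\alpha|_{\mathscr T_1\times\mathscr U_2})$: namely, choose a Borel section $s:G/H\to G$ of the projection $G\to G/H$, set $\varphi_0=s\circ f$ and let $\mathfrak H_0$ be the constant assignment $z_1\mapsto H$; the $(\mathscr U_1\times\mathscr U_2)$-equivariance of $f$ specialises on each fiber $\{z_1\}\times Z_2$ to the required $(\mathscr U_2,\alpha_{z_1})$-equivariance of $z_2\mapsto\varphi_0(z_1,z_2)H$. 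Lemma \ref{lem-initial} then produces a minimal pair $(\varphi,\mathfrak H)\in\mathfrak A(\alpha|_{\mathscr T_1\times\mathscr U_2})$ of constant dimension $d:=\dim(\mathbf H_u)\leq\dim(\mathbf H)<\dim(\mathbf G)$. I rule out the degenerate case $d=0$ by adapting the finite-stabilizer argument from Claim \ref{claim1}: if $\mathbf H_u$ were finite a.e., then after restricting $\varphi$ to the preimage of a well chosen compact subset of $G$ and to a positive-measure set on which $z_1\mapsto\mathbf H_{z_1}$ takes values in a fixed conjugacy class of finite $k$-subgroups, the discreteness of $\Gamma<G$ would force $\alpha$ to lie in a finite subset of $\Gamma$ on a bounded-index sub-relation of $\id_{Z_1}\times\mathscr U_2$, whence $\mathscr T_2\preceq_{\mathscr T}\mathscr S$ by Facts \ref{fact-equivalence}$(\rm v)$ applied with $\mathscr U_2\leq\mathscr T_2$, a contradiction. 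Thus $d>0$, and by Theorem \ref{thm-product-algebraic}$(\rm ii)$ there is a Zariski connected $k$-subgroup $\mathbf H^*<\mathbf G$ of dimension $d$ and a measurable map $z_1\mapsto g_{z_1}$ with $\mathbf H_{z_1}^0=g_{z_1}\mathbf H^* g_{z_1}^{-1}$ a.e.\! Since $0<d<\dim(\mathbf G)$ and $\mathbf G$ is almost $k$-simple, $\mathbf H^*$ is not normal, and therefore $\mathbf L:=\mathscr N_{\mathbf G}(\mathbf H^*)$ is a proper $k$-subgroup. Conjugating $(\varphi,\mathfrak H)$ by $g_{z_1}$ so that the identity components become the constant $\mathbf H^*$ and applying Theorem \ref{thm-product-algebraic}$(\rm iii)$ to this pair and to its $\lambda_\ast$-translate for arbitrary $\lambda\in[\mathscr T_1]$, I conclude that the map $F:=p\circ\varphi:Z\to G/L$, where $p:G\to G/L$ and $L=\mathbf L(k)$, is $(\mathscr T_1\times\mathscr U_2,\alpha)$-equivariant.

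\emph{Second application.} Now I swap the two factors: take $\mathscr U=\mathscr T_2$ and $\mathscr V=\mathscr T_1$. The map $F$ produced above, together with the constant assignment $z_2\mapsto L$, yields a non-trivial pair in $\mathfrak A(\alpha|_{\mathscr T_2\times\mathscr T_1})$—the $(\mathscr T_1\times\id)$-part of the equivariance of $F$ is exactly the statement that, for a.e.\! $z_2$, the map $z_1\mapsto F(z_1,z_2)L$ is $(\mathscr T_1,\alpha_{z_2})$-equivariant into $G/L$. Lemma \ref{lem-initial} then produces a minimal pair $(\psi,\mathfrak J)$ of constant dimension $d'\leq\dim(\mathbf L)<\dim(\mathbf G)$. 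The degenerate case $d'=0$ is ruled out by the symmetric version of the first-application contradiction argument (this time giving $\mathscr T_1\preceq_{\mathscr T}\mathscr S$, directly and without needing to enlarge a sub-relation, since the fiber already carries the ergodic relation $\mathscr T_1$). With $0<d'<\dim(\mathbf G)$, Theorem \ref{thm-product-algebraic}$(\rm ii)$ and $(\rm iii)$ applied with $\lambda\in[\mathscr T_2]$ produce a Zariski connected $k$-subgroup $\mathbf J^*<\mathbf G$ of dimension $d'$, a proper $k$-subgroup $\mathbf L':=\mathscr N_{\mathbf G}(\mathbf J^*)$ (again by almost $k$-simplicity of $\mathbf G$), and a measurable map $F':Z\to G/L'$ with $L'=\mathbf L'(k)$ that is $(\mathscr T_2\times\mathscr T_1,\alpha)$-equivariant, which is exactly the statement of Claim \ref{claim2}.

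\emph{Main obstacle.} The technical heart of the proof lies in the two degenerate cases $d=0$ and $d'=0$: in these cases $\mathscr N_{\mathbf G}(\mathbf H_u^0)=\mathbf G$ is not proper and Theorem \ref{thm-product-algebraic}$(\rm iii)$ alone cannot conclude. Handling them requires a careful adaptation of the finite-stabilizer argument embedded in the proof of Claim \ref{claim1}: one restricts to a positive-measure subset on which the Borel assignment of finite $k$-subgroups is contained in a fixed compact conjugacy class, then exploits $\sigma$-compactness of $G$ and discreteness of $\Gamma$ to localize the cocycle $\alpha$ inside a finite subset of $\Gamma$ on a bounded-index sub-relation, and finally invokes Facts \ref{fact-equivalence}$(\rm v)$ to force $\mathscr T_i\preceq_{\mathscr T}\mathscr S$ for the appropriate $i\in\{1,2\}$, contradicting the standing assumption.
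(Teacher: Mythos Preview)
Your proposal is correct and follows essentially the same two-step strategy as the paper: first upgrade the $(\mathscr U_1\times\mathscr U_2)$-equivariant map of Claim~\ref{claim1} to a $(\mathscr T_1\times\mathscr U_2)$-equivariant one via Lemma~\ref{lem-initial} and Theorem~\ref{thm-product-algebraic}, then repeat with the roles of the two factors swapped to reach full $(\mathscr T_1\times\mathscr T_2)$-equivariance; in each step the degenerate case $d=0$ is excluded by the compactness/discreteness argument of Claim~\ref{claim1}, yielding the forbidden embedding $\mathscr T_i\preceq_{\mathscr T}\mathscr S$.

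One point deserves sharpening. In the degenerate case you propose to restrict to a positive-measure set on which the finite groups $\mathbf H_{z_1}$ lie in ``a fixed conjugacy class of finite $k$-subgroups.'' This is stronger than needed and not obviously available: what the argument actually requires is that, after conjugation by some measurable $z_1\mapsto b_{z_1}$, all the $H_{z_1}$ sit inside a \emph{single fixed compact subgroup} $K<G$. The paper secures this cleanly by invoking the finiteness of conjugacy classes of \emph{maximal compact} subgroups of $G$ (Bruhat--Tits), so that on a positive-measure set each finite $H_{z_1}$ is conjugate into one fixed maximal compact $K$; a measurable selection (Theorem~\ref{thm-selection}) then produces the conjugating map. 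With $K$ in hand, the rest of your sketch---restrict $\varphi$ to the preimage of a compact $C\subset G$, observe $\alpha\in CKC^{-1}\cap\Gamma$ is finite, deduce bounded index inside $\ker(\alpha)$, and apply Facts~\ref{fact-equivalence}$(\rm v)$---matches the paper exactly.
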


The proof of Claim \ref{claim2} consists of two steps. Firstly, we consider the product equivalence $\mathscr T_1 \times \mathscr U_2$ together with the subequivalence relation $\mathscr U_1 \times \mathscr U_2 \leq \mathscr T_1 \times \mathscr U_2$ and the $(\mathscr U_1 \times \mathscr U_2, \alpha|_{\mathscr U_1 \times \mathscr U_2})$-equivariant measurable map $f : Z \to G/H$. By Theorem \ref{thm-product-algebraic}$(\rm ii)$, we may choose a minimal pair $(\psi, \mathfrak J) \in \mathfrak A(\alpha|_{\mathscr T_1 \times \mathscr U_2})$ for which there exists a Zariski connected $k$-subgroup $\mathbf J < \mathbf G$ such that $\mathbf J = \mathbf J_{z_1}^0$ for $\zeta_1$-almost every $z_1 \in Z_1$. By Claim $\ref{claim1}$, there exist a proper $k$-subgroup $\mathbf H < \mathbf G$ and a $(\mathscr U, \alpha|_{\mathscr U})$-equivariant measurable map $f : Z \to G/H$. Choose a Borel section $\sigma : G/H \to G$. Consider the measurable map $\varphi = \sigma \circ f : Z \to G$ and the constant map $\mathfrak H : Z_1 \to \Sub(G) : z_1 \mapsto H$. Then for $\zeta_1$-almost every $z_1 \in Z_1$, the Borel map $Z_2 \to G/H : z_2 \mapsto \varphi(z_1, z_2)H = f(z_1, z_2)$ is $(\mathscr U_2, \alpha_{z_1})$-equivariant and so we may regard $(\varphi, \mathfrak H) \in  \mathfrak A(\alpha|_{\mathscr T_1 \times \mathscr U_2})$. By minimality of $(\psi, \mathfrak J) \in \mathfrak A(\alpha|_{\mathscr T_1 \times \mathscr U_2})$ and since $\mathbf H < \mathbf G$ is a proper $k$-subgroup, it follows that $\mathbf J < \mathbf G$ is a proper $k$-subgroup. 

Next, we claim that $\mathbf J \neq \{e\}$. Upon discarding a null Borel subset, we may assume that the map $\mathfrak J : Z_1 \to \Sub(G) : z_1 \mapsto J_{z_1}$ is Borel and for every $z_1 \in Z_1$, there exists an algebraic $k$-subgroup $\mathbf J_{z_1} < \mathbf G$ such that $\mathbf J_{z_1}(k) = J_{z_1}$ and $\mathbf J_{z_1}^0 = \mathbf J$. By contradiction, assume that $\mathbf J = \{e\}$. Then we have that $J_{z_1} < G$ is a finite subgroup for every $z_1 \in Z_1$. It is well-known that there are only finitely many conjugacy classes of maximal compact subgroups of $G$ (see e.g.\! \cite[Corollaire 3.3.3]{BT71}). Therefore, there exists a nonnull Borel subset $W \subset Z_1$ and a maximal compact subgroup $K < G$ such that the subset 
$$B = \left\{ (z_1, b) \in W \times G \mid b J_{z_1}b^{-1} < K \right\} \subset W \times G$$
satisfies $\pi_W(B) = W$. Using \eqref{eq-closed-bis}, we have that $B \subset W \times G$ is a Borel subset. By the measurable selection theorem (see Theorem \ref{thm-selection}), there exists a Borel map $W \to G : z_{1} \mapsto b_{z_1}$ such that $b_{z_1} J_{z_1} b_{z_1}^{-1} < K$ for $\zeta_1$-almost every $z_1 \in W$. Upon discarding a null Borel subset, we may assume that the Borel map $W \to G : z_{1} \mapsto b_{z_1}$ satisfies $b_{z_1} J_{z_1} b_{z_1}^{-1} < K$ for every $z_1 \in W$. We may extend the Borel map $Z_1 \to G : z_{1} \mapsto b_{z_1}$ by declaring that $b_{z_1} = e$ for every $z_1 \in Z_1 \setminus W$. Upon replacing the pair $(\psi, \mathfrak J) \in \mathfrak A(\alpha|_{\mathscr T_1 \times \mathscr U_2})$ by the pair $(\rho, \mathfrak K) \in \mathfrak A(\alpha|_{\mathscr T_1 \times \mathscr U_2})$, where $\rho : Z \to G : (z_1, z_2) \mapsto \psi(z_1, z_2)b_{z_1}^{-1}$ and $\mathfrak K : Z_1 \to \Sub(G) : z_1 \mapsto b_{z_1} J_{z_1} b_{z_1}^{-1}$, we may assume that $J_{z_1} < K$ for every $z_1 \in W$. 

Choose a conull Borel subset $V \subset (W \times Z_2) \cap Z_0$ such that the restricted Borel map $\psi|_{V} : V \to G$ satisfies 
$$\forall ((z_1, s), (z_1, t)) \in (\id_{Z_1} \times \mathscr U_2)|_V, \quad \psi(z_1, s)J_{z_1} = \alpha((z_1, s), (z_1, t)) \psi(z_1, t) J_{z_1}.$$ 
Since $G$ is $\sigma$-compact, there exists a compact subset $C \subset G$ such that $V_0 = (\psi|_{V})^{-1}(C) \subset V$ satisfies $\zeta(V_0) > 0$.  Then for every $((z_1, s), (z_1, t)) \in (\id_{Z_1} \times \mathscr U_2)|_{V_0}$, we have 
$$\alpha((z_1, s), (z_1, t)) \in \psi(z_1, s)J_{z_1}\psi(z_1, t)^{-1}  \in C J_{z_1} C^{-1} \cap \Gamma \subset C K C^{-1} \cap \Gamma.$$ Since $C K C^{-1} \subset G$ is compact and $\Gamma < G$ is discrete, $F = C K C^{-1} \cap \Gamma$ is a finite subset of $\Gamma$. Now a reasoning entirely analogous to the one as in the second paragraph of the proof of Claim \ref{claim1} shows that $\id_{Z_1} \times \mathscr U_2 \preceq_{\mathscr T} \id_X \times \mathscr S$. This contradicts our assumption and so $\mathbf J \neq \{e\}$.

Since $\mathbf G$ is almost $k$-simple and since the Zariski connected $k$-subgroup $\mathbf J$ is different from $\{e\}$ and $\mathbf G$, it follows that the $k$-subgroup $\mathbf L = \mathscr N_{\mathbf G}(\mathbf J) < \mathbf G$ is a proper subgroup. Set $L = \mathbf L(k)$. Choose a countable subgroup $\Lambda < [\mathscr T_1]$ such that $\mathscr T_1 = \mathscr R(\Lambda \curvearrowright Z_1)$. For every $\lambda \in \Lambda$, the pair $\lambda_\ast(\psi, \mathfrak J) \in \mathfrak A(\alpha|_{\mathscr T_1 \times \mathscr U_2})$ is minimal and for $\zeta_1$-almost every $z_1 \in Z_1$, we have $(\mathbf J_{\lambda z_1})^0 = \mathbf J$. Then Theorem \ref{thm-product-algebraic}$(\rm iii)$ implies that for $\zeta$-almost every $(z_1, s) \in Z_1 \times Z_2$, we have
$\psi(z_1, s) L = \alpha((z_1, s), (\lambda z_1, s))\psi(\lambda z_1, s) L$. This further implies that for almost every $(z_1, s_1) \in \mathscr T_1$ and almost every $(z_2, s_2) \in \mathscr U_2$, we have
\begin{align*}
\psi(z_1, s_1) L &= \alpha((z_1, s_1), (z_2, s_1))\psi(z_2, s_1) L \\
&=  \alpha((z_1, s_1), (z_2, s_1)) \alpha((z_2, s_1), (z_2, s_2)) \psi(z_2, s_2) L  \\
&=  \alpha((z_1, s_1), (z_2, s_2))\psi(z_2, s_2) L,
\end{align*}
where in the second line we used the fact that $(\psi, \mathfrak J) \in \mathfrak A(\alpha|_{\mathscr T_1 \times \mathscr U_2})$. It follows that the measurable map $Z \to G/L : z \mapsto \psi(z)L$ is $(\mathscr T_1 \times \mathscr U_2, \alpha|_{\mathscr T_1 \times \mathscr U_2})$-equivariant.

Now, we can consider the product equivalence relation $\mathscr T_1 \times \mathscr T_2$ together with the subequivalence relation $\mathscr T_1 \times \mathscr U_2 \leq \mathscr T_1 \times \mathscr T_2$ and the $(\mathscr T_1 \times \mathscr U_2)$-equivariant measurable map $Z \to G/L : z \mapsto \psi(z)L$. Repeating the same argument as in the first four paragraphs of the proof of Claim \ref{claim2} and since $\mathscr T_1 \times \id_{Z_2} \npreceq_{\mathscr T} \id_X \times \mathscr S$, upon modifying the proper $k$-subgroup $\mathbf L < \mathbf G$, we obtain the desired conclusion. This finishes the proof of Claim \ref{claim2}.

Consider now the pmp action $\Gamma \curvearrowright (Z, \zeta)$ defined by $\gamma z = (\gamma x, y)$ for every $z = (x, y) \in Z$. Then for every $\gamma \in \Gamma$ and almost every $z = (x, y) \in Z$, we have
$$F(\gamma z) = F(\gamma x, y) = \alpha((\gamma x, y), (x,y)) F(x, y) = \gamma F(z).$$
Then the measurable map $F : Z \to G/L$ is $\Gamma$-equivariant. Consider the Borel probability measure $\mu = F_\ast\zeta \in \Prob(G/L)^\Gamma$. Since $\Gamma < G$ is Zariski dense in $\mathbf G$, \cite[Proposition 1.9]{BDL14} implies that there exists a normal $k$-subgroup $\mathbf N \lhd \mathbf G$ such that the image of $\Gamma$ in $(\mathbf G/\mathbf N)(k)$ is precompact and $\mu$ is supported on $(\mathbf G/\mathbf L)^{\mathbf N} \cap G/L$. Since $\mathbf L < \mathbf G$ is a proper $k$-subgroup and since $(\mathbf G/\mathbf L)^{\mathbf N} \neq \emptyset$, we have $\mathbf N \neq \mathbf G$. Since $\mathbf G$ is almost $k$-simple, it follows that $\mathbf N \lhd \mathbf G$ is a finite normal $k$-subgroup. Letting $N = \mathbf N(k)$, the image of $\Gamma$ in $(\mathbf G/\mathbf N)(k)$ is contained in $G/N$ which is closed in $(\mathbf G/\mathbf N)(k)$. Then the image of $\Gamma$ in $G/N$ is precompact. Since $N < G$ is finite, this further implies that $\Gamma < G$ is compact, which is a contradiction. Therefore, we have showed that the orbit equivalence relation $\mathscr R(\Gamma \curvearrowright X)$ is strongly prime. This concludes the proof of Theorem \ref{main-theorem}.

The following variation of Theorem \ref{main-theorem} will be useful in the proof of Theorem \ref{main-theorem-decomposition}.

\begin{thm}\label{thm-analogy}
Let $k$ be a local field of characteristic zero, $\mathbf G$ a $k$-isotropic almost $k$-simple algebraic $k$-group and $\Gamma < \mathbf G(k)$ a Zariski dense discrete subgroup. Let $\Lambda$ be a countable discrete group and $\Gamma \times \Lambda \curvearrowright (X, \nu)$ an essentially free ergodic pmp action. Set $\mathscr R = \mathscr R(\Gamma \times \Lambda \curvearrowright X)$.

For every $i \in \{1, 2\}$, let $\mathscr T_i$ be an ergodic equivalence relation on $(Z_i, \zeta_i)$ and assume that $\mathscr R \cong \mathscr T_1 \times \mathscr T_2$. Then there exists $i \in \{1, 2\}$ such that $\mathscr T_i \preceq_{\mathscr R} \mathscr R(\Lambda \curvearrowright X)$ as subequivalence relations.
\end{thm}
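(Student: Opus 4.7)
The plan is to mimic the proof of Theorem \ref{main-theorem} with the cocycle $\alpha : \mathscr R \to \Gamma$ defined as the $\Gamma$-component of the orbit cocycle of the essentially free action $\Gamma \times \Lambda \curvearrowright X$; this cocycle satisfies $\ker(\alpha) = \mathscr R(\Lambda \curvearrowright X)$ and, crucially, $\alpha((\gamma x, x)) = \gamma$ for every $\gamma \in \Gamma$ and $\nu$-almost every $x \in X$. I would identify $X$ with (a conull Borel subset of) $Z_1 \times Z_2$ via the isomorphism $\mathscr R \cong \mathscr T_1 \times \mathscr T_2$ and argue by contradiction, assuming both $\mathscr T_1 \times \id_{Z_2} \npreceq_{\mathscr R} \mathscr R(\Lambda \curvearrowright X)$ and $\id_{Z_1} \times \mathscr T_2 \npreceq_{\mathscr R} \mathscr R(\Lambda \curvearrowright X)$.

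Applying Lemma \ref{lem-key-intertwining} twice, I choose hyperfinite (hence amenable) ergodic subequivalence relations $\mathscr U_1 \leq \mathscr T_1$ and $\mathscr U_2 \leq \mathscr T_2$ such that $\mathscr U_1 \times \id_{Z_2} \npreceq_{\mathscr R} \mathscr R(\Lambda \curvearrowright X)$ and $\id_{Z_1} \times \mathscr U_2 \npreceq_{\mathscr R} \mathscr R(\Lambda \curvearrowright X)$. With $\mathscr U = \mathscr U_1 \times \mathscr U_2$ amenable ergodic, I apply Theorem \ref{thm-non-trivial-gate} to $\alpha|_{\mathscr U} : \mathscr U \to G$: the compact-stabilizer alternative would, via the maximal-compact-subgroup argument of Claim \ref{claim1}, yield $\mathscr U \preceq_{\mathscr R} \mathscr R(\Lambda \curvearrowright X)$ and hence $\mathscr U_1 \times \id_{Z_2} \preceq_{\mathscr R} \mathscr R(\Lambda \curvearrowright X)$ by Facts \ref{fact-equivalence}$(\rm ii)$, contradicting the choice of $\mathscr U_1$. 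So I obtain a proper $k$-subgroup $\mathbf H < \mathbf G$ and a $(\mathscr U, \alpha|_{\mathscr U})$-equivariant measurable map $f : X \to G/H$.

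Next, I mirror Claim \ref{claim2}: Theorem \ref{thm-product-algebraic}$(\rm ii)$ applied to $\alpha|_{\mathscr T_1 \times \mathscr U_2}$ produces a minimal pair $(\psi, \mathfrak J)$ with a Zariski connected $k$-subgroup $\mathbf J < \mathbf G$; the existence of $f$ forces $\mathbf J$ to be proper, while the case $\mathbf J = \{e\}$ is excluded by the maximal-compact-subgroup trick (which would yield $\id_{Z_1} \times \mathscr U_2 \preceq_{\mathscr R} \mathscr R(\Lambda \curvearrowright X)$). Theorem \ref{thm-product-algebraic}$(\rm iii)$ together with $\mathscr N_{\mathbf G}(\mathbf J)$ then upgrades $\psi$ to a $(\mathscr T_1 \times \mathscr U_2, \alpha)$-equivariant measurable map into $G/L_0$ for some proper $k$-subgroup $\mathbf L_0 < \mathbf G$. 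A second iteration of the same scheme in the ambient $\mathscr T_1 \times \mathscr T_2$, using the upgraded map as the new initial equivariant datum and appealing to the assumption $\id_{Z_1} \times \mathscr T_2 \npreceq_{\mathscr R} \mathscr R(\Lambda \curvearrowright X)$ to rule out finite stabilizers, yields a proper $k$-subgroup $\mathbf L < \mathbf G$ and a $(\mathscr R, \alpha)$-equivariant measurable map $F : X \to G/L$.

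The identity $\alpha((\gamma x, x)) = \gamma$ turns $(\mathscr R, \alpha)$-equivariance into $\Gamma$-equivariance of $F$ under the original $\Gamma$-action on $X$, so $F_\ast \nu \in \Prob(G/L)^{\Gamma}$; as in the last paragraph of the proof of Theorem \ref{main-theorem}, \cite[Proposition 1.9]{BDL14} combined with $\mathbf G$ being almost $k$-simple forces $\Gamma < G$ to be compact, which is absurd since $\Gamma$ is discrete and infinite (infinite because Zariski dense in the infinite group $\mathbf G$). The main obstacle compared with Theorem \ref{main-theorem} is that $\mathscr R(\Lambda \curvearrowright X)$ does not arise as the $\id_X \times \mathscr S$ side of any product decomposition of $\mathscr R$, so the standard bootstrap from $\mathscr U_i$-level non-intertwining to $\mathscr T_i$-level non-intertwining via Facts \ref{fact-equivalence}$(\rm v)$ is unavailable; this is precisely what Lemma \ref{lem-key-intertwining} is designed to remedy, by allowing me to build the non-intertwining property into the choice of $\mathscr U_i$ from the start.
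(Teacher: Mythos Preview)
Your proposal is correct and follows essentially the same approach as the paper's proof: define the cocycle $\alpha$ with $\ker(\alpha) = \mathscr R(\Lambda \curvearrowright X)$, invoke Lemma \ref{lem-key-intertwining} to select amenable ergodic $\mathscr U_i \leq \mathscr T_i$ with the non-intertwining property built in (precisely because Facts \ref{fact-equivalence}$(\rm v)$ is unavailable here), and then rerun Claims \ref{claim1} and \ref{claim2} verbatim. One small slip: in the second iteration of Claim \ref{claim2} the inner (equivariant) direction is $\mathscr T_1$, so ruling out finite stabilizers uses $\mathscr T_1 \times \id_{Z_2} \npreceq_{\mathscr R} \mathscr R(\Lambda \curvearrowright X)$ rather than the $\mathscr T_2$-assumption you cite, but this does not affect the argument.
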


\begin{proof}
The proof is identical to the proof of Theorem \ref{main-theorem}, the only difference is that the pmp action $\Gamma \times \Lambda \curvearrowright (X, \nu)$ is no longer assumed to be a product action. To circumvent this issue, we exploit Lemma \ref{lem-key-intertwining}. Upon discarding a null Borel subset, we may assume that the Borel action $\Gamma \times \Lambda \curvearrowright X$ is free. Consider the Borel $1$-cocycle $\alpha : \mathscr R \to \Gamma : ((\gamma, \lambda) x, x) \mapsto \gamma$. Then $\ker(\alpha) = \mathscr R(\Lambda \curvearrowright X)$.

By contradiction, assume that for every $i \in \{1, 2\}$, we have $\mathscr T_i \npreceq_{\mathscr R} \mathscr R(\Lambda \curvearrowright X)$ as subequivalence relations. By Lemma \ref{lem-key-intertwining}, for every $i \in \{1, 2\}$, there exists an amenable ergodic subequivalence relation $\mathscr U_i \leq \mathscr T_i$ such that $\mathscr U_i \npreceq_{\mathscr R} \mathscr R(\Lambda \curvearrowright X)$. Then we can literally repeat the exact same proof as in Claims \ref{claim1} and \ref{claim2} to deduce a contradiction.
\end{proof}

We use Theorem \ref{thm-analogy} in combination with Connes--Jones'\! construction \cite{CJ81} to provide examples of type ${\rm II_1}$ ergodic equivalence relations $\mathscr R$ that are prime but for which the associated type ${\rm II_1}$ factor $\rL(\mathscr R)$ is not prime.

\begin{prop}\label{prop-counterexample}
Let $\Gamma = \SL_3(\Z)$ and $H$ be a nonabelian finite discrete group. Set $\Lambda = \bigoplus_\N H$, $(Y, \eta) = (H^\N, \Haar)$ and $(X, \nu) = (Y^\Gamma, \eta^{\otimes \Gamma})$. Consider the free ergodic pmp action $\Gamma \times \Lambda \curvearrowright (X, \nu)$, where $\Lambda \curvearrowright (X, \nu)$ acts diagonally and $\Gamma \curvearrowright (X, \nu)$ acts by Bernoulli shifts. Set $\mathscr R = \mathscr R(\Gamma \times \Lambda \curvearrowright X)$.

Then $\mathscr R$ is prime but $\rL(\mathscr R)$ is not prime. In fact, $\rL(\mathscr R)$ is a McDuff factor meaning that $\rL(\mathscr R) \cong \rL(\mathscr R) \ovt R$, where $R$ is the unique hyperfinite type ${\rm II_1}$ factor.
\end{prop}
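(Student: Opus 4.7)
The plan is to treat the two halves of the proposition separately: the McDuff assertion via the Connes--Jones construction and the primeness assertion via Theorem \ref{thm-analogy}.

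For the McDuff statement $\rL(\mathscr R) \cong \rL(\mathscr R) \ovt R$, I would follow \cite{CJ81}. The nonabelianness of $H$ is what makes the construction work: the left and right translation actions of the group $Y = H^\N$ on itself commute, and both extend diagonally to $X = Y^\Gamma$ and commute with the Bernoulli $\Gamma$-shift. Writing $\Lambda = \bigcup_n \Lambda_{\geq n}$ with $\Lambda_{\geq n} = \bigoplus_{k \geq n} H$, the unitaries of $\rL(\mathscr R)$ corresponding to the right diagonal action of $\Lambda_{\geq n}$ form, as $n \to \infty$, an approximately central sequence in $\rL(\mathscr R)$ generating a copy of the hyperfinite factor $R$. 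The standard Connes--Jones argument then produces the desired isomorphism, so $\rL(\mathscr R)$ is McDuff (in particular, not prime).

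For the primeness of $\mathscr R$, I would argue by contradiction. Assume $\mathscr R \cong \mathscr T_1 \times \mathscr T_2$ with both $\mathscr T_i$ of type $\mathrm{II}_1$. Since $\Gamma = \SL_3(\Z) < \mathbf{SL}_3(\R)$ is Zariski dense in the $\R$-isotropic $\R$-simple algebraic group $\mathbf{SL}_3$, Theorem \ref{thm-analogy} applies and yields some $i \in \{1, 2\}$ with $\mathscr T_i \preceq_{\mathscr R} \mathscr R(\Lambda \curvearrowright X)$. By Facts \ref{fact-equivalence}$(\rm i)$, this gives $\rL(\mathscr T_i) \preceq_{\rL(\mathscr R)} \rL(\mathscr R(\Lambda \curvearrowright X))$; since $\Lambda$ is amenable the target von Neumann algebra is hyperfinite, forcing the $\mathrm{II}_1$ factor $\rL(\mathscr T_i)$ to be hyperfinite, hence $\mathscr T_i \cong R_0$.

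The main obstacle is then to rule out the remaining possibility that $\mathscr R \cong R_0 \times \mathscr T_{3-i}$ with $\mathscr T_{3-i}$ of type $\mathrm{II}_1$. My plan is to apply Theorem \ref{thm-analogy} once more to this new decomposition. One case yields that the embedded factor is $\mathscr T_{3-i}$, and the same analysis then forces $\mathscr T_{3-i} \cong R_0$ and hence $\mathscr R \cong R_0 \times R_0 \cong R_0$, contradicting the non-hyperfiniteness of the subequivalence relation $\mathscr R(\Gamma \curvearrowright X) \leq \mathscr R$ (since $\Gamma$ is not amenable). The other case, in which Theorem \ref{thm-analogy} only reconfirms that the $R_0$-factor embeds in $\mathscr R(\Lambda \curvearrowright X)$, is the delicate step; to handle it I would combine Facts \ref{fact-equivalence}$(\rm iv),(\rm v)$ with the strong primeness of $\mathscr R(\Gamma \curvearrowright X)$ furnished by Theorem \ref{main-theorem} and with the rigidity of the Bernoulli $\Gamma$-action (via Popa's cocycle superrigidity) to transfer the analysis to the Bernoulli subequivalence relation and extract the needed contradiction.
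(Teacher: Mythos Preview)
Your treatment of the McDuff half and the first invocation of Theorem \ref{thm-analogy} are fine and match the paper. The gap is in how you finish the primeness argument.

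Once you know that some $\mathscr T_i$ is hyperfinite, the paper disposes of it in one line: $\Gamma = \SL_3(\Z)$ has Kazhdan's property (T) and $\Gamma \curvearrowright (X,\nu)$ is ergodic, so $\mathscr R$ is strongly ergodic; but a product $\mathscr T_i \times \mathscr T_{3-i}$ with $\mathscr T_i$ hyperfinite of type ${\rm II_1}$ is never strongly ergodic (the hyperfinite ${\rm II_1}$ relation has nontrivial asymptotically invariant sequences of sets). Hence $\mathscr T_i$ must have finite orbits, and $\mathscr R$ is prime. You never mention strong ergodicity, and that is precisely the missing idea.

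Your proposed workaround --- a second application of Theorem \ref{thm-analogy} followed by a case split --- does not close the gap. In the ``delicate'' branch where the theorem only returns $R_0 \preceq_{\mathscr R} \mathscr R(\Lambda \curvearrowright X)$, you appeal to the strong primeness of $\mathscr R(\Gamma \curvearrowright X)$ from Theorem \ref{main-theorem} and to Popa's cocycle superrigidity, but neither tool is set up to apply here: $\mathscr R$ is \emph{not} of the form $\mathscr R(\Gamma \curvearrowright X) \times \mathscr S$ (the $\Gamma \times \Lambda$-action is not a product action), so strong primeness of the $\Gamma$-subrelation gives you nothing directly, and it is unclear what cocycle superrigidity would be applied to or what it would yield. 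As written, that branch is a hand-wave rather than an argument. The fix is simply to invoke strong ergodicity as above.
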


\begin{proof}
By Connes--Jones'\! result \cite{CJ81}, $\rL(\mathscr R)$ is a McDuff factor. Since $\Gamma$ has Kazhdan's property (T) and since $\Gamma \curvearrowright (X, \nu)$ is ergodic, it follows that $\mathscr R$ is strongly ergodic. Assume that $\mathscr R \cong \mathscr T_1 \times \mathscr T_2$, where $\mathscr T_1$, $\mathscr T_2$ are ergodic equivalence relations. By Theorem \ref{thm-analogy}, there exists $i \in \{1, 2\}$ such that $\mathscr T_i \preceq_{\mathscr R} \mathscr R(\Lambda \curvearrowright X)$ as subequivalence relations. Since $\mathscr R(\Lambda \curvearrowright X)$ is amenable, it follows that $\mathscr T_i$ is amenable and so $\mathscr T_i$ is hyperfinite by \cite{CFW81}. Since $\mathscr R$ is strongly ergodic, it follows that $\mathscr T_i$ is finite. Therefore, $\mathscr R$ is prime.
\end{proof}

\subsection{Proof of Corollary \ref{main-cor}}

Let $n \geq 1$ and $\mathscr R_1, \dots, \mathscr R_n$ be ergodic equivalence relations that belong to $\mathfrak Z$. For every $i \in \{1, \dots, n\}$, write $\mathscr R_i = \mathscr R(\Gamma_i \curvearrowright X_i)$.

$(\rm i)$ Assume that $\mathscr R = \mathscr R_1 \times \cdots \times \mathscr R_n \cong \mathscr T_1 \times  \mathscr T_2$, where $ \mathscr T_1$ and $\mathscr T_2$ are type ${\rm II_1}$ ergodic equivalence relations acting on $(Y_1, \eta_1)$ and $(Y_2, \eta_2)$ respectively. For every $j \in \{1, 2\}$, denote by $T_j \subset \{1, \dots, n\}$ a minimal subset for which $\mathscr T_j \preceq_{\mathscr R} \mathscr R_{T_j}$ as subequivalence relations, where $\mathscr R_{T_j} = \prod_{i \in T_j} \mathscr R_i$. Since $\mathscr T_j$ is a type ${\rm II_1}$ ergodic equivalence relation, we have $T_j \neq \emptyset$. Set $T = T_1 \cup T_2$ and $\mathscr R_T = \prod_{i \in T} \mathscr R_i$. 

Firstly, we claim that $T = \{1, \dots, n\}$. Indeed, we have $\mathscr T_1 \preceq_{\mathscr R} \mathscr R_T$ and $\mathscr T_2 \preceq_{\mathscr R} \mathscr R_T$ as subequivalence relations. By Facts \ref{fact-equivalence}$(\rm i)$, we obtain that $\rL(\mathscr T_1) \ovt \rL^\infty(Y_2) \preceq_{\rL(\mathscr R)} \rL(\mathscr R_T) \ovt \rL^\infty(X_{T^c})$ and $\rL^\infty(Y_1) \ovt \rL(\mathscr T_2) \preceq_{\rL(\mathscr R)} \rL(\mathscr R_T) \ovt \rL^\infty(X_{T^c})$. By \cite[Lemma 2.4(3)]{DHI16}, we moreover have $\rL(\mathscr T_1) \ovt \rL^\infty(Y_2) \preceq^s_{\rL(\mathscr R)} \rL(\mathscr R_T) \ovt \rL^\infty(X_{T^c})$ and $\rL^\infty(Y_1) \ovt \rL(\mathscr T_2) \preceq^s_{\rL(\mathscr R)} \rL(\mathscr R_T) \ovt \rL^\infty(X_{T^c})$. Then \cite[Lemma 2.3]{BV12} implies that $\rL(\mathscr R) \preceq_{\rL(\mathscr R)} \rL(\mathscr R_T) \ovt \rL^\infty(X_{T^c})$. Since the group $\Gamma_i$ is infinite and the pmp action $\Gamma_i \curvearrowright (X_i, \nu_i)$ is essentially free ergodic for every $i \in \{1, \dots, n\}$, \cite[Theorem 4.4]{HI17} implies that we necessarily have $T = \{1, \dots, n\}$.

Secondly, we claim that $T_1 \cap T_2 = \emptyset$. By contradiction, assume that $T_1 \cap T_2 \neq \emptyset$ and let $i \in T_1 \cap T_2$. Write $\mathscr R = \mathscr R_i \times \mathscr S$, where $\mathscr S = \mathscr R_{\{1, \dots, n\} \setminus \{i\}}$. By Theorem \ref{main-theorem}, we have $\mathscr T_1 \preceq_{\mathscr R} \mathscr S$ or $\mathscr T_2 \preceq_{\mathscr R} \mathscr S$ as subequivalence relations. Assume that $\mathscr T_1 \preceq_{\mathscr R} \mathscr S$ as subequivalence relations. The reasoning in the previous paragraph shows that $\rL(\mathscr T_1) \ovt \rL^\infty(Y_2) \preceq^s_{\rL(\mathscr R)} \rL(\mathscr S) \ovt \rL^\infty(X_i)$ and $\rL(\mathscr T_1) \ovt \rL^\infty(Y_2) \preceq^s_{\rL(\mathscr R)} \rL(\mathscr R_{T_1}) \ovt \rL^\infty(X_{T_1^c})$. Then \cite[Lemma 2.8(2)]{DHI16} implies that $\rL(\mathscr T_1) \ovt \rL^\infty(Y_2) \preceq^s_{\rL(\mathscr R)} \rL(\mathscr R_{T_1 \setminus \{i\}}) \ovt \rL^\infty(X_{(T_1 \setminus \{i\})^c })$. Then Fact \ref{fact-equivalence}$(\rm i)$ implies that $\mathscr T_1 \preceq_{\mathscr R} \mathscr R_{T_1 \setminus \{i\}}$ as subequivalence relations. This however contradicts the minimality of the set $T_1$. Likewise, $\mathscr T_2 \preceq_{\mathscr R} \mathscr S$ contradicts the minimality of the set $T_2$. Therefore, we have $T_1 \cap T_2 = \emptyset$.

Therefore, we may write $\{1, \dots, n\} = T_1 \sqcup T_2$. We have $\mathscr R = \mathscr R_{T_1} \times \mathscr R_{T_2} \cong \mathscr T_1 \times \mathscr T_2$. Moreover, we have $\mathscr T_1 \preceq_{\mathscr R} \mathscr R_{T_1}$ and $\mathscr T_2 \preceq_{\mathscr R} \mathscr R_{T_2}$ as subequivalence relations.

$(\rm ii)$ We prove the assertion by complete induction over $n \geq 1$. Theorem \ref{main-theorem} implies that the assertion holds for $n = 1$. Assume that the assertion holds for every $1 \leq j \leq n - 1$ and let us prove that it holds for $n$. 

Firstly, assume that $\mathscr R = \mathscr R_1 \times \cdots \times \mathscr R_{n} \cong \mathscr T_1 \times \cdots \times \mathscr T_p$, where $\mathscr T_1, \dots, \mathscr T_p$ are type ${\rm II_1}$ ergodic equivalence relations. We may assume that $p \geq n \geq 2$. Then item $(\rm i)$ implies that there exists a partition $\{1, \dots, n\} = T_1 \sqcup T_2$ into nonempty subsets such that $\mathscr T_1 \preceq_{\mathscr R} \mathscr R_{T_1}$ and $\mathscr T_2 \times \cdots \times \mathscr T_p \preceq_{\mathscr R} \mathscr R_{T_2}$ as subequivalence relations. By Facts \ref{fact-equivalence}$(\rm vi)$, there exists an ergodic equivalence relation $\mathscr V$ such that $\mathscr T_2 \times \cdots \times \mathscr T_p \times \mathscr V$ and $\mathscr R_{T_2}$ are stably isomorphic. Upon replacing $\mathscr T_2$ by $\mathscr T_2^t$ for $t > 0$, we may assume that $\mathscr T_2 \times \cdots \times \mathscr T_{p} \times \mathscr V \cong \mathscr R_{T_2}$. By induction hypothesis and since $p - 1 \geq |T_2|$ and $|T_2| \leq n- 1$, we have that $\mathscr V$ is finite, $p - 1 = |T_2| \leq n - 1$ and so $p = n$.

Secondly, assume that $\mathscr R = \mathscr R_1 \times \cdots \times \mathscr R_{n} \cong \mathscr T_1 \times \cdots \times \mathscr T_{n}$. Then item $(\rm i)$ implies that there exists a partition $\{1, \dots, n\} = T_1 \sqcup T_2$ into nonempty subsets such that $\mathscr T_1 \preceq_{\mathscr R} \mathscr R_{T_1}$ and $\mathscr T_2 \times \cdots \times \mathscr T_n \preceq_{\mathscr R} \mathscr R_{T_2}$ as subequivalence relations. By Facts \ref{fact-equivalence}$(\rm vi)$, there exists an ergodic equivalence relation $\mathscr U$ such that $\mathscr T_1 \times \mathscr U$ and $\mathscr R_{T_1}$ are stably isomorphic and there exists an ergodic equivalence relation $\mathscr V$ such that $\mathscr T_2 \times \cdots \times \mathscr T_n \times \mathscr V$ and $\mathscr R_{T_2}$ are stably isomorphic. Upon replacing $\mathscr T_2$ by $\mathscr T_2^t$ for $t > 0$, we may assume that $\mathscr T_2 \times \cdots \times \mathscr T_{n} \times \mathscr V \cong \mathscr R_{T_2}$. Since $|T_2| \leq n - 1$, the previous paragraph implies that $\mathscr V$ is finite, $|T_2| = n - 1$ and so $|T_1| = 1$. Upon replacing $\mathscr T_n$ by $\mathscr T_n \times \mathscr V$, we may assume that $\mathscr V$ is trivial. Upon permuting the indices, we may assume that $T_1 = \{1\}$ and so $\mathscr T_1 \times \mathscr U$ and $\mathscr R_1$ are stably isomorphic. Since $\mathscr R_1$ is strongly prime, we have that $\mathscr U$ is finite and so $\mathscr T_1$ and $\mathscr R_1$ are stably isomorphic. Moreover, we have $\mathscr R_2 \times \cdots \times \mathscr R_{n} \cong \mathscr T_2 \times \cdots \times \mathscr T_{n}$. By induction hypothesis and upon permuting the indices, we have that $\mathscr T_i$ and $\mathscr R_i$ are stably isomorphic for every $i \in \{2, \dots, n\}$. This concludes the proof of Corollary \ref{main-cor}.

\subsection{Proof of Corollary \ref{main-cor-sharp}}

Let $n \geq 1$ and $\mathscr R_1, \dots, \mathscr R_n$ be strongly ergodic equivalence relations that belong to $\mathfrak Z$. For every $i \in \{1, \dots, n\}$, write $\mathscr R_i = \mathscr R(\Gamma_i \curvearrowright X_i)$. For every $i \in \{1, \dots, n\}$, since $\Gamma_i$ is not inner amenable by Proposition \ref{prop-not-inner-amenable}, the type ${\rm II_1}$ factor $\rL(\mathscr R_i)$ is necessarily full by \cite{Ch81}.

$(\rm i)$ Assume that $\mathscr R = \mathscr R_1 \times \cdots \times \mathscr R_n \cong \mathscr T_1 \times \mathscr T_2$, where $\mathscr T_1$ and $\mathscr T_2$ are type ${\rm II_1}$ ergodic equivalence relations acting on $(Y_1, \eta_1)$ and $(Y_2, \eta_2)$ respectively. We may literally repeat the proof of Corollary \ref{main-cor}$(\rm i)$. There exists a partition $\{1, \dots, n\} = T_1 \sqcup T_2$ into nonempty subsets such that $\rL(\mathscr T_1) \ovt \rL^\infty(Y_2) \preceq^s_{\rL(\mathscr R)} \rL(\mathscr R_{T_1}) \ovt \rL^\infty(X_{T_2})$ and $\rL^\infty(Y_1) \ovt \rL(\mathscr T_2) \preceq^s_{\rL(\mathscr R)} \rL^\infty(X_{T_1}) \ovt \rL(\mathscr R_{T_2})$. A combination of \cite[Lemma 2.6(3)]{DHI16} and \cite[Proposition 2.4(3)]{OP07} implies that $\rL(\mathscr T_j)$ is amenable relative to $\rL(\mathscr R_{T_j})$ inside $\rL(\mathscr R)$ for every $j \in \{1, 2\}$. Note that $\rL(\mathscr R)$ is full by \cite[Corollary 2.3]{Co75} and so are $\rL(\mathscr T_1)$, $\rL(\mathscr T_2)$, $\rL(\mathscr R_{T_1})$, $\rL(\mathscr R_{T_2})$. Then \cite[Lemma 5.2]{IM19} implies that $\rL(\mathscr T_j) \preceq_{\rL(\mathscr R)} \rL(\mathscr R_{T_j})$ for every $j \in \{1, 2\}$. Reasoning as in the proof of \cite[Corollary E(1)]{Ho15}, we infer that there exists $t > 0$ such that $\mathscr T_1^t \cong \mathscr R_{T_1}$ and $\mathscr T_2^{1/t} \cong \mathscr R_{T_2}$.

$(\rm ii)$ The proof follows by combining the ones of item $(\rm i)$, Corollary \ref{main-cor}$(\rm ii)$ and \cite[Corollary E]{Ho15}. This concludes the proof of Corollary \ref{main-cor-sharp}.

\subsection{Proof of Theorem \ref{main-theorem-decomposition}}

We prove the following key intermediate result which is analogous to \cite[Proposition 4.1]{Dr19}.

\begin{prop}\label{prop-key}
Let $n \geq 1$. For every $1 \leq i \leq n$, let $k_i$ be a local field of characteristic zero, $\mathbf G_i$ a $k_i$-isotropic $k_i$-simple algebraic $k_i$-group and $\Gamma_i < \mathbf G(k)$ a Zariski dense discrete subgroup with Kazhdan's property {\em (T)}. Set $\Gamma = \Gamma_1 \times \cdots \times \Gamma_n$. Let $\Gamma \curvearrowright (X, \nu)$ be an essentially free ergodic pmp action and set $\mathscr R = \mathscr R(\Gamma \curvearrowright X)$. Assume that $\mathscr R \cong \mathscr T_1 \times \mathscr T_2$, where $\mathscr T_1, \mathscr T_2$ are type ${\rm II_1}$ ergodic equivalence relations. Set $M = \rL(\mathscr R)$ and for every $j \in \{1, 2\}$, set $P_j = \rL(\mathscr T_j)$.

Then there exist a partition $\{1, \dots, n\} = T_1 \sqcup T_2$ into nonempty subsets such that $P_j \preceq^s_M \rL(\Gamma_{T_j} \curvearrowright X)$ for every $j \in \{1, 2\}$.
\end{prop}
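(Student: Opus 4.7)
The plan is to reduce to $n$ separate applications of Theorem \ref{thm-analogy}, one for each factor $\Gamma_i$, and then combine the resulting intertwinings via an intersection argument. For each $i \in \{1,\dots,n\}$, regard the given action as an essentially free ergodic pmp action of the product $\Gamma_i \times \Gamma_{\hat i}$, where $\Gamma_{\hat i} := \prod_{\ell \neq i} \Gamma_\ell$. Since $\Gamma_i$ is a Zariski dense discrete subgroup of the $k_i$-isotropic $k_i$-simple algebraic group $\mathbf G_i(k_i)$, Theorem \ref{thm-analogy} yields an index $j_i \in \{1,2\}$ with $\mathscr T_{j_i} \preceq_{\mathscr R} \mathscr R(\Gamma_{\hat i} \curvearrowright X)$ as subequivalence relations of $\mathscr R$. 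Writing $(X,\nu) \cong (Y_1 \times Y_2, \eta_1 \otimes \eta_2)$ for the product decomposition induced by $\mathscr R \cong \mathscr T_1 \times \mathscr T_2$ and setting $A = \rL^\infty(X)$, Fact \ref{fact-equivalence}$(\rm i)$ translates this into
$$P_{j_i} \ovt \rL^\infty(Y_{3-j_i}) \preceq_M A \rtimes \Gamma_{\hat i},$$
which \cite[Lemma 2.4(3)]{DHI16} upgrades to the strong form $\preceq^s_M$.

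Define $T_j := \{i : j_i = j\}$, breaking ties arbitrarily so that $\{1,\dots,n\} = T_1 \sqcup T_2$. The desired target is $A \rtimes \Gamma_{T_j} = \rL(\Gamma_{T_j} \curvearrowright X)$, and the key group-theoretic identity is $\bigcap_{i \in T_{3-j}} \Gamma_{\hat i} = \Gamma_{T_j}$. Iterating an intersection-type intertwining lemma for commuting-normalized inclusions (in the spirit of \cite[Lemma 2.8]{DHI16}, as deployed in \cite[Proposition 4.1]{Dr19}), one aggregates the strong embeddings from the previous step to conclude $P_j \ovt \rL^\infty(Y_{3-j}) \preceq^s_M A \rtimes \Gamma_{T_j}$, and then \cite[Lemma 3.5]{Va07} peels off the tensor factor $\rL^\infty(Y_{3-j})$ to yield $P_j \preceq^s_M A \rtimes \Gamma_{T_j}$. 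Non-emptiness of both $T_1$ and $T_2$ falls out of the same argument: if, say, $T_2 = \emptyset$, iterating the intersection over all $i \in \{1,\dots,n\}$ would force $P_1 \ovt \rL^\infty(Y_2) \preceq^s_M \bigcap_{i = 1}^n (A \rtimes \Gamma_{\hat i}) = A$, which is impossible because $P_1$ is a type ${\rm II_1}$ factor and cannot embed into an abelian algebra.

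The main obstacle is justifying the iterated intersection step, since in general $P \preceq^s Q_1$ and $P \preceq^s Q_2$ do not imply $P \preceq^s Q_1 \cap Q_2$. Two structural features must be brought to bear: first, each subalgebra $A \rtimes \Gamma_{\hat i}$ is normalized by the commuting factor $\Gamma_i \subset \Gamma$, so the relevant Ioana--Popa--Vaes intersection lemmas apply to the pairs $A \rtimes \Gamma_{\hat i}$, $A \rtimes \Gamma_{\hat \ell}$; second, Kazhdan's property (T) of each $\Gamma_i$ supplies the rigidity needed to iterate through all $n$ factors without loss of strength. These are precisely the ingredients developed in \cite[Proposition 4.1]{Dr19} for products of hyperbolic property (T) groups, and I expect the same scheme to transfer to the present algebraic framework once Theorem \ref{thm-analogy} is used in place of the hyperbolic input.
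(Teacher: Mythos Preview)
Your approach is essentially correct and is a genuine alternative to the paper's, but it contains an indexing slip and a misconception about where property (T) enters.

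\textbf{The indexing.} With your definition $T_j = \{i : j_i = j\}$, the intertwinings you actually possess are $P_j \ovt \rL^\infty(Y_{3-j}) \preceq^s_M A \rtimes \Gamma_{\hat i}$ for each $i \in T_j$. Intersecting over $i \in T_j$ (not over $i \in T_{3-j}$, where you only have information about $P_{3-j}$) yields
\[
P_j \ovt \rL^\infty(Y_{3-j}) \preceq^s_M A \rtimes \bigcap_{i \in T_j}\Gamma_{\hat i} = A \rtimes \Gamma_{T_{3-j}}.
\]
So the conclusion is $P_j \preceq^s_M \rL(\Gamma_{T_{3-j}} \curvearrowright X)$; after relabelling $S_j := T_{3-j}$ you get the proposition. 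Your non-emptiness argument then goes through as written.

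\textbf{Property (T).} It plays no role in the intersection step, and the paper's proof of this proposition does not use it. In the crossed product $M = A \rtimes \Gamma$, the subalgebras $A \rtimes \Gamma_{\hat i}$ satisfy the commuting-square identity $E_{A \rtimes \Gamma_{\hat i}} \circ E_{A \rtimes \Gamma_{\hat \ell}} = E_{A \rtimes (\Gamma_{\hat i} \cap \Gamma_{\hat \ell})}$, so \cite[Lemma 2.8(2)]{DHI16} applies directly and can be iterated with no rigidity input. Property (T) only enters later, in the proof of Theorem \ref{main-theorem-decomposition}, via \cite[Theorem 3.2]{Dr19}, to upgrade the intertwining to an actual conjugacy of actions.

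\textbf{Comparison with the paper.} The paper argues top-down: it takes $T_j$ to be a \emph{minimal} subset with $P_j \preceq^s_M \rL(\Gamma_{T_j} \curvearrowright X)$, shows $T_1 \cup T_2 = \{1,\dots,n\}$ using \cite[Lemma 2.3]{BV12}, and shows $T_1 \cap T_2 = \emptyset$ by contradiction --- if some $i$ lay in both, minimality together with \cite[Lemma 2.8(2)]{DHI16} would give $\mathscr T_j \npreceq_{\mathscr R} \mathscr R(\Gamma_{\hat i} \curvearrowright X)$ for both $j$, contradicting Theorem \ref{thm-analogy}. Your bottom-up route runs the same two ingredients in the opposite order (apply Theorem \ref{thm-analogy} $n$ times, then intersect). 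Both are valid; the paper's minimality trick is slightly slicker since it avoids iterating the intersection lemma, while yours is more constructive.
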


\begin{proof}
For every $j \in \{1, 2\}$, denote by $T_j \subset \{1, \dots, n\}$ a minimal subset for which $P_j \preceq^s_{M} M_{T_j}$, where $M_{T_j} = \rL(\Gamma_{T_j} \curvearrowright X)$ and $\Gamma_{T_j} = \prod_{i \in T_j} \Gamma_i$. Since $P_j$ is a type ${\rm II_1}$ factor, we have $T_j \neq \emptyset$. Set $T = T_1 \cup T_2$, $\Gamma_T = \prod_{i \in T} \Gamma_i$ and $M_T =\rL(\Gamma_T \curvearrowright X)$. Arguing as in the proof of Corollary \ref{main-cor}, we deduce that $T = \{1, \dots, n\}$.

It remains to prove that $T_1 \cap T_2 = \emptyset$. By contradiction, assume that $T_1 \cap T_2 \neq \emptyset$ and let $i \in T_1 \cap T_2$. Set $\Lambda = \prod_{j \neq i} \Gamma_j$ and $\mathscr S = \mathscr R(\Lambda \curvearrowright X)$. Arguing as in the proof of Corollary \ref{main-cor}, by minimality of $T_1$ and $T_2$ and \cite[Lemma 2.8(2)]{DHI16}, we have that $P_j \npreceq_M^s \rL(\Lambda \curvearrowright X)$ for every $j \in \{1, 2\}$. By Fact \ref{fact-equivalence}$(\rm i)$, this further implies that $\mathscr T_j \npreceq_{\mathscr R} \mathscr S$ as subequivalence relations. This however contradicts Theorem \ref{thm-analogy}. Therefore, we have $\{1, \dots, n\} = T_1 \sqcup T_2$.
 \end{proof}

\begin{proof}[Proof of Theorem \ref{main-theorem-decomposition}]
By combining Proposition \ref{prop-key} and \cite[Theorem 3.2]{Dr19}, we infer that there exist a partition $\{1, \dots, n\} = T_1 \sqcup T_2$ into nonempty subsets, $t > 0$, an essentially free ergodic pmp action $\Gamma_{T_j} \curvearrowright (X_j, \nu_j)$, where $\Gamma_{T_j} = \prod_{i \in T_j} \Gamma_i$ and $\rL^\infty(X_j, \nu_j) = \rL^\infty(X, \nu)^{\Gamma_{T_{j + 1}}}$ for every $j \in \Z/2\Z$, a decomposition $\rL(\mathscr R) = \rL(\mathscr T_1)^t  \ovt \rL(\mathscr T_2)^{1/t}$ and $u \in \mathscr U(\rL(\mathscr R))$, such that $\Gamma \curvearrowright (X, \nu)$ is isomorphic to $\Gamma_{T_1} \times \Gamma_{T_2} \curvearrowright (X_1 \times X_2, \nu_1 \otimes \nu_2)$ and
$$\rL(\mathscr T_1)^t = u\rL(\Gamma_{T_1} \curvearrowright X_1)u^* \quad \text{and} \quad \rL(\mathscr T_2)^{1/t} = u \rL(\Gamma_{T_2} \curvearrowright X_2)u^*.$$
Arguing as in the proof of \cite[Corollary E]{Ho15}, we infer that 
$$\mathscr T_1^t \cong \mathscr R(\Gamma_{T_1} \curvearrowright X_1) \quad \text{and} \quad \mathscr T_2^{1/t} \cong \mathscr R(\Gamma_{T_2} \curvearrowright X_2).$$
This finishes the proof of Theorem \ref{main-theorem-decomposition}.
\end{proof}

\subsection{Proof of Corollary \ref{main-cor-decomposition}}
By applying Theorem \ref{main-theorem-decomposition} finitely many times, we can find a partition $\{1, \dots, n\} = T_1 \sqcup \cdots \sqcup T_r$ into nonempty subsets, an essentially free ergodic pmp action $\Gamma_{T_j} \curvearrowright (X_j, \nu_j)$, where $\Gamma_{T_j} = \prod_{i \in T_j} \Gamma_i$ for every $j \in \{1, \dots, r\}$, such that $\Gamma \curvearrowright (X, \nu)$ is isomorphic to $\Gamma_{T_1} \times \cdots \times \Gamma_{T_r} \curvearrowright (X_1 \times \cdots \times X_r, \nu_1 \otimes \cdots \otimes \nu_r)$ and $\mathscr R(\Gamma_{T_j} \curvearrowright X_j)$ is prime for every $j \in \{1, \dots, r\}$.

To prove the uniqueness part, we consider another partition $\{1, \dots, n\} = S_1 \sqcup \cdots \sqcup S_p$ into nonempty subsets, an essentially free ergodic pmp action $\Gamma_{S_j} \curvearrowright (Y_j, \nu_j)$, where $\Gamma_{S_j} = \prod_{i \in S_j} \Gamma_i$ for every $j \in \{1, \dots, p\}$ such that $\Gamma \curvearrowright (X, \nu)$ is isomorphic to $\Gamma_{S_1} \times \cdots \times \Gamma_{S_p} \curvearrowright (Y_1 \times \cdots \times Y_p, \eta_1 \otimes \cdots \otimes \eta_p)$ and $\mathscr R(\Gamma_{S_j} \curvearrowright Y_j)$ is prime for every $j \in \{1, \dots, p\}$. Without loss of generality, we may assume that $r\ge p$. We claim that for any $1\leq j\leq r$, there is $1\leq \ell \leq p$ such that $T_j\subset S_\ell$. To see this, note that for any $1\leq j\leq r$, there is $1\leq \ell \leq p$ such that $S_\ell\cap T_j\neq\emptyset$. Note that the action $\Gamma_{T_j}\curvearrowright (X,\nu)$ is isomorphic to $\Gamma_{S_1\cap T_j} \times \cdots \times \Gamma_{S_p\cap T_j} \curvearrowright (Y_1 \times \cdots \times Y_p, \eta_1 \otimes \cdots \otimes \eta_p)$. If $T_j\not\subset S_\ell$, it is easy to see that $\mathscr R (\Gamma_{T_j}\curvearrowright X_j)$ is not prime, contradiction.

The above claim implies that $r=p$, and hence, $T_j=S_j$, for any $1\leq j\leq r$. Then the conclusion follows.

\bibliographystyle{plain}

\end{document}